\let\oldmarginpar\marginpar
\renewcommand\marginpar[1]
\newcommand{\la}{\langle}
\newcommand{\ra}{\rangle}
\newtheorem{theorem}{\bf Theorem}[section]
\newtheorem{lemma}[theorem]{\bf Lemma}
\newtheorem{remark}[theorem]{\bf Remark}
\newtheorem{corollary}[theorem]{\bf Corollary}
\newtheorem{example}[theorem]{\bf Example}
\newtheorem{question}[theorem]{\bf Question}
\newcommand{\CC}{{\Bbb C}}
\newcommand{\RP}{{\Bbb RP}}
\newcommand{\NN}{{\Bbb N}}
\newcommand{\QQ}{{\Bbb Q}}
\newcommand{\RR}{{\Bbb R}}
\newcommand{\ZZ}{{\Bbb Z}}
\newcommand{\ggreat}{>\kern-.7ex>}
\newcommand{\ssmall}{<\kern-.7ex<}
\newcommand{\qu}{/\kern-.7ex/}
\newcommand{\exh}{\to\kern-1.8ex\to}
\newcommand{\nN}{{\EuScript{N}}}
\newcommand{\pP}{{\EuScript{P}}}
\newcommand{\sS}{{\EuScript{S}}}
\newcommand{\GL}{\operatorname{GL}}
\newcommand{\ab}{\operatorname{ab}}
\newcommand{\Aut}{\operatorname{Aut}}
\newcommand{\Diff}{\operatorname{Diff}}
\newcommand{\End}{\operatorname{End}}
\newcommand{\Ham}{\operatorname{Ham}}
\newcommand{\Homeo}{\operatorname{Homeo}}
\newcommand{\Id}{\operatorname{Id}}
\newcommand{\Isom}{\operatorname{Isom}}
\newcommand{\Ker}{\operatorname{Ker}}
\newcommand{\Mor}{\operatorname{Mor}}
\newcommand{\Perm}{\operatorname{Perm}}
\newcommand{\SO}{\operatorname{SO}}
\newcommand{\SU}{\operatorname{SU}}
\newcommand{\Stab}{\operatorname{Stab}}
\newcommand{\Symp}{\operatorname{Symp}}
\newcommand{\wt}{\widetilde}
\newcommand{\ov}{\overline}
\newcommand{\discsym}{\operatorname{disc-sym}}
\title[Actions of large finite groups on manifolds]
{Actions of large finite groups on manifolds}
\author{Ignasi Mundet i Riera}
\address{Facultat de Matem\`atiques i Inform\`atica\\
Universitat de Barcelona\\
Gran Via de les Corts Catalanes 585\\
08007 Barcelona \\
Spain}
\email{ignasi.mundet@ub.edu}
\date{November 30, 2023}
\subjclass[2010]{57S17,54H15}
\begin{document}

\maketitle


\begin{flushright}
{\it Dedicated with gratitude to Oscar Garc\'{\i}a--Prada, \\ on the occasion of his 60th birthday}
\end{flushright}

\begin{abstract}
In this paper we survey some recent results on
actions of finite groups on topological manifolds.
Given an action of a finite group $G$ on a manifold $X$,
these results provide information
on the restriction of the action to a
subgroup of $G$ of index bounded above by a number depending only on $X$.
Some of these results refer to the
algebraic structure of the group, such as being abelian, or nilpotent, or admitting
a generating subset of controlled size; other results refer to the geometry of the action,
e.g. to the existence of fixed points, to the collection of stabilizer subgroups, or to the
action on cohomology.
\end{abstract}

\section{Introduction}

\subsection{Some questions on actions of finite groups}
Let us begin by recalling the most basic definitions of finite transformation groups.
Standard references in the field are \cite{AP,Bo,Bredon,Hsiang,tD}.

Let $X$ be a topological space and let $\Homeo(X)$ denote the group of self homeomorphisms of $X$.
A continuous action of a finite group $G$ on $X$ is a group homomorphism $\rho:G\to \Homeo(X)$.
This is usually described in terms of the map $G\times X\to X$ sending $(g,x)$ to $g\cdot x:=\rho(g)(x)$. An action $\rho:G\to\Homeo(X)$ is said to be effective if $\rho$ is injective.
Given an action of $G$ on $X$, the stabilizer of a point $x\in X$ is $G_x:=\{g\in G\mid g\cdot x=x\}$.
The action is said to be free if $G_x=\{1\}$ for every $x\in X$. A point $x\in X$ is said to be fixed
if $G_x=G$. The set of fixed points of the action of $G$ on $X$ is denoted by $X^G$.
For each $g\in G$ we denote $X^g=\{x\in X\mid g\cdot x=x\}$.

In this survey we only consider actions of finite groups on topological
manifolds. Given a topological manifold $X$,
many questions come naturally to mind regarding
the actions of finite groups on $X$.
Which finite groups act effectively on $X$? In particular, how much does the assumption
that a finite group $G$ acts effectively on $X$ prescribe the algebraic structure of $G$?
Are there natural constraints on $X$ or on the action of $G$ that force
$G$ to be abelian or nilpotent?
Is there a bound on the minimal size of a generating subset of $G$?
Does $G$ admit a free action on $X$? Does every action have
a fixed point? What are the possible collections of stabilizers of an action of $G$ on $X$?

Answering most of the previous questions in all generality, for an arbitrary $X$,
is probably out of reach with the currently available tools.
The questions become more accessible if one restricts to particular examples of manifolds.
Hence, many results in the literature aim to understand finite group actions on restricted collections
of manifolds such as spheres (see e.g. \cite{MWD,Hambleton,Sch}),
Euclidean spaces, homogeneous spaces, low dimensional manifolds
(see e.g. \cite{Edmonds1985,Edmonds2018}), or products
of them. Another possibility is to focus on very particular
examples of finite groups, such as finite cyclic groups or finite $p$-groups.
Smith theory, for example, applies to actions of finite cyclic groups of prime order on contractible manifolds or spheres. It can be extended to actions of finite $p$-groups on arbitrary manifolds using equivariant cohomology (see e.g. \cite{Bo}), but results of Jones
\cite{Jones} imply that Smith theory cannot be extended beyond $p$-groups.

Yet another strategy to make the previous questions affordable is to consider
actions of a group $G$ on a manifold $X$
and to prove properties, not on the action of $G$ on $X$,
but on the restriction to some subgroup of $G$ of index
bounded above by a constant depending only on $X$.
All the results  in this survey follow this strategy. As a consequence, they don't say anything
interesting on actions of small finite groups, but they become meaningful once one
considers actions of large finite groups (where the meaning of {\it large} depends
on the manifold supporting the action). The benefit of allowing to pass to a subgroup
is, as we will see, that the results are valid for large collections of manifolds, and in some cases for
manifolds satisfying only a finiteness condition such as being
closed, or compact, or having finitely generated integral homology.

If a topological manifold $X$ is endowed with some geometric structure then one may consider
actions of finite groups $G\to\Homeo(X)$ whose image is contained in the group $\Aut(X)\leq\Homeo(X)$ of homeomorphisms
preserving the given structure. For example, we may consider differentiable, complex
or symplectic structures.
Although our main focus is on continuous actions,
we will say a few words on actions preserving geometric structures. The automorphism group of
a geometric structure on $X$ is usually
much smaller than $\Homeo(X)$, so questions on finite transformation groups tend to become
simpler in the presence of invariant geometric structures, as we will see in a few examples.

\subsection{Conventions, notations and contents.}
In this paper {\it manifold} means topological manifold, possibly with boundary.
A closed manifold is a compact manifold with empty boundary.
By convention, all group actions on manifolds will be implicitly assumed to be
continuous. If $X$ and $Y$ are topological spaces, $X\cong Y$ will mean that
$X$ and $Y$ are homeomorphic. When we refer to a $p$-group without specifying the prime
$p$ we mean a $p$-group for an arbitrary prime $p$.
If $G$ is a group, by $H\leq G$ we mean that $H$ is a subgroup of $G$.

If $P(n)$ denotes some statement depending on a natural number $n$, we say that
{\it $P(n)$ is true for arbitrarily large values of $n$}
if there exists a sequence of natural numbers,
$n_i\to\infty$, such that $P(n_i)$ is true for every $i$.
Note that it may certainly happen that
both $P(n)$ and its negation are true for arbitrary large values of $n$.

Most of the results stated in this survey have appeared with proof elsewhere, so we
won't prove them here. The main exceptions are Theorems
\ref{thm:disc-sym-low-dimension},
\ref{thm:smooth-disc-sym-low-dimension} and
\ref{thm:nilpotent-generators}, which
are proved in Section \ref{s:proofs},
the part of Theorem \ref{thm:disc-sym-rationally-hypertoral-WLS}
referring to weak Lefschetz symplectic manifolds, which will be proved in
\cite{Mundet2023-0}, and Theorem \ref{thm:abelian-free-actions-tori}, which will be proved in \cite{Mundet2023-1}.

Each section except for the last one is concerned with a particular aspect of finite group
actions. Sections from \ref{s:Jordan} to \ref{s:almost-asymmetric}
refer to the group itself (how far it may be from being abelian or nilpotent, how many elements
you need to generate it, or how big it can be), while Sections from \ref{s:trivial-homology} to \ref{s:stabilizers} refer to the geometry of the action (induced action on homology, the rotation morphism, existence of fixed points, and number of
different stabilizer subgroups).

\section{Ghys's question and Jordan property}
\label{s:Jordan}

Around twenty years ago  \'Etienne Ghys asked in a series of talks \cite{Ghys} the following question:
given a closed manifold $X$, does there exists a constant $C$ such that any finite group $G$
acting effectively on $X$ has an abelian subgroup $A\leq G$ satisfying $[G:A]\leq C$?
Ghys was most probably thinking about a {\it differentiable} manifold and about {\it smooth} actions
on it (see \cite[Question 13.1]{Fisher}), as one of his motivations was \cite{Ghys1993}, but
the question also makes sense for continuous actions on (topological) manifolds.

Motivated by a similar conjecture by Jean-Pierre Serre \cite{Serre} on the Cremona group and its natural extension
to arbitrary birational transformations groups, Vladimir L. Popov \cite{Po0} defined a group $\Gamma$ to be
{\it Jordan} if there is a number $C$ such that any finite subgroup $G\leq \Gamma$
has an abelian subgroup $A\leq G$ satisfying $[G:A]\leq C$.
The name is inspired by Jordan's theorem \cite[\S 36]{CR},
according to which $\GL(n,\CC)$ is Jordan for every $n$
(the result originally appeared in \cite{Jordan}; see \cite{Breu}
for a modern exposition of Jordan's original argument and for
references to alternative proofs). So Ghys's question
asks whether $\Diff(X)$ is Jordan for every closed manifold $X$.

Partial positive answers to Ghys's question appeared in \cite{Mundet2010,Mundet2016,Mundet2019,Zimmermann2014}.
The results in \cite{Mundet2019} are based on the main result in \cite{MundetTurull},
which characterizes Jordan groups in terms of finite
subgroups whose cardinal is of the form $p^aq^b$ for primes $p,q$ and integers $a,b$. The paper \cite{MundetTurull} uses fundamentally the classification of finite simple groups (CFSG).

In 2014 Bal\'azs Csik\'os, L\'aszl\'o Pyber and Endre Szab\'o \cite{CPS} proved
that $\Diff(T^2\times S^2)$ is not Jordan, thus giving
the first example of a smooth manifold whose diffeomorphism group is not Jordan.
This was followed by more examples in \cite{Mundet2017,Szabo2019}. Consequently,
Ghys modified his conjecture replacing {\it abelian} by {\it nilpotent} \cite{Gh2}. This modified
conjecture was proved
in dimension four by the author and Carles S\'aez-Calvo \cite{MundetSaez}, and it has been recently
proved in arbitrary dimensions for continuous actions on (topological) manifolds by
Csik\'os, Pyber and Szab\'o \cite{CPS2}. Actually, the main result in \cite{CPS2}
does not require the manifold to be compact:

\begin{theorem}[Theorem 1.4 in \cite{CPS2}]
\label{thm:CPS2}
Let $X$ be a manifold such that $H_*(X;\ZZ)$ is finitely generated.
There exists a constant $C$ such that any finite group $G$ acting effectively on $X$
has a nilpotent subgroup $N\leq G$ satisfying $[G:N]\leq C$.
\end{theorem}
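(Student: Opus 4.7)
The strategy I would pursue follows the same template that succeeded for the original (abelian) Jordan question in \cite{MundetTurull, Mundet2019}: separate the problem into a purely group-theoretic reduction and a geometric input coming from equivariant cohomology. The goal is to extract from the hypothesis that $H_*(X;\ZZ)$ is finitely generated enough arithmetic information about the finite subgroups of $\Homeo(X)$ so that a CFSG-based structure theorem forces the existence of a bounded-index nilpotent subgroup.

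\textbf{Group-theoretic reduction.} First I would prove an abstract characterization: a class $\mathcal F$ of finite groups has the property that each $G \in \mathcal F$ contains a nilpotent subgroup of index at most a uniform constant $C(\mathcal F)$ if and only if the corresponding property holds when $\mathcal F$ is replaced by its subclass $\mathcal F_{p,q}$ of finite $\{p,q\}$-groups inside $\mathcal F$, for every pair of primes $p, q$. The forward implication is trivial; the substantial direction would be established along the lines of \cite{MundetTurull}, using CFSG to bound the non-abelian composition factors and then passing to the generalized Fitting subgroup. The extra work relative to the abelian Jordan case is that, once a solvable subgroup of bounded index is extracted, one must still control how coprime pieces act on each other in order to achieve genuine nilpotence rather than mere solvability of bounded Fitting length.

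\textbf{Geometric input.} Next I would verify the hypothesis of that criterion for $\mathcal F = \{\text{finite subgroups of } \Homeo(X)\}$. Two inputs from Smith theory and Borel equivariant cohomology enter here: first, a Mann--Su type bound on the rank of every elementary abelian $p$-subgroup acting effectively on $X$, which is available for all primes $p$ because $H_*(X;\ZZ)$ is finitely generated; second, a control on how a $p$-group and a $q$-group can interlock when they act simultaneously on $X$, obtained by analysing the Borel construction $EG \times_G X$ and the way the fixed-point loci $X^{P}$ and $X^{Q}$ sit inside $X$ for Sylow subgroups $P, Q$. Combining these should yield, for each pair $(p,q)$, a nilpotent subgroup of bounded index in every $\{p,q\}$-subgroup of $\Homeo(X)$.

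\textbf{Main obstacle.} The step I expect to be hardest is the group-theoretic reduction, because the class of nilpotent groups is far less flexible than the class of abelian groups: a $\{p,q\}$-group is nilpotent only when it splits as the direct product of its Sylow $p$- and $q$-subgroups, so bounding the ``non-nilpotence defect'' by a uniform index requires a tight commutation condition between the two Sylow parts rather than merely a subnormal series with abelian quotients. Translating bounds on rank and on composition factors into such commutation demands, after invoking CFSG, an effective analysis of coprime automorphism actions on $p$-groups of bounded rank, in the spirit of Thompson's $A \times B$ lemma and Hall--Higman theory. By contrast, the input from equivariant cohomology, while indispensable, follows patterns already well-developed in \cite{Mundet2019} and in the classical literature on $p$-ranks of transformation groups.
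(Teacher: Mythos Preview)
This paper is a survey and does not give its own proof of Theorem~\ref{thm:CPS2}; the result is quoted from \cite{CPS2}. The only information the paper supplies about the argument is the sentence following the theorem: the proof in \cite{CPS2} rests on a CFSG-based group-theoretic statement (\cite[Corollary~3.18]{CPS2}) together with a transformation-group lemma (\cite[Lemma~6.1]{CPS2}), and these two ingredients combine into a criterion that only requires checking finite $p$-groups.

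Compared with that description, your outline has the right two-step shape (group-theoretic reduction plus geometric input), but the reduction you propose is to $\{p,q\}$-groups, in the style of \cite{MundetTurull}, rather than to $p$-groups for a single prime. That is a genuine difference. The advantage of the single-prime reduction is that the geometric input then lives entirely inside Smith theory and $\ZZ/p$-equivariant cohomology, where the tools are well understood; by contrast your ``control on how a $p$-group and a $q$-group can interlock when they act simultaneously on $X$'' is exactly the place where Smith-type methods break down (there is no useful localisation theorem for $\ZZ/p\times\ZZ/q$ with $p\neq q$), and you do not say what replaces it. So the step you flag as the main obstacle---the group theory---may actually be manageable along the lines you indicate, but the geometric step you describe as ``following well-developed patterns'' is where your proposal is vaguest and where the analogy with \cite{Mundet2019} is least reliable. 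In short: the architecture is plausible, but the proposal is a strategy rather than a proof, and its weakest link is not the one you identify.
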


Two of the main ingredients in the proof of \cite[Theorem 1.4]{CPS2}, the first
a result on finite groups proved using the CFSG \cite[Corollary 3.18]{CPS2}, and the second
a result on finite transformation groups \cite[Lemma 6.1]{CPS2}, can be combined to obtain
a criterion for Jordan property {\it of homeomorphism groups} which only requires
to consider finite $p$-groups. This has allowed to extend the results
on Jordan property for diffeomorphism groups proved in \cite{Mundet2019} to homeomorphism groups.
The current knowledge on the question is summarized in the following theorem.

\begin{theorem}
\label{thm:Jordan-homeo}
Let $X$ be a manifold. If any of the following conditions is true,
then $\Homeo(X)$ is Jordan:
\begin{enumerate}
\item $X$ is compact and $\dim X\leq 3$, see \cite{Mundet2022} (the case $\dim X=3$ follows from combining
\cite{Pardon2019,Zimmermann2014}),
\item $X$ is $n$-dimensional and $H_*(X;\ZZ)\simeq H_*(S^n;\ZZ)$, see \cite{Mundet2022},
\item $X$ is connected, $H_*(X;\ZZ)$ is finitely generated, and the Euler characteristic
$\chi(X)$ of $X$ is nonzero, see \cite{Mundet2022},
\item $X$ is rationally hypertoral (we define this below), see \cite{Mundet2021},
\item $X$ is closed and it supports a flat metric, see \cite[Corollary 1.7]{Ye}.
\end{enumerate}
If $X$ supports an effective action of $\SU(2)$ or $\SO(3,\RR)$ then $\Homeo(T^2\times X)$
is not Jordan, see \cite{Mundet2017}.
\end{theorem}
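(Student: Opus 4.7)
The unifying strategy is the $p$-group reduction criterion alluded to in the text: combining the CFSG-based result \cite[Corollary 3.18]{CPS2} with the transformation-group input \cite[Lemma 6.1]{CPS2}, one shows that $\Homeo(X)$ is Jordan whenever, for every prime $p$, there is a bound with controlled growth in $p$ on the index of an abelian subgroup of any finite $p$-subgroup of $\Homeo(X)$. This replaces a question about arbitrary finite subgroups with a question about $p$-groups, where Smith theory and equivariant cohomology become available. So the plan for each of (1)--(5) is to verify the $p$-group bound by the topological hypothesis peculiar to that case.

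I would handle the five cases as follows. For (1), dimensions $\le 2$ are classical, and in dimension $3$ the plan is to invoke Pardon's theorem \cite{Pardon2019} to conjugate a continuous finite group action to a smooth one, and then apply Zimmermann's Jordan property for $\Diff$ of compact $3$-manifolds \cite{Zimmermann2014}. For (2), Smith theory applied to a finite $p$-group $P\leq\Homeo(X)$ shows that $X^P$ is itself a mod-$p$ homology sphere of strictly smaller dimension, so induction on $\dim X$, with the small-dimensional base cases supplied by (1), gives the required estimate. For (3), the equivariant Euler characteristic formula gives $\chi(X^P)\equiv\chi(X)\not\equiv 0\pmod p$ for all but finitely many primes, so $X^P\neq\emptyset$, and one then localizes around a fixed point, controlling $P$ by its action on the local cohomology there. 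For (4), rationally hypertoral means (as defined later in the paper) that $X$ carries enough degree-one rational cohomology classes for a top cup product to be nonzero; the induced $P$-action on $H^1(X;\QQ)$ factors through a finite subgroup of $\GL(n,\QQ)$ bounded by Minkowski, while the kernel is controlled by a Lefschetz-type argument on the top cup product class. For (5), Bieberbach's theorem gives a finite torus cover of $X$, and any continuous finite action, after passing to a subgroup of controlled index, lifts to that cover, where case (4) or the result of \cite{Mundet2021} applies directly.

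For the negative statement, I would adapt the Csik\'os--Pyber--Szab\'o construction \cite{CPS}: combining translations on $T^2$ with suitable torsion elements of $\SU(2)$ (or $\SO(3,\RR)$) acting on $X$, one realizes, for arbitrarily large $p$, non-split central extensions of $(\ZZ/p)^2$ by $\ZZ/p$ inside $\Homeo(T^2\times X)$; such Heisenberg-type $p$-groups admit no abelian subgroup of index less than $p$, ruling out Jordan. The main obstacle, common to cases (2)--(4), is the geometric-to-cohomological bridge encapsulated in \cite[Lemma 6.1]{CPS2}: passing from a bound on how a $p$-group acts on cohomology to a bound on the geometric $p$-group itself requires delicate control of the possible stabilizer structure, and this is where most of the technical difficulty of the proof is concentrated.
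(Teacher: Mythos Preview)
Your overall framework---the $p$-group reduction criterion from \cite{CPS2}---is correct and is exactly what the paper invokes. The paper itself does not prove Theorem~\ref{thm:Jordan-homeo} (it is a survey statement with citations for each case), but it does sketch case (3) in Section~\ref{s:fixed-points}, and your outline there is on the right track: a bounded-index subgroup of any finite $p$-group has a fixed point. The step you describe vaguely as ``localizes around a fixed point, controlling $P$ by its action on the local cohomology there'' is, concretely, the Dotzel--Hamrick theorem (Lemma~\ref{lemma:linearising-continuous} in the paper): a finite $p$-group acting continuously and effectively on a connected $n$-manifold with a fixed point embeds in $\GL(n,\RR)$, after which the classical Jordan theorem handles the $p$-group case.

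Two genuine issues. First, in case (1), Pardon's theorem does \emph{not} conjugate a topological action to a smooth one---Bing's wild involution on $S^3$ already obstructs this, as the paper itself notes in Section~\ref{ss:thm:disc-sym-low-dimension}. What Pardon proves is that the same abstract finite group also admits an effective smooth action on $X$. For the Jordan property this is enough, since Jordan is a statement about which abstract finite groups embed in $\Homeo(X)$, but your phrasing is incorrect.

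Second, and more seriously, your induction in case (2) has a real gap. For a continuous action of a finite $p$-group $P$ on a manifold $X$, Smith theory does tell you that $X^P$ is a mod-$p$ homology sphere, but $X^P$ need not be a manifold (and may well be empty---think of free $\ZZ/p$-actions on odd spheres), so you cannot feed it back into an inductive hypothesis about manifolds. Nor is it clear what group would act on $X^P$ in your scheme, since $P$ itself acts trivially there. The approach that actually works is again via fixed points and Dotzel--Hamrick, as in case (3); the extra difficulty for odd-dimensional homology spheres, where $\chi(X)=0$ and the Euler-characteristic argument is unavailable, requires the more careful analysis carried out in \cite{Mundet2022}.
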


We say that an $n$-manifold $X$ is {\it rationally hypertoral} if
$X$ is closed, connected and orientable, and it admits a map of nonzero
degree to $T^n=(S^1)^n$. If $X$ is connected,
this is equivalent to the property that $H^n(X;\RR)\simeq\RR$ and the cup product map
$\Lambda^nH^1(X;\RR)\to H^n(X;\RR)$ is surjective.

No characterisation seems to be known at present of which manifolds have Jordan homeomorphism group.

The CFSG is used both in the proof of Theorem \ref{thm:CPS2} and in the proof of
cases (2) and (3) of Theorem \ref{thm:Jordan-homeo}. Since the proof of the CFSG
is extremely long and complicated, it is natural to ask whether it can be avoided
in those proofs, perhaps replacing it by geometric arguments.

\begin{question}
Can one prove Theorem \ref{thm:CPS2} and
cases (2) and (3) of Theorem \ref{thm:Jordan-homeo} without using
the CFSG?
\end{question}

The Jordan property has  recently been studied
for automorphism groups of some geometric structures on
manifolds. In \cite{Mundet2017-0} it was proved
that if $X$ is $T^2\times S^2$ endowed with any symplectic structure then $\Symp(X)$ is
Jordan. This was extended in \cite{MundetSaez} to all closed symplectic
$4$-manifolds. The automorphism group of any closed almost complex $4$-manifold has
been proved to be Jordan in \cite{MundetSaez}. The particular case of compact complex surfaces
had been earlier proved by Yuri Prokhorov and Constantin Shramov in \cite{PS21}.
Finally, the isometry group of any closed Lorentz $4$-manifold has been proved to be Jordan in \cite{Mundet2020}.

In higher dimensions the following questions seem to be open.

\begin{question}
Let $X$ be a compact symplectic (resp. Lorentz, complex, or almost complex) manifold.
Is the automorphism group of $X$ necessarily Jordan?
\end{question}

There are however a few results on Jordan property of automorphism groups
of geometric structures in higher dimensions. In \cite{Mundet2018} it is proved that
the symplectomorphism group of any compact symplectic manifold with vanishing first
Betti number is Jordan. It is also proved in \cite{Mundet2018} that Hamiltonian
diffeomorphism groups of arbitrary compact symplectic manifolds are Jordan.
Sheng Meng and De-Qi Zhang proved in \cite{MZ} that the automorphism group of any
projective variety defined in characteristic zero is Jordan.
Automorphism groups of compact Kaehler manifolds have been
proved to be Jordan by Jin Hong Kim in \cite{Kim}. This has been extended to
compact varieties in Fujiki's class (that is, compact reduced complex spaces which
are bimeromorphic to a compact K\"ahler manifold) by Sheng Meng, Fabio Perroni and De-Qi Zhang
in \cite{MPZ}. Recently, Aleksei Golota has proved that parallelizable
compact complex manifolds have Jordan automorphism group in \cite{Golota2}.
Other interesting results on Jordan property for automorphism groups of certain complex
manifolds have been obtained by F. Bogomolov, N. Kurnosov, A. Kuznetsova and E. Yasinsky
for Bogomolov--Guan manifolds \cite{BKKY22},
by K. Loginov for manifolds of small Kodaira dimension \cite{Log22},
and by A. Savelyeva for Hopf manifolds \cite{Sav24}.

We mentioned at the beginning of this section
that the Jordan property has also been studied in algebraic geometry for birational transformation
groups. The situation there is remarkably parallel to that for homeomorphism
groups, with some varieties having Jordan birational transformation group and others not (in fact,
the first counterexample to be found is the product of an elliptic curve by the projective line,
see \cite{Zarhin}).
Some central contributions to these question are, among others, work of Jean-Pierre Serre \cite{Serre},
Vladimir Popov \cite{Po0}, Yuri G. Zarhin \cite{Zarhin}, and Yuri Prokhorov and Constantin Shramov
\cite{PS14,PS16}.
The interested reader can consult the paper \cite{Guld} by Attila Guld for an analogue
in that context of Theorem \ref{thm:CPS2}, a list of references, and the history of the
problem.

\section{Actions of finite abelian groups}
\label{s:abelian}

For any finite group $G$ we denote by $d(G)$ the minimal size of a generating set of $G$.
By convention, the trivial group has $d=0$.

The next result follows from a theorem of L.N. Mann and J.C. Su
\cite[Theorem 2.5]{MannSu}.

\begin{theorem}
\label{thm:MS}
Let $X$ be a compact manifold. There exists a constant $C$, depending only on $X$,
such that for any finite abelian group $A$ we have $d(A)\leq C$.
\end{theorem}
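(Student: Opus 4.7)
The plan is to reduce the problem to bounding the ranks of elementary abelian $p$-subgroups that act effectively on $X$, and then invoke Mann and Su \cite{MannSu}.

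First, for any finite abelian group $A$, I would decompose $A=\bigoplus_{p}A_{p}$ into its $p$-primary components. Because these components have pairwise coprime orders, $d(A)=\max_{p}d(A_{p})$. For a finite abelian $p$-group $A_{p}$, the invariant $d(A_{p})$ equals $\dim_{\FF_{p}}(A_{p}/pA_{p})$, which in turn equals the rank of the socle $A_{p}[p]:=\ker(p\cdot:A_{p}\to A_{p})\cong(\ZZ/p)^{d(A_{p})}$. Hence, for the prime $p$ realising the maximum, $A$ contains an elementary abelian $p$-subgroup of rank $d(A)$, and this subgroup inherits an effective action on $X$.

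Consequently it suffices to find a constant $C$, depending only on $X$, such that whenever $(\ZZ/p)^{r}$ acts effectively on $X$ one has $r\leq C$, and this uniformity in $p$ is precisely the content of \cite[Theorem 2.5]{MannSu}. The proof proceeds via Smith theory for elementary abelian $p$-groups: analysing the fixed-point sets $X^{H}$ as $H$ runs over the codimension-one subgroups of $(\ZZ/p)^{r}$, together with Euler--Poincar\'e-type inequalities, one derives an estimate of the form
\[
r\leq\dim X+\sum_{i}\dim_{\FF_{p}}H_{i}(X;\FF_{p}).
\]
For any single prime this is already finite, since $X$ is compact and hence $H_{*}(X;\ZZ)$ is finitely generated.

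The main obstacle, and the heart of Mann and Su's contribution, is obtaining a bound uniform in $p$. For every prime $p$ larger than some threshold $p_{0}(X)$ the group $H_{*}(X;\ZZ)$ has no $p$-torsion, so $\dim_{\FF_{p}}H_{i}(X;\FF_{p})$ coincides with the rational Betti number $\dim_{\QQ}H_{i}(X;\QQ)$ and the Smith estimate becomes independent of $p$. For each of the finitely many primes $p\leq p_{0}(X)$ the estimate yields its own finite bound, and taking the maximum of these finitely many bounds together with the stable bound for $p>p_{0}(X)$ produces the desired absolute constant $C$. Combining this with the reduction of the first paragraph gives $d(A)\leq C$ for every finite abelian group $A$ acting effectively on $X$, as required.
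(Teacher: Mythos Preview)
Your proposal is correct and takes essentially the same approach as the paper: the paper simply asserts that Theorem~\ref{thm:MS} follows from \cite[Theorem 2.5]{MannSu}, noting that the Mann--Su result concerns actions of $(\ZZ/p)^k$ and that the resulting bound can be taken to be the maximum over all fields of the total Betti number of $X$. Your explicit reduction from an arbitrary finite abelian $A$ to its socle $A_p[p]\cong(\ZZ/p)^{d(A_p)}$, together with the identity $d(A)=\max_p d(A_p)$, is exactly the implicit step the paper leaves to the reader, and your discussion of uniformity in $p$ matches the paper's remark about maximizing Betti numbers over all coefficient fields.
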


The original theorem of Mann and Su refers to actions of groups of the form $(\ZZ/p)^k$.
The bound in Theorem \ref{thm:MS} is the same one as that in
\cite[Theorem 2.5]{MannSu} and is explicit: it can be chosen to depend on the sum of the Betti
numbers of $X$ maximized over all possible fields. The theorem of Mann and Su,
and hence Theorem \ref{thm:MS},
is also valid for non compact manifolds $X$ with finitely generated $H_*(X;\ZZ)$ (see \cite[Theorem 1.8]{CMPS}).

Finding, for an arbitrary manifold $X$, the optimal value of the constant $C(X)$
in Theorem \ref{thm:MS} is probably a very difficult question. Following the strategy
described in the Introduction we are led to the following questions, which we will motivate below.

\begin{question}
\label{quest:abelian-1}
Does there exist, for every compact connected $n$-dimensional manifold $X$, a constant $C$ such that any
finite abelian group $A$ acting effectively on $X$ has a subgroup
$B$ satisfying $[A:B]\leq C$ and $d(B)\leq n$?
\end{question}

\begin{question}
\label{quest:abelian-1-2}
Is $X=T^n$ the only $n$-dimensional compact connected manifold for which the bound $n$ in the previous question
cannot be replaced by $n-1$?
\end{question}

Answering affirmatively Question \ref{quest:abelian-1-2} would materialize in the context of finite group actions on manifolds a beautiful poem of Tom\`as Garc\'es\footnote{"Si veiessis el blau fumerol / adormit a la vella teulada, / ?`em diries quants rostres hi ha / en el clar borrissol de la flama? / Si un vaixell, en el tr\`emul mat\'{\i} / de les lloses del moll se separa, / ?`no sabries, per l'ombra que es mou, / els ad\'eus que bateguen en l'aire? / I si un trot fugitiu deixond\'{\i} / els camins esva\"{\i}ts de la tarda, / ?`per l'espurna que fan els cavalls, / endevines els ulls de la dama?". This is a rough translation, lacking unfortunately the musicality of the original Catalan version: "If you saw the blue sleeping smoke / lying down on the ancient roof / would you tell me the number of faces / on the clear fluff of the flame? / If a boat, in the trembling morning / goes away from the quay / would you know, from the moving shadow / the beating farewells in the air? / And if a fleeing gallop awoke / the fading paths of the afternoon / would the sparks made by
the horse / tell you the eyes of the lady?".}: it would have the remarkable consequence that, in the particular case of the torus, you can recover a manifold from information on the collection of finite groups that act on it. This is usually impossible, as there are plenty of closed asymmetric manifolds (see Section \ref{s:almost-asymmetric} below).

The optimal value, for a fixed manifold $X$, of the constant $C$ in Question \ref{quest:abelian-1}
can be arbitrarily large as soon as $n\geq 2$. For example, any finite group acts freely (hence effectively) on some closed connected and orientable surface; applying this fact to
$(\ZZ/p)^r$ for any prime $p$ and any $r>2$ we get closed connected orientable surfaces for which the optimal constant $C$ in Question \ref{quest:abelian-1} is as large as we wish.

We next explain the motivation behind the previous questions.
Let $X$ be a compact $n$-manifold, and consider the set
$$\mu(X)=\{m\in\NN\mid \text{$X$ supports effective actions of $(\ZZ/r)^m$ for arbitrarily large $r$}\}.$$
Theorem \ref{thm:MS} implies that $\mu(X)$ is finite. Following \cite{Mundet2021} we define
the {\it discrete degree of symmetry of $X$} to be
$$\discsym(X)=\max (\{0\}\cup\mu(X)).$$
By \cite[Lemma 2.7]{Mundet2021}, for any nonnegative integer $k$ the inequality $\discsym(X)\leq k$
is equivalent to the existence of a contant $C$ such that any finite abelian group $A$
acting effectively on $X$ has a subgroup $B$ satisfying $[A:B]\leq C$
and $d(B)\leq k$. Hence, the Question \ref{quest:abelian-1} (resp. Question \ref{quest:abelian-1-2})
is equivalent to the following Question \ref{quest:abelian-2} (resp. Question \ref{quest:abelian-2-2}).

\begin{question}
\label{quest:abelian-2}
Is $\discsym(X)\leq n$ for every compact connected $n$-manifold $X$?
\end{question}

\begin{question}
\label{quest:abelian-2-2}
If a compact connected $n$-manifold $X$ satisfies $\discsym(X)=n$,
is $X$ necessarily homeomorphic to $T^n$?
\end{question}

An affirmative answer to Questions
\ref{quest:abelian-2} and  \ref{quest:abelian-2-2} would be an example of a rigidity
result\footnote{By this we mean a result that fits the
following vague pattern. Let $\sS$ be a set of geometric objects of some type and let
$\phi:\sS\longrightarrow \RR$ be a map. A rigidity result for $(\sS,\phi)$ is the statement that
(1) there is an upper bound $M:=\max\phi(\sS)<\infty$, and that (2) $\phi^{-1}(M)\neq\emptyset$
and the objects in $\phi^{-1}(M)\subset \sS$ are more rigid (or less abundant) than those in other fibers of $\phi$.
Some examples: (i) $\sS=\{\text{simple smooth curves $\gamma\subset\RR^2$ of total length $2\pi$}\}$,
$\phi(\gamma)=\text{area enclosed by $\gamma$}$; here, the isoperimetric inequality
is a rigidity result.
(ii) Mostow rigidity after Besson-Courtois-Gallot, see \cite{BCG}.
(iii) $\sS=\{\text{$U(p,q)$-local systems on a closed connected surface}\}$,
$\phi=\text{Toledo invariant}$; here the rigidity result is due to Toledo, Hern\'andez, and Bradlow-Garc\'{\i}a--Prada-Gothen,
see \cite{BGPG,Hernandez,Toledo}. Answering affirmatively Questions
\ref{quest:abelian-2} and  \ref{quest:abelian-2-2} would imply a rigidity result for $\sS=\{\text{closed connected
$n$-dimensional manifolds}\}$ and $\phi=\discsym$.}.

The requirement in the previous questions that the manifold $X$ is connected is crucial,
for otherwise there would be no hope of bounding $\discsym(X)$ by a constant depending
only on $\dim X$. Indeed, if for example $X=X_1\sqcup\dots\sqcup X_k$ and each $X_i$ is equal
to $S^1$, then $X$ is one dimensional but it supports an effective action of $T^k$, where
the action of $(\theta_1,\dots,\theta_k)\in T^k$ on the component $X_j\subset X$ is given
by multiplication by $\theta_j$. In fact, we have in this case $\discsym(X)=k$, as the reader
can easily check.

Why are the previous questions reasonable? For any sequence of integers $r_i\to\infty$
it seems to be a natural intuition that the sequence of groups $(\ZZ/r_i)^m$ "converges"
to the torus $T^m$. Thus one may heuristically expect that having
effective actions of each of the groups $(\ZZ/r_i)^m$ on
a given manifold should have similar implications as
having an action of $T^m$ on that manifold.
Since no connected $n$-manifold supports a continuous action of
a torus of dimension bigger than $n$, and since the only $n$-manifold supporting
an effective action of $T^n$ is $T^n$ itself (see e.g. the proof of Theorem 1.9 in
\cite{Mundet2021} for a proof), the previous heuristic naturally leads to the question above.

One may transform the previous heuristic into an actual theorem in different ways.
Perhaps the most natural is the following one, which is a little
exercise in Lie group theory (see \cite[Theorem 1.10]{Mundet2021}).

\begin{theorem}
\label{thm:cpct-Lie-group}
Let $G$ be a compact Lie group. There exists a constant $C$ such that for every
finite abelian subgroup $A\leq G$ there is a torus $T\leq G$
satisfying $[A:A\cap T]\leq C$. Hence, the following are equivalent for each $m\in\NN$:
\begin{enumerate}
\item There exists subgroups of $G$ isomorphic to $(\ZZ/r)^m$ for arbitrarily big integers $r$.
\item There exists an $m$-dimensional torus in $G$.
\end{enumerate}
\end{theorem}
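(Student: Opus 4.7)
The plan is to establish the first assertion (existence of the constant $C$) by induction on $\dim G$, and then to derive the equivalence of (1) and (2) as a direct corollary.

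First I would reduce to the connected case: for any finite abelian $A\leq G$, the intersection $A\cap G^0$ has index at most $[G:G^0]$ in $A$, a quantity depending only on $G$, so it suffices to produce a torus $T\leq G^0$ capturing a bounded-index subgroup of $A\cap G^0$. From now on assume $G$ is compact connected, and proceed by induction on $\dim G$. The base $\dim G=0$ is trivial. For the inductive step let $A\leq G$ be finite abelian. If $A\subseteq Z(G)$, then $A\cap Z(G)^0$ has index at most $|Z(G)/Z(G)^0|$ in $A$, a finite number depending only on $G$ (bounded e.g.\ by $|Z([G,G])|$), and $Z(G)^0$ is a torus of $G$. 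Otherwise, pick $a\in A\setminus Z(G)$ and set $H:=Z_G(a)^0$. Since $a$ is non-central and $G$ is connected, $H$ is a proper closed connected Lie subgroup of $G$, so $\dim H<\dim G$. As $A$ is abelian, $A\subseteq Z_G(a)$, and hence $[A:A\cap H]\leq|Z_G(a)/Z_G(a)^0|$, which (after conjugating $a$ into a maximal torus of $G$) is bounded by a constant depending only on $G$ such as $|W(G)|$. Applying the induction hypothesis to $H$ and the finite abelian subgroup $A\cap H\leq H$ yields a torus $T\leq H\leq G$ with $[A\cap H:(A\cap H)\cap T]\leq C(H)$, whence $[A:A\cap T]\leq|W(G)|\cdot C(H)$.

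For this last bound to depend only on $G$, the collection of groups $H=Z_G(a)^0$ appearing must be restricted. This is guaranteed by the classical fact that, for $a\in G$, $Z_G(a)^0$ is a pseudo-Levi subgroup of $G$, and such subgroups fall into only finitely many $G$-conjugacy classes via the description in terms of subdiagrams of the extended Dynkin diagram (Borel--de Siebenthal). Taking the maximum of $C(H)$ over representatives of these finitely many classes (all of dimension $<\dim G$) yields a finite constant depending only on $G$, closing the induction. The equivalence then follows easily: (2)$\Rightarrow$(1) is immediate since an $m$-dimensional torus contains $(\ZZ/r)^m$ for every $r$, and for (1)$\Rightarrow$(2), applying the first assertion to $A_r=(\ZZ/r)^m$ produces tori $T_r\leq G$ with $|A_r\cap T_r|\geq r^m/C$; since the $r$-torsion of a $d$-dimensional torus has order $r^d$, this forces $\dim T_r\geq m$ for all sufficiently large $r$, and any such $T_r$ contains an $m$-dimensional subtorus.

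The main obstacle is the uniform Lie-theoretic control demanded by the inductive step: the bound $|Z_G(a)/Z_G(a)^0|\leq|W(G)|$ together with the finiteness of the conjugacy classes of centralizer identity components as $a$ runs over $G$. Both are standard facts in the structure theory of compact connected Lie groups, but they are what keeps the constant emerging from the recursion a function of $G$ alone rather than depending on the particular element $a$ chosen in $A$ at each step.
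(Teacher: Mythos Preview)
Your argument is correct. The reduction to the connected case, the induction on $\dim G$ via the proper maximal-rank subgroup $H=Z_G(a)^0$, the bound $|Z_G(a)/Z_G(a)^0|\leq|W(G)|$ (obtained by noting that $T\leq Z_G(a)^0$ and that every component of $Z_G(a)$ meets $N_G(T)$), and the appeal to Borel--de Siebenthal to control the finitely many conjugacy classes of such $H$, all hold as you state them. The deduction of the equivalence (1)$\Leftrightarrow$(2) from the first assertion is also fine.

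As for comparison with the paper: the paper does not actually prove this statement here. It merely calls it ``a little exercise in Lie group theory'' and cites \cite[Theorem 1.10]{Mundet2021} for the details, so there is no in-text argument to compare your approach against. Your inductive proof via centralizers and Borel--de Siebenthal is one of the standard routes to this result; whether it coincides with the argument in \cite{Mundet2021} cannot be determined from the present paper.
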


Note that even if $G$ is a connected compact Lie group not every finite abelian
subgroup of $G$ is contained in a torus. For example, if $G=\SO(3,\RR)$ and
$A\leq G$ is the subgroup of diagonal matrices with entries $\pm 1$ then $A\simeq(\ZZ/2)^2$. However every nontrivial torus $T\leq \SO(3,\RR)$ is isomorphic to
$S^1$, and hence $A$ can't possibly be contained in a torus in $\SO(3,\RR)$,
for otherwise it would be cyclic. This shows
that the constant $C$ in the previous theorem cannot be chosen to be $1$ in some
cases where $G$ is connected.

If $(X,g)$ is a closed Riemannian manifold, then by Myers--Steenrod's theorem the isometry group
$\Isom(X,g)$ is a compact Lie group. Applying Theorem \ref{thm:cpct-Lie-group} to it we conclude
that the following are equivalent:
\begin{enumerate}
\item There exist effective actions by isometries of $(\ZZ/r)^m$ on $(X,g)$ for arbitrarily
big integers $r$.
\item There exists an effective action by isometries of $T^m$ on $(X,g)$.
\end{enumerate}

If one replaces the compact Lie group $G$ by the homeomorphism group of a manifold then things
become much more complicated, and in fact the analogue of Theorem \ref{thm:cpct-Lie-group} in that
case fails to be true in general, as shown by the following theorem.

\begin{theorem}
There exist closed connected manifolds $X$
satisfying $\discsym(X)\geq 1$ but supporting no effective action of the circle.
\end{theorem}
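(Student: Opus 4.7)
The goal is to construct a closed connected manifold $X$ with (i) effective $\ZZ/r$-actions for arbitrarily large $r$ and (ii) no effective $S^1$-action. Theorem \ref{thm:cpct-Lie-group} rules out (i) and (ii) coexisting within the isometry group of any single Riemannian metric on $X$, so the cyclic actions in the example must be topological but not simultaneously isometric, and the obstruction to the $S^1$-action must be topological rather than metric in nature.

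To obstruct (ii) I propose to use the Conner--Raymond theorem: any effective $S^1$-action on a closed aspherical manifold $X$ lifts to an $\RR$-action on the universal cover $\widetilde X$ commuting with the deck group, and the loop $t\mapsto t\cdot\widetilde x_0$ descends to a nontrivial central element of $\pi_1(X)$. Consequently I would look for $X$ closed aspherical with $Z(\pi_1(X))=1$. A natural way to leverage this, while circumventing the rigidity obstruction to (i), is to take $X=Y\times Z$ where $Y$ is closed hyperbolic of dimension $\geq 3$ (centerless $\pi_1$, Borel rigidity) and $Z$ is a closed manifold chosen to supply the cyclic symmetries. Lifting any hypothetical $S^1$-action on $Y\times Z$ to the universal cover and using that the centralizer of a cocompact hyperbolic lattice in $\Homeo(\widetilde Y)$ is trivial, one checks that the $\RR$-action is trivial on the $\widetilde Y$-factor and so descends to an $S^1$-action on $Z$; in particular no effective $S^1$-action exists on $X$ as long as none exists on $Z$.

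The main obstacle is that this reduction turns the original problem for $X$ into the same problem for $Z$, and one must break the circularity by producing $Z$ of a genuinely simpler type. Concretely, I would seek $Z$ to be a closed simply connected topological manifold carrying a sequence of exotic (non-smoothable) $\ZZ/r$-actions obtained by equivariant topological surgery along the lines of Cappell--Weinberger--Yan; in such an example the $\ZZ/r$-actions exist by explicit construction while the absence of an effective $S^1$-action can be verified by an equivariant topological invariant, for instance a $G$-signature constraint, an equivariant characteristic class obstruction, or an argument using the structure of principal orbits of a putative $S^1$-action that is incompatible with the non-smoothable cyclic symmetries. The technical heart of the proof is the explicit production of such a $Z$ and the verification that $Y\times Z$ has $\discsym\geq 1$ while admitting no effective $S^1$-action; together these give the required $X$.
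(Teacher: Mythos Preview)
Your plan does not match the paper's route, and it has a real gap. The paper does not give an argument; it simply cites Cappell--Weinberger--Yan \cite{CWY} for the manifolds $X$ and for the nonexistence of an effective $S^1$-action, and cites \cite[Theorem 1.11]{Mundet2021} for $\discsym(X)\geq 1$. The point of \cite{CWY} is precisely the \emph{opposite} of your Conner--Raymond strategy: their manifolds are closed aspherical with \emph{nontrivial} center in $\pi_1$, so Conner--Raymond gives no obstruction whatsoever. The absence of an $S^1$-action in \cite{CWY} is detected by a surgery-/$K$-theoretic invariant (a nontrivial element in a Nil/UNil-type group entering the structure set), not by the fundamental group. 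The central $\ZZ\leq\pi_1(X)$ is in fact what supplies the large finite cyclic symmetries: one takes ``$r$-th roots'' of the deck transformation of the associated infinite cyclic cover, and this is exactly what \cite[Theorem 1.11]{Mundet2021} makes precise to get $\discsym(X)\geq 1$.

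On your own plan: the product reduction $X=Y\times Z$ is, as you yourself note, circular --- you end up needing the very property for $Z$ that you wanted for $X$. Your proposed escape (a simply connected $Z$ with non-smoothable $\ZZ/r$-actions ``along the lines of Cappell--Weinberger--Yan'') is not what \cite{CWY} constructs, and you do not actually build such a $Z$ or give a mechanism to obstruct $S^1$ on it. There is also a technical gap in the reduction step: an $\RR$-action on $\widetilde Y\times Z$ commuting with the deck group need not respect the product decomposition, so you cannot simply ``project to $\widetilde Y$'' and invoke a centralizer statement; and the assertion that the centralizer of a cocompact hyperbolic lattice in $\Homeo(\widetilde Y)$ is trivial is a statement about topological homeomorphisms, not isometries, and would itself require justification. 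In short, the workable route here is the one the paper takes: quote \cite{CWY} for aspherical examples with center and no circle action, and quote \cite{Mundet2021} for the finite cyclic actions.
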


The construction of the manifolds $X$ in the previous theorem and the proof that
they support no effective action of the circle is due to Cappell, Weinberger and Yan
\cite{CWY}. The fact that $\discsym(X)\geq 1$ was known to the authors of
\cite{CWY} (see \cite[Remark 1.3]{vL}), and a detailed proof appears in \cite[Theorem 1.11]{Mundet2021}.

\section{Bounds on the discrete degree of symmetry}
\label{s:bounds-on-discsym}

We don't know the answer to Question \ref{quest:abelian-2} in general,
but some partial positive results are available, and no counterexample has been found so far.
Before stating the positive results we introduce a new definition. An $n$-manifold $X$ is said to be {\it weak Lefschetz symplectic}
(WLS for short) if $X$ is connected, closed and orientable and in addition there exists some class $\Omega\in H^2(X;\RR)$
such that the image of the cup product map $\Lambda^*H^1(X;\RR)\otimes\RR[\Omega]\to H^*(X;\RR)$
contains $H^{n-1}(X;\RR)\oplus H^*(X;\RR)$. For example, any compact connected Kaehler manifold
is weak Lefschetz symplectic, by Lefschetz's decomposition theorem.

The following theorem combines \cite[Theorem 1.3]{Mundet2021} (for
the rationally hypertoral case) and the main result in \cite{Mundet2023-0} (for the WLS case).

\begin{theorem}
\label{thm:disc-sym-rationally-hypertoral-WLS}
Let $X$ be a closed connected $n$-dimensional manifold.
Suppose that $X$ is rationally hypertoral or WLS.
Then $\discsym(X)\leq n$. If $\discsym X=n$, then
$H^*(X;\ZZ)\simeq H^*(T^n;\ZZ)$ as rings, and the universal abelian cover of $X$
is acyclic. If
$\discsym X=n$ and the fundamental group $\pi_1(X)$ is virtually solvable, then $X\cong T^n$.
\end{theorem}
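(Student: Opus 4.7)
The plan is to fix $m := \discsym(X)$ and analyze, for arbitrarily large $r$, an effective action of $A_r := (\ZZ/r)^m$ on $X$, then distill the desired structure as $r\to\infty$. The main technical device is a rotation morphism. Since $H_*(X;\ZZ)$ is finitely generated, the induced action on $H^*(X;\ZZ)$ modulo torsion factors through a finite subgroup of $\GL(k,\ZZ)$ for $k=\sum_i b_i(X)$; by Minkowski's lemma its order is bounded independently of $r$. After replacing $A_r$ by a subgroup of bounded index, I may assume $A_r$ acts trivially on $H^*(X;\ZZ)$. Then, comparing for each $\alpha \in H^1(X;\ZZ) = [X,S^1]$ the maps $f_\alpha$ and $f_\alpha \circ g$ for $g \in A_r$ (which are homotopic since $g^*\alpha = \alpha$), one constructs a rotation morphism
$$\rho_r : A_r \longrightarrow H^1(X;\RR/\ZZ),$$
which I expect is injective on a subgroup of $A_r$ of bounded index.

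To prove $m \leq n$, the plan is to combine $\rho_r$ with the top cup product. In the rationally hypertoral case, a degree-$d$ map $f : X \to T^n$ provides classes $\alpha_1,\dots,\alpha_n \in H^1(X;\ZZ)$ with $\alpha_1\cup\dots\cup\alpha_n = d\,[X]^*$. The $n$ characters
$$\chi_j : g \longmapsto \la \rho_r(g)\cup\alpha_1\cup\dots\widehat{\alpha_j}\dots\cup\alpha_n,\; [X]\ra \in \RR/\ZZ$$
together embed $A_r$ into $(\RR/\ZZ)^n$ up to bounded kernel, forcing $m \leq n$. In the WLS case one uses products of $H^1$-classes and powers of $\Omega$ spanning $H^{n-1}(X;\RR)$ by the hypothesis on $\Omega$, and the same pairing faithfully detects $g$ up to bounded kernel. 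When $m = n$ the inequalities saturate, forcing $b_1(X)=n$ and making the cup product $\Lambda^n H^1(X;\QQ) \to H^n(X;\QQ)$ an isomorphism; a transfer argument using the $A_r$-action kills torsion in $H^*(X;\ZZ)$, and combined with Poincar\'e duality this yields $H^*(X;\ZZ)\simeq H^*(T^n;\ZZ)$ as rings.

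To show the universal abelian cover $\widetilde{X}^{\mathrm{ab}}$ is acyclic, I would use the $\rho_r$ to construct, for each large $r$, a regular finite cover $X_r\to X$ with deck group $(\ZZ/r)^n$ on which $A_r$ acts compatibly and almost freely. A Borel-construction spectral sequence bounds $\dim_\QQ H^*(X_r;\QQ)$ uniformly in $r$; since the union of the characters $\rho_r$ separates points of $H_1(X;\ZZ)\otimes\RR/\ZZ$ in the limit, these covers approximate $\widetilde{X}^{\mathrm{ab}}$, and a standard inverse-limit-plus-transfer argument upgrades uniform boundedness to $H_*(\widetilde{X}^{\mathrm{ab}};\ZZ)=0$ in positive degrees.

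Finally, suppose $\pi_1(X)$ is virtually solvable. The acyclicity of $\widetilde{X}^{\mathrm{ab}}$ combined with $H_1(X;\ZZ)\simeq\ZZ^n$ and the Cartan-Leray spectral sequence forces $X$ to be a $K(\pi_1(X),1)$; the structure theory of finitely generated virtually solvable groups, together with acyclicity and the $\ZZ^n$-abelianization, then forces $\pi_1(X)\simeq\ZZ^n$. Hence $X$ is a closed aspherical $n$-manifold with fundamental group $\ZZ^n$, and the topological rigidity of tori yields $X\cong T^n$. The hardest part, in my view, is the WLS case of the rotation-morphism bound: unlike the hypertoral case there is no canonical target torus, so one must work purely intrinsically with $\Omega$ and the cohomology ring, while ensuring that the repeated bounded-index reductions never dilute the effective rank of the detection pairing below $m$.
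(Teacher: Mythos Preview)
Your overall architecture---trivialize the action on $H^*$ via Minkowski, build the rotation morphism, and analyze covers---matches the paper, and your bound $m\le n$ in the hypertoral case is essentially the paper's Theorem~\ref{thm:rotation-hypertoral}. But the heart of the matter, the rigidity statement when $m=n$, has a genuine gap.

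The paper's order of implications is the reverse of yours: it first proves that the universal abelian cover $X_A$ (the pullback of $\RR^n\to T^n$ along $\phi_A$) is acyclic, and \emph{then} the Serre spectral sequence of $X_A\to X$ yields $H^*(X;\ZZ)\simeq H^*(T^n;\ZZ)$. Your direct route to the cohomology ring does not work: rational hypertorality only gives $b_1(X)\ge n$, and this together with a nonzero top cup product and Poincar\'e duality does not pin down the intermediate Betti numbers; nor can a transfer argument kill torsion in $H^*(X;\ZZ)$, since transfer only shows that pullback followed by pushforward is multiplication by the degree.

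More seriously, your ``Borel construction plus inverse-limit-plus-transfer'' does not prove acyclicity of $X_A$, and I do not see how to make it do so. Uniform bounds on $\dim_\QQ H^*(X_r;\QQ)$ for finite covers $X_r$ do not control $H^*(X_A;\ZZ)$: the latter is not an inverse or direct limit of the former in any useful sense, and there are $\ZZ[\ZZ^n]$-modules infinitely generated over $\ZZ$ whose finite-level invariants stay bounded (e.g.\ $\ZZ[\ZZ^n]$ itself). The paper's key input here is an algebraic result (Theorem~\ref{thm:finite-generation}): a finitely generated $\ZZ[z_1^{\pm1},\dots,z_n^{\pm1}]$-module $M$ in which each $z_j^\delta$ admits $r$-th roots in $\End M$ for arbitrarily large $r$ must be finitely generated over $\ZZ$. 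This is applied to $M=H^*(X_A;\ZZ)$; the roots are supplied by the lifts to $X_A$ of generators of $A_r$ (whose $r$-th powers are deck transformations), and an equivariant Minkowski bound (Theorem~\ref{thm:Minkowski-equivariant}) is needed to make these lifts act as $\ZZ[\ZZ^n]$-endomorphisms. Once $H^*(X_A;\ZZ)$ is finitely generated over $\ZZ$, a Serre spectral sequence argument forces it to vanish in positive degrees.

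Finally, for the WLS case the paper indicates that, beyond the rotation morphism, the weak fixed point theorem (Theorem~\ref{thm:weak-fixed-point}) is an essential additional ingredient; your sketch does not invoke it.
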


The universal abelian cover
of a path-connected space $X$ with fundamental group
$\pi$ can be identified with $X^{\ab}:=\wt{X}/[\pi,\pi]$,
where $\wt{X}$ is the universal cover of $X$. The cover $X^{\ab}$ has a residual
action of $H_1(X;\ZZ)\simeq \pi/[\pi,\pi]$, and the orbit map of this
action is a principal $H_1(X;\ZZ)$-bundle $\pi:X^{\ab}\to X$.
If $H_1(X;\ZZ)$ is torsion free and
$X$ has the homotopy type of a CW complex, one may
describe $\pi:X^{\ab}\to X$ as follows.
Take a map $\phi:X\to T^r$
such that $\phi^*:H^1(T^r;\ZZ)\to H^1(X;\ZZ)$ is an isomorphism.
($\phi$ exists by the assumption on the homotopy type of $X$.) Then
$X^{\ab}\to X$ is homeomorphic to the pullback via $\phi$ of the $\ZZ^r$-bundle $\RR^r\to\RR^r/\ZZ^r=T^r$.

\begin{remark}
The statement of the previous theorem is slightly redundant. Indeed, if the universal
abelian cover $X^{\ab}$ of an path-connected space $X$ is acyclic then $H_1(X;\ZZ)$, which
acts freely on $X^{\ab}$, cannot have nontrivial torsion, by Smith's fixed point theorem for
actions of $\ZZ/p$ ($p$ prime) on acyclic manifolds (see e.g. \cite[Chap II, Corollary 4.6]{Bo}).
Hence $X$ can be identified with
with a quotient $X^{\ab}/\ZZ^r$, where $\ZZ^r$ acts freely and proper discontinuously on
$X^{\ab}$. The projection $X^{\ab}\times_{\ZZ^r}\RR^r\to X^{\ab}/\ZZ^r$ is a homotopy
equivalence (it is a locally trivial fibration with fibers homeomorphic to $\RR^r$),
while applying Serre's spectral sequence to the projection $X^{\ab}\times_{\ZZ^r}\RR^r\to\RR^r/\ZZ^r$
we conclude that $H^*(X;\ZZ)\simeq H^*(T^r;\ZZ)$. If $X$ is a closed manifold, this can only
happen if $X$ is connected and $r=\dim X$.
\end{remark}

The following theorem combines Theorems 1.3 and 1.4 in \cite{Mundet2022}.

\begin{theorem}
\label{thm:discsym-chi-nonzero-or-sphere}
Let $X$ be a closed connected $n$-dimensional manifold.
If $\chi(X)\neq 0$ then $\discsym(X)\leq [n/2]$. If
$H^*(X;\ZZ)\simeq H^*(S^n;\ZZ)$ then $\discsym(X)\leq [(n+1)/2]$.
\end{theorem}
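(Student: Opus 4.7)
The plan is to reduce to actions of elementary abelian $p$-groups for large odd primes and then extract codimension bounds for fixed sets from Borel--Smith theory together with a rotation morphism argument. Since $m=\discsym(X)$ means that $X$ supports effective actions of $(\ZZ/r)^m$ for arbitrarily large $r$, by passing to the $p$-primary subgroup for a large odd prime $p$ dividing $r$ I may and will work with an effective action of $A=(\ZZ/p)^m$ on $X$, where $p$ is chosen larger than $|\chi(X)|$.

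The central technical input is the following codimension bound: if $F\subseteq X^A$ is a connected component, then
\[ 2m \;\leq\; n-\dim F, \]
where $\dim F$ is the dimension of $F$ as an $\FF_p$-cohomology manifold (which it is, by Borel). To establish this I would apply Borel's formula
\[ n-\dim F \;=\; \sum_{H}(\dim F_H-\dim F), \]
where $H$ ranges over the hyperplanes of $A$ and $F_H$ denotes the component of $X^H$ containing $F$, noting that each summand is even for $p$ odd. Effectivity of $A$ combined with Newman's theorem (no nontrivial element of $A$ may fix a whole neighborhood of $F$) implies that the set of hyperplanes $H$ with $\dim F_H>\dim F$ has trivial intersection in $A$, hence must contain at least $m$ elements; each contributes at least $2$ to the sum, and the bound follows. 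Transferring this argument from the smooth category (where it is a trivial slice-representation count, using that nontrivial real irreducible representations of $(\ZZ/p)^m$ have dimension $2$ for $p$ odd) to the topological category, by way of an appropriate topological rotation morphism $A\to\GL_{n-\dim F}(\FF_p)$, is the main obstacle of the proof.

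Granting the codimension bound, the first part of the theorem is immediate: Smith's Euler-characteristic identity $\chi(X^A)\equiv\chi(X)\pmod{p}$ together with $p>|\chi(X)|$ forces $X^A\neq\emptyset$, and the bound then gives $m\leq(n-\dim F)/2\leq [n/2]$.

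For the homology-sphere case, if $n$ is even then $\chi(X)=2\neq0$ reduces to the first part and yields $m\leq n/2=[(n+1)/2]$. If $n$ is odd, choose a subgroup $A_0\leq A$ of maximal rank with $X^{A_0}\neq\emptyset$. Maximality implies that for every $a\in A\setminus A_0$ one has $(X^{A_0})^a = X^{\langle A_0,a\rangle}=\emptyset$, so $A/A_0$ acts freely on the $\FF_p$-cohomology sphere $X^{A_0}$; Smith's classical theorem that elementary abelian $p$-groups can act freely on mod-$p$ spheres only in rank at most $1$ then forces $\rank A_0\in\{m-1,m\}$ (the degenerate case $A_0=\{0\}$ being absorbed in the bound $m\leq 1\leq[(n+1)/2]$). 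In either subcase, applying the codimension bound to the action of $A_0$ on $X$ at a component of $X^{A_0}$ of dimension $d$, and using that $n-d$ is even (so $d$ is odd and therefore $d\geq 1$), yields $2\,\rank(A_0)\leq n-d\leq n-1$, whence $m\leq\rank(A_0)+1\leq(n+1)/2=[(n+1)/2]$.
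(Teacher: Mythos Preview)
Your reduction in the first paragraph to a large odd prime $p$ is not justified, and this is the genuine gap. The hypothesis $\discsym(X)=m$ only furnishes a sequence $r_i\to\infty$ with effective $(\ZZ/r_i)^m$-actions; nothing prevents every $r_i$ from being a power of $2$ (compare the case distinction made in the proof of Theorem~\ref{thm:toric-finite-subgroup}). In that event you only obtain effective actions of $(\ZZ/2)^m$, and both pillars of your argument collapse: the congruence $\chi(X^A)\equiv\chi(X)\pmod p$ no longer forces $X^A\neq\emptyset$ once $2\mid\chi(X)$, and Borel's formula loses its factor of two because involutions can have odd-codimensional fixed sets, yielding only $m\leq n-\dim F$ rather than $2m\leq n-\dim F$. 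The same issue undermines your homology-sphere argument, whose parity claim ``$n-d$ is even'' again requires $p$ odd.

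The paper's approach (outlined around Lemma~\ref{lemma:linearising-continuous}) circumvents this by using tools valid for \emph{every} prime: a formula of Ye guarantees that any finite $p$-group acting on a manifold with $\chi\neq 0$ has a subgroup of uniformly bounded index with a fixed point, and then Dotzel--Hamrick embeds that subgroup in $\GL(n,\RR)$; since $(\ZZ/r)^m\hookrightarrow\GL(n,\RR)$ forces $m\leq[n/2]$ whenever $r\geq 3$, the case $p=2$ is handled as soon as the exponent is at least~$2$. Incidentally, the step you flag as ``the main obstacle'' --- transferring the codimension bound to continuous actions --- is in fact available via Dotzel--Hamrick and the local Smith theory in Borel's seminar; the reduction step is where your proposal actually breaks.
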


The following result, which is \cite[Theorem 1.2]{Mundet2021},
provides further evidence suggesting a positive answer to Question \ref{quest:abelian-2}.

\begin{theorem}
\label{thm:weak-bound-disc-sym}
For any closed connected $n$-manifold $X$ we have $\discsym(X)\leq [3n/2]$.
\end{theorem}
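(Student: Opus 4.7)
The plan is to reduce the problem to controlling the rank of effective actions of elementary abelian $p$-tori $A=(\ZZ/p)^m$ on $X$ for large primes $p$, and then to bound $m$ by $\lfloor 3n/2\rfloor$ using Smith theory and Borel's formula for fixed sets.

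First, by unwinding the definition of $\discsym(X)$ and applying Theorem \ref{thm:MS}, it suffices to prove that there is a prime $p_0$ (depending only on $X$) such that for every prime $p\geq p_0$ and every effective action of $A=(\ZZ/p)^m$ on $X$, one has $m\leq\lfloor 3n/2\rfloor$. The reduction uses that $(\ZZ/r)^m$ contains $(\ZZ/p)^m$ as a subgroup whenever $p\mid r$, and that the property we are trying to establish is monotone under passage to subgroups. In particular $\discsym(X)\leq k$ is equivalent to the existence of a rank bound, uniform in large primes $p$, on effective $(\ZZ/p)^m$-actions on $X$.

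Second, for such a fixed action, Smith theory guarantees that every fixed set $X^H$ ($H\leq A$) is a compact $\FF_p$-cohomology manifold of well-defined dimension $n^H$, and these dimensions satisfy Borel's formula. I would then pick a \emph{principal isotropy} $K\leq A$: a subgroup appearing as $K=A_x$ for some $x\in X$, of minimal possible order, so that its fixed set $X^K$ has maximal dimension $d=n^K$ among all isotropy fixed sets. A routine check shows that $A/K$ acts effectively on $X^K$ with trivial generic isotropy. Splitting $m=\rank(K)+\rank(A/K)$, the goal becomes to bound each factor separately in terms of $d$ and $n$.

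Third, one needs two dimension estimates. A \emph{tangential bound} on $\rank(A/K)$ is obtained from the almost-free action of the $p$-torus $A/K$ on the cohomology manifold $X^K$ of dimension $d$, by iterating Borel's formula on $X^K$ and using that the generic stabilizer is trivial. A \emph{normal bound} on $\rank(K)$ is obtained by studying the $K$-action on a neighborhood of a generic point $x\in X^K$, where $K$ fixes $x$ and acts on the $(n-d)$-dimensional transverse cohomological slice with $x$ as isolated fixed point; this also invokes Borel's formula on the slice. Adding the two estimates and optimizing the worst case over $0\leq d\leq n$ produces the bound $m\leq\lfloor 3n/2\rfloor$.

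The main obstacle is establishing the normal bound in the purely topological setting, where there is no equivariant tubular neighborhood and no smooth linearization at a fixed point. In the smooth category one linearizes $K$ at $x$, uses that $(\ZZ/p)^r$ embeds in $\O(n-d)$ for $p$ large, and obtains $\rank(K)\leq\lfloor(n-d)/2\rfloor$; combined with the tangential bound one would get an overall estimate of roughly $n$. In the topological category this representation-theoretic argument is unavailable and must be replaced by a strictly weaker cohomological estimate obtained purely from Borel's formula applied to the slice. It is exactly this weakening, and the resulting balancing of tangential versus normal contributions over all values of $d$, that produces the coefficient $3/2$ in the theorem rather than the smaller $1$ one obtains in the smooth case.
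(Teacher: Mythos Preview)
This theorem is not proved in the present paper; it is cited as \cite[Theorem 1.2]{Mundet2021}. Independently of that, your outline contains two genuine gaps.

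First, the reduction to large primes is invalid. A sequence $r_i\to\infty$ witnessing $\discsym(X)\geq m$ need not produce arbitrarily large prime divisors: one may have $r_i=p^{e_i}$ for a single fixed prime $p$ with $e_i\to\infty$ (compare the dichotomy in the proof of Theorem~\ref{thm:toric-finite-subgroup}). In that case all your subgroup argument extracts is an action of the fixed group $(\ZZ/p)^m$, and the Mann--Su bound on $m$ for that single action depends on the mod-$p$ Betti numbers of $X$ rather than on $n$, so it can exceed $\lfloor 3n/2\rfloor$. Your asserted equivalence between $\discsym(X)\leq k$ and a rank bound uniform in large primes is therefore false; the fixed-prime/large-exponent regime must be treated separately.

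Second, your splitting is vacuous as stated. You take $K=A_x$ of \emph{minimal} order; but for an effective action of a finite abelian $p$-group on a connected manifold the generic stabiliser is trivial (each $X^g$ with $g\neq 1$ is a $\ZZ/p$-cohomology manifold of dimension $<n$, hence nowhere dense, so the free locus is open and dense). Thus $K=\{1\}$, $d=n$, the ``normal'' contribution is zero, and you are back to the original problem. If you meant instead a \emph{maximal} stabiliser, then $A/K$ does act freely on $X^K$, but applying Theorem~\ref{thm:Carlsson-Baumgartner} requires the $A/K$-action on $H^*(X^K;\ZZ/p)$ to be trivial, and arranging this costs a subgroup whose index is not bounded independently of $p$. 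The argument in \cite{Mundet2021} gets around both difficulties by first invoking the stabiliser bound of Theorem~\ref{thm:CMPS}, valid for arbitrary finite $p$-groups, which supplies exactly the uniform combinatorial control on the fixed-set lattice that a raw Borel-formula/principal-isotropy approach lacks.
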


In low dimensions we have the following result, which is proved in Subsection
\ref{ss:thm:disc-sym-low-dimension}.

\begin{theorem}
\label{thm:disc-sym-low-dimension}
Let $X$ be a closed connected manifold of dimension $\leq 3$.
We have $\discsym(X)\leq\dim X$, with equality if and only if $X$ is homeomorphic
to a torus.
\end{theorem}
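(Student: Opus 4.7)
The plan is to split the argument according to $\dim X$.

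For $\dim X = 1$, the only closed connected manifold is $S^1 = T^1$, which admits all cyclic rotations, so $\discsym(S^1) = 1$ and both the bound and the equality case hold trivially.

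For $\dim X = 2$, I would run through the classification of closed surfaces. If $\chi(X) < 0$, the Hurwitz bound $|G| \leq 84(g-1)$ (and its non-orientable analogue) makes every finite group acting effectively on $X$ of uniformly bounded order, forcing $\discsym(X) = 0$. If $\chi(X) > 0$, i.e.\ $X \in \{S^2, \RR P^2\}$, Theorem \ref{thm:discsym-chi-nonzero-or-sphere} already gives $\discsym(X) \leq 1$. This leaves the $\chi = 0$ cases: $T^2$, where $\discsym = 2$ realises the equality, and the Klein bottle $K$. For $K$ I would lift an alleged $(\ZZ/r)^2$ effective action to the orientable double cover $p:T^2\to K$: the lifted group is a $\ZZ/2$-extension acting on $T^2$ and commuting with the orientation-reversing deck involution $\tau$. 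Since $\discsym(T^2) = 2$, a bounded-index subgroup of the lift consists of translations; but only a one-parameter family of translations on $T^2$ centralises $\tau$, so the lift has rank $\leq 1$ up to bounded index, giving $\discsym(K) \leq 1$.

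The dimension $3$ case is the real work. Theorem \ref{thm:weak-bound-disc-sym} only gives $\discsym(X) \leq 4$, and closed $3$-manifolds all have $\chi = 0$, so Theorem \ref{thm:discsym-chi-nonzero-or-sphere} applies only to rational homology spheres (where it yields $\leq 2$). To exclude $\discsym(X) = 4$ for a general closed $3$-manifold, my plan is a Smith-theoretic / Mann--Su argument: a hypothetical $(\ZZ/p)^4 \subset (\ZZ/r)^4$ action would impose, via the Borel spectral sequence together with Poincar\'e duality in dimension $3$, a lower bound on $\sum_i \dim_{\FF_p} H^i(X;\FF_p)$ that is incompatible with the finite generation of $H_*(X;\ZZ)$ as $p$ ranges over primes dividing $r$. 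Prime decomposition of $3$-manifolds may be needed to reduce to the irreducible case. This cohomological bookkeeping is the step I expect to be the main obstacle.

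For the equality case $\discsym(X) = 3$ in dimension $3$, I would exploit the $(\ZZ/r)^3$-action on $H^*(X;\FF_p)$ (for $p\mid r$) together with Poincar\'e duality to force $H^*(X;\FF_p)\cong H^*(T^3;\FF_p)$ for almost all $p$, and hence $H^*(X;\ZZ)\cong H^*(T^3;\ZZ)$, so that $X$ is rationally hypertoral. Theorem \ref{thm:disc-sym-rationally-hypertoral-WLS} then yields acyclicity of the universal abelian cover, and a closed $3$-manifold with the integral cohomology of $T^3$ has virtually solvable fundamental group by Perelman's geometrization, so the final clause of Theorem \ref{thm:disc-sym-rationally-hypertoral-WLS} delivers $X\cong T^3$.
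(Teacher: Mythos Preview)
Your treatment of dimensions $1$ and $2$ is essentially fine, though note that the Hurwitz bound applies to conformal automorphisms; for continuous actions you need K\'er\'ekj\'art\'o's theorem to conjugate into a conformal group before invoking it.

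The genuine problems are in dimension $3$, where your route diverges from the paper's and has two concrete gaps.

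\textbf{The bound $\discsym(X)\le 3$.} Your Mann--Su/Borel spectral sequence sketch cannot close the argument. The Mann--Su inequality bounds the rank of an effective $(\ZZ/p)^k$-action only by the sum of the mod-$p$ Betti numbers; for a closed orientable $3$-manifold with $b_1\ge 1$ this sum is $2+2b_1\ge 4$, so nothing in that inequality excludes $k=4$, and there is no sharper purely Smith-theoretic bound in the topological category that brings the answer down to the dimension. You flag this as ``the main obstacle'', and it is exactly where the paper goes a different way: it uses Moise's theorem together with Pardon's smoothing result to conclude $\discsym(X)=\discsym_{\smooth}(X)$, and then invokes Theorem~\ref{thm:smooth-disc-sym-low-dimension}, whose proof relies on linearising a smooth action at a fixed point (Lemma~\ref{lemma:linearising-smooth}) --- a tool not available for bare continuous actions.

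\textbf{The equality case.} Your claim that ``a closed $3$-manifold with the integral cohomology of $T^3$ has virtually solvable fundamental group by Perelman's geometrization'' is false. Take $X=T^3\sharp\Sigma$ with $\Sigma$ a hyperbolic integral homology $3$-sphere: then $H^*(X;\ZZ)\cong H^*(T^3;\ZZ)$ as rings and the universal abelian cover of $X$ is acyclic, yet $\pi_1(X)\cong\ZZ^3*\pi_1(\Sigma)$ contains a nonabelian free subgroup and is certainly not virtually solvable. So the final clause of Theorem~\ref{thm:disc-sym-rationally-hypertoral-WLS} does not apply without further input, and your argument stalls. The paper avoids this entirely: once in the smooth category (again via Moise--Pardon), Zimmermann's argument produces an effective $S^1$-action on $X$ from the existence of arbitrarily large finite symmetry; Lemma~\ref{lemma:free-actions-3-manifold} forces this circle action to be free, and a short Gysin-sequence computation then gives $X\cong T^2\times S^1\cong T^3$. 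No hypothesis on $\pi_1$ is needed.
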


It is an interesting problem to compute or estimate the discrete degree of symmetry.
In dimensions $\leq 2$ this is easy using standard tools, but in higher dimensions this
is substantially more challenging. Here are some examples.

\begin{example}
\label{ex:computation-discsym}
\begin{enumerate}
\item $\discsym(S^1)=1$ (this is elementary).
\item $\discsym(T^2)=2$,
$\discsym(S^2)=\discsym(\RP^2)=\discsym(\RP^2\sharp\RP^2)=1$,
$\discsym((T^2)^{\sharp g})=0$ if $g\geq 2$,
and $\discsym((\RP^2)^{\sharp g})=0$ if $g \geq 3$. This follows from the fact that
any continuous finite group action on a closed surface is conjugate to a conformal
transformation for a conveniently chosen conformal structure
(K\'er\'ekjart\'o's theorems, see
e.g. \cite{ConstantinKolev} and especially the Remark at the end of
\cite{ConstantinKolev}), together with the arguments in the proof
of \cite[Theorem 1.3]{Mundet2010}.
\item Fix natural numbers $k,n$ satisfying $1\leq k\leq n-1$.
Let $\sigma:T^n\to T^n$ be the free involution defined by
$\sigma(x_1,\dots,x_n)=(x_1+1/2,\dots,x_{k}+1/2,-x_{k+1},\dots,-x_n)$.
Then $\discsym(T^n/\sigma)=k$. See \cite[Theorem 1.13]{Mundet2021}. (For example,
setting $n=2$ and $k=1$ we get the Klein bottle $\RP^2\sharp\RP^2$.)
\item Let $Z$ be a closed and connected $m$-manifold such that
$H_*(Z;\ZZ)\not\simeq H_*(S^m;\ZZ)$. For every $n\geq 0$ we have
$\discsym(T^n\times (T^m\sharp Z))=n$. See \cite[Theorem 1.5]{Mundet2021}.
\item $\discsym(S^n)=[(n+1)/2]$. This follows from Theorem \ref{thm:discsym-chi-nonzero-or-sphere}
and the fact that the torus $T^{[(n+1)/2]}$ acts effectively on $S^n$.
\end{enumerate}
\end{example}

\newcommand{\free}{\operatorname{free}}
\newcommand{\primenumber}{\operatorname{prime}}
\newcommand{\stablerank}{\operatorname{stable-rank}}
\newcommand{\stablefreerank}{\stablerank^{\free}}

A notion related to the discrete degree of symmetry is the stable rank.
Let $X$ be a  manifold. Let
$\mu_{\primenumber}(X)$ be the set of all natural numbers $m$ such that
$X$ supports effective actions of $(\ZZ/p)^m$ for arbitrarily large primes $p$.
Define the {\it stable rank of $X$} to be $\stablerank(X):=\max(\{0\}\cup\mu_{\primenumber}(X))$.
We obviously have $\stablerank(X)\leq\discsym(X)$, so a weaker version of Question \ref{quest:abelian-2}
would ask whether, for a closed and connected manifold $X$, we necessarily
have $\stablerank(X)\leq\dim X$, and a stronger version of Question \ref{quest:abelian-2-2}
would be whether the case of equality only occurs for $X=T^n$.

The discrete degree of symmetry is an analogue for actions of finite groups
of the degree of symmetry of a manifold $X$. The latter is defined
to be the maximum of the dimensions of the compact Lie groups acting effectively on $X$,
and it has been extensively studied in the literature. See for example \cite[Chap. VII, \S 2]{Hsiang}.

\section{Abelian group actions preserving geometric structures}
\label{s:abelian-smooth}

\newcommand{\smooth}{\operatorname{smooth}}

We may consider analogues of the discrete degree of symmetry
defined in terms of actions of $(\ZZ/r)^m$
preserving some geometric structure.
This leads for example to the smooth, symplectic or holomorphic
discrete degree of symmetry.

Denoting by $\discsym_{\smooth}(X)$ the smooth discrete degree of symmetry
of a smooth manifold $X$, we have the following result, which is proved in
Section \ref{ss:thm:smooth-disc-sym-low-dimension}.

\begin{theorem}
\label{thm:smooth-disc-sym-low-dimension}
Let $X$ be a closed connected smooth $n$-manifold, where $n\leq 4$. We have
$\discsym_{\smooth}(X)\leq n$. If $\discsym_{\smooth}(X)=n$ then $H^*(X;\ZZ)\simeq H^*(T^n;\ZZ)$
and the universal abelian cover of $X$ is acyclic.
If $\discsym_{\smooth}(X)=n$ and $\pi_1(X)$ is virtually solvable then $X\cong T^n$.
\end{theorem}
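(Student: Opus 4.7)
The plan is to handle dimensions $n\leq 3$ and $n=4$ separately.

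For $n\leq 3$, the statement is immediate: every smooth action is continuous, so $\discsym_{\smooth}(X)\leq \discsym(X)$, and Theorem \ref{thm:disc-sym-low-dimension} gives $\discsym(X)\leq n$ with equality forcing $X\cong T^n$, which in turn implies the cohomological and universal abelian cover statements tautologically.

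For $n=4$ the core is the bound $\discsym_{\smooth}(X)\leq 4$. If $\chi(X)\neq 0$, Theorem \ref{thm:discsym-chi-nonzero-or-sphere} already gives $\discsym(X)\leq 2<4$, so I would assume $\chi(X)=0$. Let $A=(\ZZ/p)^r$ act smoothly and effectively on $X$ for an arbitrarily large prime $p$; the goal is to show $r\leq 4$. The key new ingredient beyond what Theorem \ref{thm:weak-bound-disc-sym} provides is local linearization: by the slice theorem for smooth compact group actions, at any $x\in X$ the stabilizer $A_x$ embeds into $GL(T_xX)\cong GL(4,\RR)$, so for $p$ odd one has $\rank A_x\leq 2$, and each fixed set $X^H$ (for $H\leq A$) is a smooth submanifold of even codimension. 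I would combine this with (i) the fact that for $p$ larger than $|\Aut H^*(X;\FF_p)|$, a quantity bounded in terms of the rational Betti numbers of $X$, the action of $A$ on mod-$p$ cohomology is trivial, and (ii) Borel's localization theorem together with Hsiang's inequality $\sum_i\dim H^i(X^A;\FF_p)\leq \sum_i\dim H^i(X;\FF_p)$. Running a descending induction along a flag $1=A_0\subset A_1\subset\cdots\subset A_r=A$ of rank-one extensions, where at each step $A/A_i$ acts smoothly on the smooth submanifold $X^{A_i}$, the dichotomy between the total-fixed-point case (where $A$ linearizes into $GL(4,\RR)$ and $r\leq 4$ is immediate) and the fixed-point-free case (where $\chi(X)=0$ combined with a Browder-type rigidity for free $(\ZZ/p)^r$-actions with trivial mod-$p$ cohomology action on a finite CW complex pins down $r$) yields $r\leq 4$.

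For the equality case $\discsym_{\smooth}(X)=4$ I would follow the strategy behind the hypertoral half of Theorem \ref{thm:disc-sym-rationally-hypertoral-WLS}: degeneration of the Borel spectral sequence, combined with a transfer argument, identifies $H^*(X;\ZZ)$ with $H^*(T^4;\ZZ)$ and establishes the acyclicity of the universal abelian cover $X^{\ab}$. For the last clause, virtual solvability of $\pi_1(X)$ together with $H_1(X;\ZZ)\simeq \ZZ^4$ forces $\pi_1(X)$ to be virtually $\ZZ^4$; combined with acyclicity of $X^{\ab}$ this realizes $X$, up to finite cover, as a closed aspherical $4$-manifold with $\pi_1\cong \ZZ^4$, whereupon the Borel conjecture for $\ZZ^4$ (Farrell--Jones) gives $X\cong T^4$.

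The main obstacle I anticipate is the sharp rank bound $r\leq 4$ in dimension four: the weaker topological bound $[3n/2]=6$ of Theorem \ref{thm:weak-bound-disc-sym} is insufficient, and passing from $6$ to $4$ relies essentially on smoothness, both for the local linear rigidity at stabilizers and for maintaining the inductive reduction on fixed submanifolds within the smooth category. The most delicate step is the fixed-point-free case with $\chi(X)=0$, where the bound does not come from any single tangent representation but must be extracted from a global cohomological rigidity of free $(\ZZ/p)^r$-actions with prescribed mod-$p$ cohomology action.
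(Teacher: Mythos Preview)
Your reduction for $n\leq 3$ is circular: in this paper, Theorem~\ref{thm:disc-sym-low-dimension} is \emph{deduced from} Theorem~\ref{thm:smooth-disc-sym-low-dimension} (via Moise's theorem and Pardon's smoothing result, which give $\discsym(X)=\discsym_{\smooth}(X)$ in dimension~$3$), not the other way round. So you cannot invoke it here; the $3$-dimensional smooth case has to be proved directly. The paper does this either by rerunning the $4$-dimensional argument one dimension down, or by applying the $4$-dimensional case to $X\times S^1$.

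For $n=4$ your outline has some of the right ingredients --- local linearization at fixed points and a separate treatment of free actions --- but the dichotomy you state (``total fixed point'' versus ``fixed-point-free'') is not exhaustive, and your flag induction does not clearly bridge the gap. The paper's argument is organized differently. First one reduces (via \cite[Lemma 2.1]{Mundet2021}) to orientation-preserving actions on an orientable $X$, and to $r_i$ all prime or all powers of a fixed prime. Then either every $(\ZZ/r_i)^m$-action is free, in which case Theorems~\ref{thm:abelian-free-actions} and~\ref{thm:abelian-free-actions-tori} give $m\leq 4$ together with the full equality statement; or some nontrivial subgroup $\Gamma_i$ has nonempty fixed set. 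A component $Y_i\subseteq X^{\Gamma_i}$ is then $0$- or $2$-dimensional (here orientability is used), and in the $2$-dimensional case one restricts the $G_i$-action to the surface $Y_i$, uses Lemma~\ref{lemma:restriction-to-surface} to find a subgroup of rank $m-2$ in the kernel of that restriction, and linearizes \emph{that} subgroup in the $2$-dimensional normal direction to get $m-2\leq 1$, hence $m\leq 3$. Thus whenever stabilizers are nontrivial one already has $m\leq 3$, so the equality case $m=4$ is automatically free and the cohomological and virtually-solvable conclusions come straight from Theorem~\ref{thm:abelian-free-actions-tori}, not from a Borel spectral-sequence degeneration. You also omit the non-orientable case, which the paper handles separately by lifting to the orientation double cover and using the rotation morphism (Lemma~\ref{lemma:non-trivial-involution} together with Theorems~\ref{thm:Minkowski} and~\ref{thm:rotation-hypertoral}) to rule out $\discsym_{\smooth}(X)=4$ altogether.
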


An interesting question is to find manifolds for which one can define
any of these notions and obtain a smaller value than the (continuous) discrete degree of symmetry.
(The question is probably substantially more difficult for smooth or symplectic structures than
for holomorphic ones.)

\begin{question}
For a manifold $X$, let $\sigma$ denote a smooth, symplectic or holomorphic structure
on $X$, and let $\discsym_{\sigma}(X)$ denote the discrete degree of symmetry defined
considering only actions of groups that preserve $\sigma$. Do there exist examples
of $X$ and $\sigma$ for which $\discsym_{\sigma}(X)<\discsym(X)$?
\end{question}

This does not seem to have been addressed so far in the literature. Some discrepancy
between continuous and smooth finite group actions has been pointed out in \cite[Theorem 1.5]{Mundet2021},
related to the existence of exotic smooth structures on tori, but this discrepancy does not give
any example of a manifold for which the smooth and the continuous discrete degrees of symmetries
differ. In contrast, there exist results on the analogue for actions of compact connected Lie groups: Amir Assadi and Dan Burghelea proved in \cite{AssadiBurghelea} that if
$\Sigma$ is an $n$-dimensional exotic sphere then $T^n\sharp\Sigma$ does not support any smooth
action of the circle, whereas it does support an effective continuous action of $T^n$, as it is homeomorphic to $T^n$ itself  (see also the references in \cite{AssadiBurghelea} for earlier
examples).

\newcommand{\Hamdiscsym}{\operatorname{Ham-disc-sym}}

If $X$ is a symplectic manifold, one can define
$\mu_{\Ham}(X)$ to be the set of all natural numbers $m$ such that the group
$\Ham(X)$ of Hamiltonian diffeomorphisms of $X$ contains subgroups
isomorphic to $(\ZZ/r)^m$ for arbitrarily large integers $r$,
and then define accordingly the {\it Hamiltonian discrete degree of symmetry of $X$} to be
$\discsym_{\Ham}(X):=\max(\{0\}\cup\mu_{\Ham}(X))$.
It follows from \cite[Corollary 1.7]{Mundet2022} and
Lemma \ref{lemma:linearising-smooth} below that if $X$ is compact then
$\discsym_{\Ham}(X)\leq \dim X/2$. The same considerations that lead to Question
\ref{quest:abelian-2} suggest the following.

\begin{question}
Let $X$ be a compact symplectic manifold. If $\discsym_{\Ham}(X)=\dim X/2$,
does $X$ necessarily support a structure of toric manifold?
\end{question}

Compact toric manifolds admit cell decompositions all of whose cells are even dimensional,
and consequently their integral cohomology is free as a $\ZZ$-module and concentrated in
even degrees. With this in mind, the following result provides evidence for the previous
question.

\begin{theorem}
\label{thm:toric-finite-subgroup}
If a compact symplectic manifold $X$ satisfies $\discsym_{\Ham}(X)=\dim X/2$, then
$H^*(X;\ZZ)$ is free as a $\ZZ$-module and $H^k(X;\ZZ)=0$ for odd $k$.
\end{theorem}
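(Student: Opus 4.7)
Set $n:=\dim X/2$. By hypothesis there exist arbitrarily large primes $p$ and Hamiltonian embeddings $(\ZZ/p)^n\hookrightarrow\Ham(X)$; write $G_p$ for the image. The plan is to show that for every sufficiently large such $p$ the $\FF_p$-cohomology of $X$ is concentrated in even degrees, and then to pass to integral coefficients by varying $p$.

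First I would analyse the fixed point set $X^{G_p}$. The machinery behind \cite[Corollary 1.7]{Mundet2022} together with Lemma \ref{lemma:linearising-smooth} produces, for $p$ large, a fixed point $x\in X$ of a subgroup of $G_p$ of index bounded independently of $p$; since the rank of $G_p$ already saturates the bound proved there, one can arrange that $x$ is fixed by all of $G_p$. Linearisation at $x$ gives a faithful symplectic representation $G_p\hookrightarrow \operatorname{Sp}(T_x X)$, and a choice of $G_p$-invariant compatible complex structure embeds this into $\U(n)$. Because $\rank G_p=n=\rank\U(n)$ and $p$ is large, $G_p$ is conjugate to a subgroup of the standard maximal torus generated by $n$ characters that jointly separate points; consequently every weight is nontrivial, so $x$ is isolated, and the same argument at any other fixed point shows that $X^{G_p}$ is a finite set.

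Next, every element of $\Ham(X)$ is isotopic to the identity, so $G_p$ acts trivially on $H^*(X;\FF_p)$. Combined with the finiteness of $X^{G_p}$ and with $G_p$ being a $p$-torus, Smith theory yields
$$|X^{G_p}|=\dim_{\FF_p}H^*(X^{G_p};\FF_p)\le \dim_{\FF_p}H^*(X;\FF_p).$$
The core step is to upgrade this to an equality (equivariant formality of $G_p$ over $\FF_p$) and then to deduce vanishing in odd degrees. For the equality I would exploit the Hamiltonian hypothesis via an Arnold--Floer type lower bound: the number of fixed points of a generic Hamiltonian diffeomorphism of $X$ is at least $\dim_{\FF_p}H^*(X;\FF_p)$, and a generic element $g\in G_p$ realises this bound; Smith containment $X^{G_p}\supseteq X^g\cap X^{G_p}$ together with the isolated/nondegenerate nature of the local model then pins down $|X^{G_p}|=\dim_{\FF_p}H^*(X;\FF_p)$. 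For the vanishing in odd degrees I would appeal to the local picture: at each isolated fixed point the $G_p$-representation on the tangent space is the $p$-torsion of a standard $T^n$-action on $\CC^n$, so in the Borel spectral sequence for $X\to X_{G_p}\to BG_p$ each fixed point can only contribute to $H^{\text{even}}$.

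Finally, once $H^{\text{odd}}(X;\FF_p)=0$ is established for infinitely many $p$, the universal coefficient theorem together with the finite generation of $H^*(X;\ZZ)$ (since $X$ is compact) gives $H^{\text{odd}}(X;\ZZ)=0$ and the absence of torsion in $H^{\text{even}}(X;\ZZ)$. The main obstacle, to my mind, is the equivariant-formality upgrade in the third paragraph: because $G_p$ is discrete there is no moment map with which to run Morse theory \emph{\`a la} Kirwan, so one must import formality either through Floer-theoretic fixed-point counts with $\FF_p$-coefficients, or by carefully transferring the local toric structure at each fixed point into a global statement about the Borel construction.
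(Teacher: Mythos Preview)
Your proposal is close in spirit to the paper's argument but has two real gaps, and you miss the single step that the paper uses to close both of them at once.

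First, the hypothesis $\discsym_{\Ham}(X)=n$ only gives subgroups $(\ZZ/r)^n\leq\Ham(X)$ for arbitrarily large \emph{integers} $r$, not for arbitrarily large primes. After the standard dichotomy (either infinitely many primes occur among divisors of the $r_i$, or all $r_i$ are powers of one fixed prime $p$), your final step ``$H^{\mathrm{odd}}(X;\FF_p)=0$ for infinitely many $p$ $\Rightarrow$ integral statement'' is only available in the first case. In the second case your Smith-theoretic argument yields information only modulo that single prime $p$, which is not enough.

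Second, your mechanism for odd vanishing --- localisation in the Borel spectral sequence and the assertion that ``each fixed point can only contribute to $H^{\mathrm{even}}$'' --- is not a standard fact for finite $p$-tori and would itself require a proof; as you yourself flag, this is where the argument is shakiest.

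The paper dissolves both problems with the Lefschetz fixed point theorem, which you never invoke. Once one has a single element $g\in G$ with $X^g=X^G$ finite (the paper obtains this from \cite[Theorem 2.3]{Mundet2022}), the eigenvalues of $D_xg$ at each $x\in X^g$ are nontrivial roots of unity occurring in conjugate pairs, so $\det(1-D_xg)>0$ and $g$ is a nondegenerate Hamiltonian diffeomorphism. Lefschetz then gives $|X^g|=\chi(X)$. Now the integral Arnold conjecture applied to this one diffeomorphism $g$ yields, for \emph{every} prime $p$,
\[
\sum_k \dim_{\FF_p} H^k(X;\FF_p)\;\le\;|X^g|\;=\;\chi(X)\;=\;\sum_k(-1)^k\dim_{\FF_p} H^k(X;\FF_p),
\]
which forces $H^{\mathrm{odd}}(X;\FF_p)=0$ and $\dim_{\FF_p}H^*(X;\FF_p)=\chi(X)$ for all $p$ simultaneously. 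This replaces your Smith upper bound and your spectral-sequence step in one stroke, and sidesteps the ``which primes'' issue entirely because the inequality is being applied to a fixed $g$, not to a varying $G_p$.
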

\begin{proof}
Let $X$ be a $2n$-dimensional compact symplectic manifold.
We are going to use the solution to the integral Arnold conjecture
(see \cite[Corollary 1.2]{AbouBlum}, \cite[Theorem A]{BaiXu}, or \cite[Theorem 1]{Rezchikov}),
which implies that for every prime $p$ the number of fixed points of a nondegenerate
Hamiltonian diffeomorphism is bounded below by $\dim_{\ZZ/p}H^*(X;\ZZ/p)$. This implies
that every Hamiltonian diffeomorphism has some fixed point (note that
the rational Arnold conjecture is enough for this implication).

Suppose that $\discsym_{\Ham}(X)=n$.
Then there is a sequence of integers $r_i\to\infty$ and, for each $i$, a subgroup
of $\Ham(X)$ isomorphic to $(\ZZ/r_i)^n$.
Let $\pP=\{p\text{ prime}\mid p\text{ divides $r_i$ for some $i$}\}$.
We distinguish two possibilities. If $\pP$ is infinite, then we can take a sequence
of primes $p_j$ belonging to $\pP$ and satisfying $p_j\to\infty$. Each
$p_j$ divides $r_{i_j}$ for some $i_j$, so $(\ZZ/p_j)^m$ is isomorphic to
a subgroup of $(\ZZ/r_{i_j})^m$ and hence it is also isomorphic to a subgroup of $\Ham(X)$.
The second possibility is that $\pP$ is bounded. In that case, there exists some
$p\in\pP$ and a sequence of natural numbers $e_j\to\infty$ such that $p^{e_j}$
divides $r_{i_j}$ for some $i_j$. Arguing as before, this gives a subgroup of $\Ham(X)$
isomorphic to $(\ZZ/p^{e_j})^m$ for each $j$. In conclusion, we may assume that there
is a sequence of integers $r_i\to\infty$, such that either each $r_i$ is a prime or
each $r_i$ is of the form $p^{e_i}$ for some fixed prime $p$, and
for each $i$ there is a subgroup of $\Ham(X)$ isomorphic to $(\ZZ/r_i)^n$.

By \cite[Theorem 2.3]{Mundet2022} and \cite[Lemma 2.1]{Mundet2021} we may assume
the existence of a prime power $r\geq 4$, a subgroup $G$ of $\Ham(X)$ isomorphic to
$(\ZZ/r)^n$, and an element $g\in G$ satisfying $X^{g}=X^{G}$. Since $X^g$ is nonempty,
so is $X^G$. Applying Lemma \ref{lemma:linearising-smooth} to each point in $X^G$,
and using the fact that if $m<2n$ then $\GL(m,\RR)$ has no subgroup isomorphic to
$(\ZZ/r)^n$, we conclude that all points in $X^G=X^g$ are isolated. If $x\in X^g$ then we denote
by $D_xg:T_xX\to T_xX$ the differential at $x$ of $g\in\Ham(X)$.
Since $g$ has finite order, all eigenvalues of $D_xg$ are roots of unity,
and since $x$ is isolated
in $X^g$, none of the eigenvalues of $D_xg$ is equal to $1$. This implies that $g$ is
a non degenerate Hamiltonian diffeomorphism. Finally, if $\alpha\in\CC\setminus\RR$ is an eigenvalue of $D_xg$, then so is $\ov{\alpha}$. All this implies that $\det(1-D_xg)>0$. By
Lefschetz's fixed point theorem, it follows that $\chi(X)=|X^g|$.
Applying the integral Arnold conjecture to $g$ we conclude that, for every prime $p$,
$$\dim_{\ZZ/p}H^*(X;\ZZ/p)\leq|X^g|=\chi(X)=\sum_k (-1)^k\dim_{\ZZ/p}H^k(X;\ZZ/p).$$ This implies that $H^k(X;\ZZ/p)=0$ for odd $k$ and that $\dim_{\ZZ/p}H^*(X;\ZZ/p)=\chi(X)$, which is independent of $p$. The theorem now follows from
the universal coefficients theorem and the fact that $H^*(X;\ZZ)$ is finitely generated.
\end{proof}

To conclude this section, let us mention that
the analogues of Question \ref{quest:abelian-2} and Question \ref{quest:abelian-2-2}
for birational transformation groups have been
answered in the affirmative by Aleksei Golota in \cite{Golota}.

\section{Free actions of finite abelian groups}
\label{s:free-abelian}

For any closed manifold $X$ we define the {\it discrete degree of free symmetry},
which we denote by $\discsym^{\free}(X)$, following the same recipe as for
the discrete degree of symmetry but considering only free actions of finite
abelian groups. Namely, we let $\mu^{\free}(X)$ denote the set of natural numbers
$m$ such that $X$ supports a free action of $(\ZZ/r)^m$ for arbitrary large integers
$r$, and we define
$$\discsym^{\free}(X):=(\max\{0\}\cup\mu^{\free}(X)).$$
Of course one always has $\discsym^{\free}(X)\leq\discsym(X)$.
The following theorem can be seen as a refinement of Mann--Su's theorem for
free actions. It was originally proved by Gunnar Carlsson \cite{Carlsson}
for $p=2$ and by Christoph Baumgartner \cite{Baumgartner}
for odd $p$ (see also \cite[Theorem 1.4.14]{AP}).

\begin{theorem}
\label{thm:Carlsson-Baumgartner}
Let $p$ be a prime and let $X$ be a paracompact topological space on which
$(\ZZ/p)^m$ acts freely and trivially on $H^*(X;\ZZ/p)$.
Suppose there exists some $i_0\in\NN$ such that $H^i(X/G;\ZZ/p)=0$
for all $i\geq i_0$. Then $m\leq |\{j\mid H^j(X;\ZZ/p)\neq 0\}|$.
\end{theorem}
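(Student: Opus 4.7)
My plan is to study the Leray--Serre spectral sequence of the Borel fibration $X \hookrightarrow X_G \to BG$, and to exploit the resulting module structure over the cohomology of $BG$ to bound $m$ by $k := |\{j : H^j(X;\FF_p) \neq 0\}|$. Setting $G = (\ZZ/p)^m$, $R := H^*(BG;\FF_p)$, and $R_0 \subseteq R$ the polynomial subring (the whole $R$ when $p = 2$, and $\FF_p[y_1,\ldots,y_m]$ with $|y_i|=2$ when $p$ is odd), the ring $R_0$ has Krull dimension $m$. The Borel construction $X_G := EG \times_G X$ retracts onto $X/G$ since the action is free, so by hypothesis $H^*_G(X;\FF_p) \cong H^*(X/G;\FF_p)$ is $\FF_p$-finite-dimensional, hence $R_0$-torsion.

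The spectral sequence, combined with the triviality of the $G$-action on cohomology, takes the form $E_2^{s,t} = R^s \otimes_{\FF_p} M^t \Rightarrow H^{s+t}_G(X;\FF_p)$, where $M := H^*(X;\FF_p)$ has nonzero rows indexed by $J := \{t : M^t \neq 0\}$, with $|J| = k$. A bidegree check shows that the polynomial generators of $R_0$ admit no possibly nontrivial $d_r$-image (the target row $1-r$ is negative for $r \geq 2$), hence are permanent cocycles; and since they have even total degree, the Leibniz rule makes each page $E_r$ into a graded $R_0$-module with $R_0$-linear differential. The $E_2$-page is $R_0$-free of positive rank (as $X \neq \emptyset$ forces $M^0 \neq 0$), hence has Krull dimension $m$, while $E_\infty$, being a subquotient of the $R_0$-torsion abutment, has Krull dimension $0$.

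To close the argument, I would show that the drop of Krull dimension from $m$ to $0$ forces $k \geq m$. The $R_0$-module $E_\infty^{*,0}$ is a quotient of $E_2^{*,0} = R \otimes M^0$ by the submodule generated by the cumulative images of the differentials $d_r : E_r^{*-r,\,r-1} \to E_r^{*,0}$, and such a $d_r$ can be nontrivial only when $r - 1 \in J$. The cleanest route is to compare Hilbert--Poincar\'e series: the identity $P_{E_2}(u) - P_{E_\infty}(u) = (1+u) Q(u)$ (with $Q$ a formal power series with nonnegative coefficients, encoding the boundary/kernel pairs killed by the successive differentials), combined with the factorisation $P_{E_2}(u) = P_M(u) P_R(u)$ that produces a pole of order $m$ at $u = 1$ from the factor $(1-u)^{-m}$ in $P_R$, forces $Q(u)$ to have a pole of order at least $m$ at $u = 1$. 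One then argues that this pole order is bounded above by $k$, since $Q(u)$ accumulates contributions only from differentials whose source rows lie in $J$.

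The principal obstacle lies in rigorously bounding the pole order of $Q(u)$ at $u = 1$ by $k$, which is where the technical heart of the arguments of Carlsson and Baumgartner is concentrated. For $p = 2$ this can be handled directly at the level of ideals in $R_0$, by counting the generators of the ``killed ideal''; for odd $p$ one must in addition track the contribution of the exterior part $\Lambda(x_1,\ldots,x_m) \subseteq R$, either by filtering the spectral sequence through the polynomial subring first, or by a Koszul-type comparison that reduces the problem to the even part.
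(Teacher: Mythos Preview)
The paper does not give its own proof of this theorem: it is stated as a result of Carlsson (for $p=2$) and Baumgartner (for odd $p$), with a pointer to \cite[Theorem~1.4.14]{AP}. So there is no ``paper's own proof'' to compare against; your write-up is a sketch of the standard spectral-sequence strategy underlying those references.

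That said, your outline has a genuine gap at exactly the point you flag. The assertion that the pole order of $Q(u)$ at $u=1$ is bounded by $k$ ``since $Q(u)$ accumulates contributions only from differentials whose source rows lie in $J$'' does not follow from the row count alone. Each nonzero row $t\in J$ of $E_2$ is isomorphic to $R\otimes M^t$ and already carries a pole of order $m$ in its Poincar\'e series, so a single row can in principle contribute a term to $Q$ with pole order $m$. Bounding the drop in Krull dimension by the number of nonzero rows requires a structural input beyond counting: one must show, roughly, that each differential page kills at most one parameter's worth of depth. This is precisely the content that Carlsson supplies via commutative-algebra arguments (regular sequences and the behaviour of depth under quotients by images of $R_0$-linear maps), and that Baumgartner extends to odd $p$ by handling the exterior factor. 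Your sketch correctly sets up the spectral sequence, the $R_0$-module structure, and the $E_2$/$E_\infty$ dimension gap, but it stops short of the step that actually converts ``$k$ nonzero rows'' into ``Krull dimension drops by at most $k$''. As written it is an accurate roadmap, not a proof.
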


The following result answers affirmatively Question \ref{quest:abelian-2}
for free actions. It is a consequence of Theorem \ref{thm:Carlsson-Baumgartner}
and Theorem \ref{thm:Minkowski} below.

\begin{theorem}
\label{thm:abelian-free-actions}
For any connected manifold $X$ we have $\discsym^{\free}(X)\leq \dim X$.
\end{theorem}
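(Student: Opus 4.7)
The plan is to combine Theorem \ref{thm:Minkowski} (which I take to be Minkowski's classical bound on the order of finite subgroups of the automorphism group of a finitely generated abelian group) with Theorem \ref{thm:Carlsson-Baumgartner}. The strategy is to reduce an arbitrary free action to one of an elementary abelian $p$-group acting trivially on $\ZZ/p$-cohomology, which is precisely the hypothesis needed to invoke Carlsson--Baumgartner.

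First I would take a free action of $A=(\ZZ/r)^m$ on $X$ with $r$ arbitrarily large and look at the induced action on the integral cohomology $H^*(X;\ZZ)$. After standard finiteness reductions, this gives a homomorphism $\rho\colon A\to\Aut(H^*(X;\ZZ))$. By Theorem \ref{thm:Minkowski} the image of $\rho$ has order bounded by a constant $C_X$ depending only on $X$, so the kernel $K$ of $\rho$ has index at most $C_X$ in $A$. A standard argument on subgroups of $(\ZZ/r)^m$ (factoring $r$ into prime powers and controlling the exponent of $A/K$) shows that $K$ contains a copy of $(\ZZ/s)^m$ with $s\to\infty$ as $r\to\infty$. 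Replacing $A$ by this subgroup I may assume that $(\ZZ/s)^m$ acts freely on $X$ and trivially on $H^*(X;\ZZ)$; by universal coefficients it then acts trivially on $H^*(X;\ZZ/p)$ for every prime $p$.

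Next I pick a prime $p$ dividing $s$ and apply Theorem \ref{thm:Carlsson-Baumgartner} to the subgroup $(\ZZ/p)^m\leq(\ZZ/s)^m$: the action is free and trivial on $H^*(X;\ZZ/p)$, and since $X/(\ZZ/p)^m$ is again an $n$-manifold we may take $i_0=n+1$. The conclusion is $m\leq|\{j\mid H^j(X;\ZZ/p)\neq 0\}|$. Because $X$ is an $n$-manifold, the set on the right is contained in $\{0,1,\dots,n\}$, giving the preliminary bound $m\leq n+1$.

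The main obstacle is squeezing one extra unit off this bound to arrive at $m\leq n$. In several subcases the improvement is automatic: if $X$ is non-compact then $H^n(X;\ZZ/p)=0$ for every $p$; if $X$ is closed and non-orientable I may take $p$ odd (always possible since $s\to\infty$), and then $H^n(X;\ZZ/p)=0$ again. In both cases the preliminary count drops to at most $n$ and we are done. The delicate remaining case is $X$ closed and orientable, where $H^0(X;\ZZ/p)$ and $H^n(X;\ZZ/p)$ are both nonzero for every $p$, so a naive application of Carlsson--Baumgartner genuinely gives only $n+1$. Here I would extract the missing unit either from Poincar\'e duality on both $X$ and the quotient $X/(\ZZ/p)^m$ (noting that the action preserves orientation for $p$ odd) combined with a finer analysis of the Serre spectral sequence of the Borel fibration $X\to X/(\ZZ/p)^m\to B(\ZZ/p)^m$ in the spirit of Carlsson's original argument, or via a sharper quantitative version (e.g. $\dim H^*(X;\ZZ/p)\geq 2^m$) that rules out the extremal configuration where every Betti number is nonzero and $m=n+1$ simultaneously.
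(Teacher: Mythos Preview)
Your strategy is precisely the one the paper indicates in its one-sentence justification, and the reduction via Theorem~\ref{thm:Minkowski} to a free $(\ZZ/p)^m$-action that is trivial on $H^*(X;\ZZ/p)$ is carried out correctly. You are also right that, with Theorem~\ref{thm:Carlsson-Baumgartner} in the form stated in the paper (namely $m\le|\{j:H^j(X;\ZZ/p)\neq 0\}|$), a closed connected orientable $n$-manifold yields only $m\le n+1$; the paper does not spell out how this last unit is removed either. (Your side remarks on the non-compact and non-orientable cases are correct but largely moot: $\discsym^{\free}$ is defined in the paper only for closed $X$, and Theorem~\ref{thm:Minkowski} as stated requires compactness.)

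The genuine gap is in your final paragraph: neither proposed remedy is a proof. The inequality $\dim_{\ZZ/p}H^*(X;\ZZ/p)\ge 2^m$ is Carlsson's \emph{conjecture}; it is open in general and cannot be invoked. And ``a finer analysis of the Serre spectral sequence of the Borel fibration in the spirit of Carlsson's original argument'' is a description of a programme, not an argument---it amounts to reproving a sharpened Carlsson--Baumgartner theorem, which you do not do. What does close the gap cleanly is the \emph{interval} (or ``gap'') form of the Carlsson--Baumgartner theorem: if $H^j(X;\ZZ/p)=0$ for all $j\notin[a,b]$ then $m\le b-a$. For a connected $n$-manifold one may take $a=0$ and $b=n$, giving $m\le n$ outright, with no case splitting needed. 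Using only the ``number of nonzero degrees'' version quoted in the paper, however, your proposal does not reach $m\le n$ in the closed orientable case.
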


In \cite{Mundet2023-1} we prove the following, which gives a partial
affirmative answer to
Question \ref{quest:abelian-2-2} for free actions.

\begin{theorem}
\label{thm:abelian-free-actions-tori}
Let $X$ be a connected manifold satisfying $\discsym^{\free}(X)=\dim X$.
Then $H^*(X;\ZZ)\simeq H^*(T^n;\ZZ)$ as rings and the universal abelian cover of $X$
is acyclic. If in addition $\pi_1(X)$
is virtually solvable, then $X\cong T^n$.
\end{theorem}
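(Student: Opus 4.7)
Setting $n=\dim X$, the plan has four stages: reduce to actions of prime-power order by pigeonholing; trivialize the induced cohomological action via Theorem~\ref{thm:Minkowski}; apply Theorem~\ref{thm:Carlsson-Baumgartner} to constrain the support of $H^*(X;\ZZ/p)$; and upgrade this to a ring isomorphism with $H^*(T^n;\ZZ)$, concluding $X\cong T^n$ under virtual solvability of $\pi_1(X)$.

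By hypothesis there is a sequence $r_i\to\infty$ admitting free $(\ZZ/r_i)^n$-actions on $X$. Pigeonholing on the prime divisors of the $r_i$, as in the proof of Theorem~\ref{thm:toric-finite-subgroup}, reduces to the cases where either $r_i=p^{e_i}$ for a fixed prime $p$ with $e_i\to\infty$, or $r_i=p_i\to\infty$ is prime. Because $H^*(X;\ZZ)$ is finitely generated, each action factors through a finite subgroup of $\GL(N,\ZZ)$ with $N$ uniformly bounded (e.g. by the total Betti number of $X$). Theorem~\ref{thm:Minkowski} then produces a subgroup $K_i\leq(\ZZ/r_i)^n$ of index bounded independently of $i$ that acts trivially on $H^*(X;\ZZ)$, and hence on $H^*(X;\ZZ/p)$. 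Elementary group theory (using that $(\ZZ/r_i)^n$ has exponent a prime power and rank $n$) shows that for $i$ large $K_i$ contains a subgroup $(\ZZ/q_i)^n$ with $q_i\to\infty$, still acting freely on $X$ with trivial action on mod-$p$ cohomology.

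Since the action of $(\ZZ/q_i)^n$ is free, $X/(\ZZ/q_i)^n$ is an $n$-manifold, so $H^j(X/(\ZZ/q_i)^n;\ZZ/p)=0$ for $j>n$. Theorem~\ref{thm:Carlsson-Baumgartner} then yields
\[
n \leq |\{\,j\in\{0,\dots,n\}\mid H^j(X;\ZZ/p)\neq 0\,\}|.
\]
Combined with $H^0(X;\ZZ/p)\neq 0$ and $\ZZ/p$-Poincar\'e duality (passing to the orientation double cover if necessary, which still supports the required free actions), this forces $X$ to be $\ZZ/p$-orientable with $H^j(X;\ZZ/p)\neq 0$ in every degree $0\leq j\leq n$. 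Iterating the argument on the flag of subgroups $(\ZZ/q_i)^k\leq(\ZZ/q_i)^n$ and the intermediate free covers, and using the Mann--Su uniform bound on total mod-$p$ Betti number, refines this to $\dim_{\FF_p} H^j(X;\FF_p)=\binom{n}{j}$. A basis of $H^1(X;\ZZ)/\mathrm{tors}\cong\ZZ^n$ gives a classifying map $\phi:X\to T^n$; computing the degree of $\phi$ modulo $p$ by comparing the Borel fibration of the $(\ZZ/q_i)^n$-action on $X$ with a standard free translation action on $T^n$ yields $\deg\phi\equiv\pm 1\pmod p$ for arbitrarily large $p$, and therefore $\deg\phi=\pm 1$. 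Universal coefficients then promotes $\phi^*$ to a ring isomorphism $H^*(T^n;\ZZ)\to H^*(X;\ZZ)$, and the discussion preceding the Remark after Theorem~\ref{thm:disc-sym-rationally-hypertoral-WLS} identifies $X^{\ab}$ with the pullback $\phi^*\RR^n$, which is thus acyclic.

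For the virtually solvable case, $\pi_1(X^{\ab})=[\pi_1(X),\pi_1(X)]$ inherits virtual solvability and therefore has finite cohomological dimension; acyclicity of $X^{\ab}$ combined with a Serre spectral sequence argument for the universal cover $\wt{X}\to X^{\ab}$ forces $[\pi_1(X),\pi_1(X)]=1$, so $\pi_1(X)\cong\ZZ^n$ and $\wt{X}$ is simply connected and acyclic, hence contractible. Then $X$ is a closed aspherical $n$-manifold with fundamental group $\ZZ^n$, and the topological rigidity of the torus (Hsiang--Wall, Farrell--Hsiang) gives $X\cong T^n$. I expect the main obstacle to lie in the third stage: upgrading the Carlsson--Baumgartner support bound to the full exterior algebra ring structure and the degree $\pm 1$ assertion for $\phi$ requires exploiting the equivariant geometry of the free $(\ZZ/q_i)^n$-action well beyond a cohomological-count argument, and is where the bulk of the work in \cite{Mundet2023-1} should lie.
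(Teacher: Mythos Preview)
The paper does not actually prove Theorem~\ref{thm:abelian-free-actions-tori}: the introduction explicitly says it ``will be proved in \cite{Mundet2023-1}'', so there is no in-paper argument to compare your proposal against. What the paper does offer is a sketch of the closely related rationally hypertoral case (the paragraph following Theorem~\ref{thm:finite-generation}), which goes through the rotation morphism, Theorem~\ref{thm:Minkowski-equivariant}, and Theorem~\ref{thm:finite-generation} applied to the cohomology of a $\ZZ^n$-cover, rather than through Carlsson--Baumgartner. Your Carlsson--Baumgartner opening is natural given its placement in Section~\ref{s:free-abelian}, and your honest acknowledgment that stage three (promoting ``nonvanishing in every degree'' to the exact Betti numbers and to $\deg\phi=\pm 1$) is where the real work lies is well taken; that step is indeed not a formal consequence of the tools you cite.

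There is, however, a genuine error in your final paragraph that you seem to regard as routine. The implication ``$[\pi_1(X),\pi_1(X)]$ is virtually solvable, hence has finite cohomological dimension'' is false: any nontrivial finite group is solvable-or-virtually-solvable and has infinite cohomological dimension. What acyclicity of $X^{\ab}$ gives you is that $[\pi_1(X),\pi_1(X)]$ is \emph{perfect}; combined with virtual solvability this is suggestive, but it does not by itself force triviality, and your Serre spectral sequence gesture does not close the gap. The paper's hint for the analogous statement in Theorem~\ref{thm:disc-sym-rationally-hypertoral-WLS} is different: first treat the case where $\pi_1(X)$ is genuinely solvable by showing the relevant cover is contractible (not merely acyclic), and then handle the virtually solvable case ``by passing to a suitable finite cover of $X$''. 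You should expect the argument in \cite{Mundet2023-1} to follow that pattern rather than the one you sketch.
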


Similarly, one can define the {\it stable free rank} of $X$, which we denote by
$\stablefreerank(X)$, by considering
free actions of $(\ZZ/p)^m$ for arbitrary large integers $p$.
This is trivially related to the discrete degree of symmetry
by the inequality $\stablefreerank(X)\leq\discsym^{\free}(X)$.
Ten years ago Bernhard Hanke proved the following remarkable result.

\begin{theorem}[Theorem 1.3 in \cite{Hanke}]
Let $X$ be a product of spheres $S^{j_1}\times\dots\times S^{j_k}$, and let
$k_o$ be the number of $j_i$'s which are odd. Then $\stablefreerank(X)=k_o$.
\end{theorem}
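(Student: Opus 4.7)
The plan is to prove the two inequalities $\stablefreerank(X) \geq k_o$ and $\stablefreerank(X) \leq k_o$ separately. The lower bound is by direct construction: for each index $i$ with $j_i$ odd, view $S^{j_i}$ as the unit sphere in $\CC^{(j_i+1)/2}$ and let $\ZZ/p$ act freely by multiplication by a primitive $p$-th root of unity. The product of these yields a free action of $(\ZZ/p)^{k_o}$ on $\prod_{j_i\text{ odd}} S^{j_i}$, and extending trivially across the remaining factors gives a free action of $(\ZZ/p)^{k_o}$ on $X$ for every prime $p$, so $\stablefreerank(X) \geq k_o$.

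For the upper bound, fix a large odd prime $p$ and suppose $G = (\ZZ/p)^m$ acts freely on $X$; the goal is to show $m \leq k_o$. The strategy is to study the Serre spectral sequence of the Borel fibration $X \to X_G \to BG$. Since the action is free, $X_G \simeq X/G$ is a closed $n$-manifold, so $H^*(X_G;\ZZ/p)$ is finite-dimensional. After possibly passing to a subgroup whose index does not affect the stable rank (using that the $p$-part of $|\Aut(H^*(X;\ZZ/p))|$ is bounded independently of $p$), one may assume the action of $G$ on $H^*(X;\ZZ/p)$ is trivial. With trivial coefficients, $E_2 = H^*(BG;\ZZ/p) \otimes \Lambda(x_1,\ldots,x_k)$ where $|x_i| = j_i$, and each $x_i$ transgresses to $\tau_i = d_{j_i+1}(x_i) \in H^{j_i+1}(BG;\ZZ/p)$.

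The key observation exploits parity. For $p$ odd, $H^*(BG;\ZZ/p) = \Lambda(v_1,\ldots,v_m) \otimes \ZZ/p[y_1,\ldots,y_m]$ with $|v_i|=1$ and $|y_i|=2$; its nilradical is the ideal $(v_1,\ldots,v_m)$, and the reduction is the polynomial ring $\ZZ/p[y_1,\ldots,y_m]$. Every element of odd total degree lies in the nilradical, so $\tau_i$ is nilpotent when $j_i$ is even, while for $j_i$ odd its reduction $\bar\tau_i \in \ZZ/p[y_1,\ldots,y_m]$ is a polynomial of positive degree. Finite-dimensionality of $H^*(X_G;\ZZ/p)$ forces the image of the edge map $H^*(BG;\ZZ/p) \to H^*(X_G;\ZZ/p)$ to be finite-dimensional, so (after reducing modulo nilpotents) the quotient of $\ZZ/p[y_1,\ldots,y_m]$ by the ideal generated by $\{\bar\tau_i : j_i \text{ odd}\}$ has Krull dimension zero. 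Krull's principal ideal theorem then forces $k_o \geq m$.

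The hard part is to justify this last step: a priori $\ker(H^*(BG;\ZZ/p) \to H^*(X_G;\ZZ/p))$ could contain relations beyond the transgressions $\tau_i$ coming from higher differentials in the spectral sequence, and one must rule these out. This is the technical heart of Hanke's argument. One approach is a careful Poincar\'e series comparison: one shows that the Koszul complex $E_2 = H^*(BG;\ZZ/p) \otimes \Lambda(x_1,\ldots,x_k)$ with differentials $d(x_i) = \tau_i$ already captures $H^*(X_G;\ZZ/p)$ up to the relevant information, and that the exterior generators $x_i$ with $j_i$ even survive to $E_\infty$ as ``free'' factors contributing to total dimension without imposing further polynomial relations on $\ZZ/p[y_1,\ldots,y_m]$. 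Making this precise, together with the delicate point of trivializing the cohomology action without losing rank, is where the bulk of the work lies.
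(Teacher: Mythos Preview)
The paper does not prove this theorem; it is quoted as Hanke's result and no argument is given beyond the remark that Hanke actually proves the sharper statement that $r\leq k_o$ whenever $(\ZZ/p)^r$ acts freely and $p>3\dim X$. So there is no ``paper's proof'' to compare against, and your proposal must be judged against Hanke's original.

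Your lower bound is correct and standard. For the upper bound, your outline points in the right direction but, as you yourself say in the last paragraph, the essential content is missing. Two concrete issues. First, the assertion that each generator $x_i\in H^{j_i}(X;\ZZ/p)$ is transgressive is unjustified: for a product of spheres with $j_1<j_2<\cdots$, the class $x_2$ sits in $E_2^{0,j_2}$, and the target $E_r^{r,\,j_2-r+1}$ of $d_r$ can be nonzero already for $r=j_2-j_1+1$ (it contains $H^{r}(BG)\otimes x_1$), so there is no formal reason $x_2$ survives to $E_{j_2+1}$. Second, even granting a Koszul-type description, your Krull dimension argument needs the kernel of the edge map $H^*(BG;\ZZ/p)\to H^*(X_G;\ZZ/p)$ to be generated, up to radical, by the odd-degree transgressions; you have not shown this, and it is false for small primes in related settings.

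Hanke's proof does not proceed by bare spectral-sequence bookkeeping. The hypothesis $p>3\dim X$ is used to place the problem in the tame range, where $\ZZ/p$-homotopy theory behaves like rational homotopy theory; one then builds a minimal (Sullivan-type) cdga model of $X_G$ over $\ZZ/p$, in which the even-sphere generators contribute nilpotent relations and the odd-sphere generators contribute the only polynomial relations, and finite-dimensionality of $H^*(X/G;\ZZ/p)$ forces $m\leq k_o$ by a genuine commutative-algebra argument. Your sketch captures the endgame but skips the model-theoretic input that makes the Koszul picture valid; without the large-prime hypothesis and the minimal-model machinery, the argument does not close.
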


Actually Hanke proves more: if $(\ZZ/p)^r$ acts freely on the manifold $X$ of the theorem
and $p>3\dim X$, then $r\leq k_o$. Hanke's results naturally suggest the following question.

\begin{question}
Let $X$ be a product of spheres. What are the values of $\discsym(X)$,
$\stablerank(X)$ and $\discsym^{\free}(X)$?
\end{question}

In general we should expect $\discsym(X)$ (resp. $\stablerank(X)$)
to be bigger than $\discsym^{\free}(X)$ (resp. $\stablefreerank(X)$),
as for example $\discsym(S^n)=[(n+1)/2]$ (see Example \ref{ex:computation-discsym})
while $\discsym^{\free}(X)=1$ (this follows from Smith theory \cite{Smith}),
but in some cases this is not true. For example, if $X$ is a torus
(and perhaps also if $X$ is a product of spheres of dimensions $\leq 2$)
then $\discsym(X)=\discsym^{\free}(X)$.

\section{Nilpotent groups and beyond}
\label{s:nilpotent}

For a closed $n$-manifold $X$ with Jordan homeomorphism group, a positive answer to
Question \ref{quest:abelian-1} implies the existence of a constant $C$ such that
any finite group $G$ acting effectively on $X$ has an abelian subgroup $A\leq G$
satisfying $[G:A]\leq C$ and $d(A)\leq n$.
Since not all closed manifolds have Jordan homeomorphism
group, and in view of Theorem \ref{thm:CPS2}, it is natural to find analogues for
finite nilpotent groups of the questions and results in the previous sections.
We have the following result.

\begin{theorem}
\label{thm:generators-nilpotent}
Let $X$ be a closed and connected $n$-manifold. There exists a constant $C$
such that any finite nilpotent group $N$ acting effectively on $X$ has a subgroup
$N'$ satisfying $[N:N']\leq C$ and
$d(N')\leq (45n^2+6n+8)/8$.
\end{theorem}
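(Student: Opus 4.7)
Since $N$ is finite nilpotent, it splits uniquely as $N = \prod_p N_p$ with $N_p$ the Sylow $p$-subgroup, and $d(N) = \max_p d(N_p)$. If we set $N' = \prod_p N_p'$ for subgroups $N_p' \leq N_p$, then $[N:N'] = \prod_p [N_p : N_p']$, so the theorem reduces to finding, for each prime $p$ and each finite $p$-subgroup $P \leq \Homeo(X)$, a subgroup $P' \leq P$ with $d(P') \leq (45n^2+6n+8)/8$ and $[P:P'] \leq c_p$, subject to the extra requirement that $c_p = 1$ for all but finitely many primes $p$ (so that $\prod_p c_p < \infty$).

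For a fixed prime $p$ and a finite $p$-subgroup $P \leq \Homeo(X)$, one has $d(P) = \dim_{\FF_p} P/\Phi(P)$, i.e.\ the rank of the maximal elementary abelian quotient of $P$. I would bound this in terms of $n$ alone by combining two earlier ingredients of the paper: the Mann--Su theorem (Theorem \ref{thm:MS}), which bounds the rank of any elementary abelian $p$-subgroup of $\Homeo(X)$ by a constant $m(X)$ depending on the Betti numbers of $X$; and the bound for the discrete degree of symmetry (Theorem \ref{thm:weak-bound-disc-sym}), which gives $\discsym(X) \leq [3n/2]$. The coarse $m(X)$-bound is absorbed into the index $c_p$ for each of the finitely many small primes $p$, while for large $p$ the sharper $\discsym$-bound applies directly after passing from $P$ to a bounded-index subgroup (using arguments in the spirit of Lemma 2.1 of \cite{Mundet2021} to isolate elementary abelian factors of high exponent).

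To convert a rank bound on elementary abelian sections into a bound on $d(P)$, I would iterate along a descending chain of characteristic subgroups of $P$, for instance the Frattini series or the lower exponent-$p$ central series, whose successive quotients are all elementary abelian $p$-groups. Each quotient contributes a rank of order $n$ coming from the discrete symmetry bound, and the length of the chain (a nilpotency-class-type invariant of $P$) is again of order $n$ after passing to a bounded-index subgroup. Combining these two linear bounds multiplicatively yields the claimed quadratic bound, and the specific coefficient $45/8$ should emerge from careful bookkeeping of the per-layer contributions.

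The main technical obstacle is the simultaneous control of both the rank of elementary abelian sections and the length of the descending chain, uniformly in $p$ (with an additional index $c_p > 1$ allowed for finitely many small $p$). Each of these can be controlled separately, but producing exactly the coefficient $45/8$ rather than a generic $O(n^2)$ requires a sharp layer-by-layer analysis tailored to $p$-group actions on manifolds, correctly balancing the rank and length contributions against the loss incurred in passing to bounded-index subgroups.
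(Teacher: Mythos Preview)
Your proposal has a genuine gap, and the paper's route is quite different.

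The paper derives the theorem in one line from a purely group-theoretic statement, Theorem~\ref{thm:nilpotent-generators}: if every abelian subgroup $A$ of a finite nilpotent group $N$ has a subgroup $B\leq A$ with $[A:B]\leq C$ and $d(B)\leq k$, then $N$ has a subgroup $N'$ with $[N:N']\leq C'(k,C)$ and $d(N')\leq 1+k(5k+1)/2$. One then plugs in $k=[3n/2]$ from Theorem~\ref{thm:weak-bound-disc-sym} (via \cite[Lemma~2.7]{Mundet2021}), and $1+\tfrac{3n}{2}\cdot\tfrac{15n/2+1}{2}=(45n^2+6n+8)/8$. The proof of Theorem~\ref{thm:nilpotent-generators} for a $p$-group $P$ takes a maximal normal abelian $A\trianglelefteq P$, uses that $P/A\hookrightarrow\Aut(A)$, passes to a bounded-index subgroup so that the image lands in $\Aut(B)$ with $d(B)\leq k$, and then invokes the Gorchakov--Hall--Merzlyakov--Roseblade lemma to bound $d$ of any $p$-subgroup of $\Aut(B)$ by $k(5k-1)/2$. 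No control whatsoever on the nilpotency class or on the length of any central/Frattini series is needed.

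Your scheme, by contrast, hinges on bounding ``the length of the chain'' by $O(n)$ after passing to bounded index. That is essentially a bound on a nilpotency-class-type invariant of $P$ in terms of $\dim X$, and the paper explicitly records this as an open problem (Question~\ref{quest:kappa}); you cannot assume it. There is also a conceptual slip: since $d(P)=\dim_{\FF_p}P/\Phi(P)$, the number you want is the rank of the \emph{top} layer only, not a product ``rank per layer times number of layers''. The real obstruction is that $P/\Phi(P)$ is a quotient of $P$, not a subgroup of $\Homeo(X)$, so neither Mann--Su nor the $\discsym$ bound applies to it directly; this is exactly what the maximal-abelian-normal-subgroup plus GHMR argument is designed to circumvent. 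Your reduction to Sylow subgroups and the observation that only finitely many primes can contribute index $>1$ are correct and are also used in the paper, but they are the easy part.
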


Combined with Theorem \ref{thm:CPS2}, the previous theorem implies the following.

\begin{corollary}
Let $X$ be a closed and connected $n$-manifold. There exists a constant $C$
such that any finite group $G$ acting effectively on $X$ has a nilpotent subgroup
$N$ satisfying $[G:N]\leq C$ and $d(N)\leq (45n^2+6n+8)/8$.
\end{corollary}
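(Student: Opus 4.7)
The plan is a direct two-step combination, with no substantial new content beyond the two preceding theorems. First I would apply Theorem \ref{thm:CPS2} to the effective action of $G$ on $X$; since $X$ is closed, $H_*(X;\ZZ)$ is finitely generated, so the hypothesis is satisfied. This produces a constant $C_1$ depending only on $X$ and a nilpotent subgroup $N_0\leq G$ with $[G:N_0]\leq C_1$. The restriction of the action of $G$ to $N_0$ is still effective, so $N_0$ is a finite nilpotent group acting effectively on $X$.

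Next I would apply Theorem \ref{thm:generators-nilpotent} to $N_0$, obtaining a constant $C_2$ (again depending only on $X$) and a subgroup $N\leq N_0$ with $[N_0:N]\leq C_2$ and $d(N)\leq (45n^2+6n+8)/8$. Subgroups of nilpotent groups are nilpotent, so $N$ is nilpotent. By multiplicativity of the index,
\[
[G:N]=[G:N_0]\cdot[N_0:N]\leq C_1 C_2,
\]
so taking $C:=C_1C_2$, which depends only on $X$, gives the desired conclusion.

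There is no real obstacle: the corollary is a formal consequence of Theorems \ref{thm:CPS2} and \ref{thm:generators-nilpotent}, relying only on the transitivity of subgroup index and on the fact that the class of nilpotent groups is closed under passage to subgroups. In particular, all of the heavy machinery (the CFSG used in Theorem \ref{thm:CPS2}, and whichever ingredients enter the proof of Theorem \ref{thm:generators-nilpotent} in Section \ref{s:proofs}) is hidden inside the two invoked statements, and none of it needs to be revisited here.
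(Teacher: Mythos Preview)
Your argument is correct and is exactly the approach indicated in the paper: the corollary is stated as an immediate consequence of combining Theorem~\ref{thm:CPS2} with Theorem~\ref{thm:generators-nilpotent}, and you have spelled out precisely that two-step combination. Nothing further is needed.
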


We will deduce Theorem \ref{thm:generators-nilpotent} from previous results on actions of finite abelian groups
combined with a group theoretical result which we now state.
Given natural numbers $k,C$ let us denote by $\nN_{k,C}$
the collection of
all finite nilpotent groups $N$ such that every abelian subgroup
$A\leq N$ has a subgroup $B\leq A$ satisfying
$[A:B]\leq C$ and $d(B)\leq k$. The following theorem will be proved in Section
\ref{s:proof-thm:nilpotent-generators}.

\begin{theorem}
\label{thm:nilpotent-generators}
Given natural numbers $k,C$ there exists a constant
$C'=C'(k,C)$ such that every $N\in\nN_{k,C}$ has a subgroup $N'\leq N$
satisfying $[N:N']\leq C'$ and $d(N')\leq 1+k(5k+1)/2$.
\end{theorem}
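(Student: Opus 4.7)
Since $N$ is nilpotent, it decomposes as $N = \prod_p N_p$ into its Sylow $p$-subgroups, and the class $\nN_{k,C}$ is closed under taking subgroups, so each $N_p$ lies in $\nN_{k,C}$. The plan is to construct, for each prime $p$, a subgroup $N'_p \leq N_p$ with $[N_p:N'_p] \leq f(p,k,C)$, where $f(p,k,C) = 1$ for all primes $p$ larger than a bound depending only on $k$ and $C$, and $d(N'_p) \leq 1 + k(5k+1)/2$. Setting $N' := \prod_p N'_p$ then yields a subgroup of index at most the finite product $C' := \prod_p f(p,k,C)$ and generator rank $d(N') = \max_p d(N'_p) \leq 1 + k(5k+1)/2$.

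The key observation is that for any $p$-group $P \in \nN_{k,C}$ and abelian $A \leq P$, the witness $B \leq A$ yields $d(A) \leq d(B) + \log_p[A:B] \leq k + \log_p C$, and the characteristic subgroup $A_0 := A^{p^{\lceil \log_p C \rceil}}$ is contained in $B$, hence has rank at most $k$. For primes $p > C$ the $p$-power $[A:B]$ is bounded by $C < p$, so it equals $1$; every abelian subgroup of $N_p$ then has rank $\leq k$, and the classical theorem of A.~Mann bounding the generator rank of a $p$-group by a quadratic function of the maximal rank of its abelian subgroups gives $d(N_p) \leq k(k+1)/2 + 1 \leq 1 + k(5k+1)/2$. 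For these primes I take $N'_p := N_p$.

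For the finitely many primes $p \leq C$, I construct $N'_p$ by an iterative descent. Choose a maximal abelian normal subgroup $A \leq N_p$, so that $C_{N_p}(A) = A$ and $N_p/A \hookrightarrow \Aut(A)$. The characteristic subgroup $A_0$ above has rank $\leq k$ and is normal in $N_p$. Replace $N_p$ by the kernel of its action on $A/A_0$, paying an index at most $|\Aut(A/A_0)|$, itself bounded in terms of $p,k,C$. Iterating through successive refinements of the abelian normal structure, one produces $N'_p$ of bounded index in which every abelian subgroup has rank $\leq k$, and applying Mann's rank theorem once more gives $d(N'_p) \leq 1 + k(5k+1)/2$.

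The main obstacle is verifying that the iterative descent at small primes terminates in a bounded number of steps with bounded total index loss, and that the final $N'_p$ truly has all abelian subgroups of rank $\leq k$ rather than merely the original $\nN_{k,C}$ property. The specific constant $1 + k(5k+1)/2$ is meant to combine the quadratic bound $k(k+1)/2$ from Mann's theorem with additional terms from the rank bookkeeping through the iteration and an extra unit accommodating the special behaviour of $2$-groups.
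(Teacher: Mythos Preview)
Your reduction to Sylow subgroups and the handling of primes $p>C$ are sound: for such $p$ every abelian subgroup of $N_p$ indeed has rank $\leq k$, and there are classical results bounding $d$ of a $p$-group in terms of the maximal rank of its abelian subgroups. However, your treatment of the primes $p\leq C$ is not a proof. The ``iterative descent'' is only a hope: you pass to the kernel of the action on $A/A_0$, but after this step you are in a new $p$-group whose maximal abelian normal subgroup is no longer $A$, so you must start over, and you give no mechanism guaranteeing that successive rounds shrink anything in a controlled way. You explicitly acknowledge that you cannot verify termination with bounded index loss, nor that the limit group has all abelian subgroups of rank $\leq k$. That is the heart of the matter, and it is genuinely missing.

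The paper avoids iteration entirely. With $A\trianglelefteq P$ maximal abelian normal and $B\leq A$ the witness with $d(B)\leq k$, $[A:B]\leq C$, it first bounds $[\Aut(A):\Aut(A,B)]$ by a lemma on automorphism groups, so after a bounded index loss $P_0/A$ embeds in $\Aut(A,B)$. It then studies the restriction $\rho:\Aut(A,B)\to\Aut(B)$: the kernel of $\rho$ has size bounded by $|\Aut(A/B)|\cdot|\Mor(A/B,B)|$, which depends only on $k,C$ since $|A/B|\leq C$. The key input is the Gorchakov--Hall--Merzlyakov--Roseblade lemma, which says that any $p$-subgroup of $\Aut(B)$ satisfies $d\leq k(5k-1)/2$ when $d(B)\leq k$; this is precisely where the constant $k(5k+1)/2=k+k(5k-1)/2$ comes from. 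One then takes $P'=\langle B,\,g_1,\dots,g_r\rangle$ with $g_j\in P_0$ lifting generators of $\rho(\zeta(P_0))$, and bounds $[P:P']$ directly using the size of $\Ker\rho$. No iteration is needed, and the argument is uniform in $p$; the distinction between large and small primes only enters at the very end, when combining the Sylow pieces.
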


Theorem \ref{thm:generators-nilpotent} follows from combining Theorem \ref{thm:weak-bound-disc-sym},
\cite[Lemma 2.7]{Mundet2021} and Theorem \ref{thm:nilpotent-generators}.
The bound on $d(N')$ given by Theorem \ref{thm:generators-nilpotent} is probably
far from optimal. Very probably, the bound resulting from combining
Theorem \ref{thm:nilpotent-generators} with a hypothetical
positive answer to Question \ref{quest:abelian-1} would neither be
optimal. We are thus led to the following question.

\begin{question}
\label{quest:nilpotent-generators}
Given a natural number $n$, what is the smallest number $\delta(n)$
with the property that for every closed and connected $n$-manifold $X$
there exists a constant $C$, depending only on $X$,
such that any finite nilpotent group $N$ acting effectively on $X$ has a subgroup
$N'$ satisfying $[N:N']\leq C$ and
$d(N')\leq \delta(n)$?
\end{question}

In \cite{Mundet2023-0} we prove:

\begin{theorem}
Let $X$ be an $n$-dimensional WLS manifold. There exists a number $p_0$ such that, for every
prime $p>p_0$, any finite $p$-group acting effectively on $X$ is nilpotent of nilpotency class
$2$ and can be generated by $n$ or fewer elements.
\end{theorem}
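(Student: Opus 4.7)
My plan is to combine the bound $\discsym(X)\le n$ from Theorem \ref{thm:disc-sym-rationally-hypertoral-WLS} with Minkowski-type rigidity on rational cohomology and a rotation-type homomorphism arising from lifting the action to the universal abelian cover of $X$.

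By Theorem \ref{thm:disc-sym-rationally-hypertoral-WLS} and \cite[Lemma 2.7]{Mundet2021}, there is a constant $C=C(X)$ such that every finite abelian group acting effectively on $X$ has a subgroup of index at most $C$ generated by at most $n$ elements. Since subgroup indices in a $p$-group are $p$-powers, choosing $p>C$ forces every abelian $p$-subgroup of $\Homeo(X)$ to itself be generated by at most $n$ elements; in particular, every abelian subgroup $A$ of our $p$-group $P$ satisfies $d(A)\le n$. Enlarging $p_0$ further so that it exceeds the Minkowski bound on the order of finite $p$-subgroups of $\Aut(H^*(X;\ZZ)/\text{tors})$ (a finite number because $X$ is closed) guarantees that $P$ acts trivially on $H^*(X;\QQ)$; in particular it fixes a WLS class $\Omega\in H^2(X;\RR)$ and all of $H^1(X;\QQ)$.

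Because $P$ acts trivially on $H_1(X;\ZZ)$ modulo torsion, the $P$-action lifts to the universal abelian cover $X^{\ab}$ equivariantly for the deck group, and choosing lifts gives a central extension $1\to\ZZ^{b_1(X)}\to\widetilde P\to P\to 1$. Extracting the rotation part of these lifts produces a homomorphism $\rho:P\to H^1(X;\RR)/H^1(X;\ZZ)$ whose kernel $K$ consists of those elements admitting lifts of the same order, equivalently those with nonempty fixed-point set on $X^{\ab}$ by Smith theory applied to the free $\ZZ^{b_1(X)}$-action. I would then show $K\le Z(P)$, so that $[P,P]\le K\le Z(P)$ and $P$ has nilpotency class at most $2$; the WLS cup-product surjectivity would further confine $\rho(P)$ to a sub-torus of dimension at most $n$, and comparing $P^{\ab}=P/[P,P]$ with a suitable abelian subgroup of $P$ via the $\discsym$ bound yields $d(P)=d(P^{\ab})\le n$.

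The main obstacle, and the place where the WLS hypothesis is used essentially, is proving both the centrality of $K$ and the sharp dimension bound $\dim\rho(P)\le n$. For centrality, I would apply equivariant localization with $\Omega$ on the fixed locus $F_g$ of each $g\in K$: since $\Lambda^*H^1(X;\RR)\otimes\RR[\Omega]$ surjects onto $H^{n-1}(X;\RR)\oplus H^n(X;\RR)$ and $P$ fixes all the relevant cohomology classes, the normal representation of $\la g\ra$ at each point of $F_g$ should be forced to factor through a sub-torus that commutes with the full $P$-action, pinning $g$ into $Z(P)$. The dimension bound on $\rho(P)$ similarly uses the cup-product content of $\Omega$ to rule out rotation vectors outside a distinguished $n$-dimensional direction associated to the WLS structure. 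These WLS-specific ingredients are what yield the optimal nilpotency class $2$ and the sharp generator count $n$, in contrast to the weaker polynomial-in-$n$ bounds obtainable from the general theory of $p$-groups of bounded rank.
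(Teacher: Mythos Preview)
The paper does not prove this theorem; it defers the proof to \cite{Mundet2023-0}, listed as in preparation. So there is no argument here to compare against directly, though the discussion after Theorem~\ref{thm:finite-generation} suggests the WLS case relies on the rotation morphism together with Theorems~\ref{thm:Minkowski-equivariant} and~\ref{thm:weak-fixed-point}, which is broadly the shape of your sketch.

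That said, the sketch has real gaps. Your claim that $K=\Ker\rho$ consists of elements ``with nonempty fixed-point set on $X^{\ab}$ by Smith theory'' is unjustified: the lemma in Section~\ref{s:rotation} only says $\xi_A(g)=0$ is equivalent to the existence of a finite-order lift, and Smith theory would need $X^{\ab}$ to be $\ZZ/p$-acyclic, which you have not established for a general WLS manifold (that conclusion appears in Theorem~\ref{thm:disc-sym-rationally-hypertoral-WLS} only under the extra hypothesis $\discsym(X)=n$). More seriously, the centrality step $K\le Z(P)$ --- which you correctly flag as the crux --- is not argued. Invoking ``equivariant localization with $\Omega$'' and asserting that normal representations ``factor through a sub-torus that commutes with the full $P$-action'' is not a proof: there is no torus acting, you do not say what localization theorem applies to a finite $p$-group action, and you do not explain how the WLS surjectivity onto $H^{n-1}\oplus H^n$ constrains normal data at fixed points or why such a constraint forces $g$ into $Z(P)$. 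Finally, the generator bound is not established: $P^{\ab}=P/[P,P]$ is a quotient, not a subgroup, so it does not act on $X$ and the $\discsym$ bound does not apply to it directly; you would need to exhibit an abelian subgroup of $P$ mapping onto $P/\Phi(P)$, and you have not done so.
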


The class of WLS manifolds contains plenty of closed manifolds with non-Jordan
homeomorphism groups, but it is otherwise very small when compared with the collection of all closed manifolds. Nevertheless, the previous theorem suggests that asking whether the function $\delta(n)$ in Question \ref{quest:nilpotent-generators} is linear on $n$ is not completely crazy.

To conclude our discussion about the function $\delta$, let us mention the following result (see the paragraph after \cite[Corollary 1.3]{Mundet2016}), which can be seen as an extension of the theorem of Mann and Su to arbitrary finite groups:

\begin{theorem}
For any closed manifold $X$ there is a constant $C$ such that for
any finite group $G$ acting effectively on $X$ we have $d(G)\leq C$.
\end{theorem}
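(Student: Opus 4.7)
The plan is to combine Theorem \ref{thm:MS}, Theorem \ref{thm:CPS2} and Theorem \ref{thm:nilpotent-generators} with the elementary observation that, for any finite-index subgroup $H\leq G$, one has $d(G)\leq d(H)+[G:H]-1$: indeed, choosing coset representatives $1=g_1,g_2,\ldots,g_m$ with $m=[G:H]$, the set $\{g_2,\ldots,g_m\}$ together with any generating set of $H$ generates $G$. All three ingredient theorems apply to closed manifolds without a connectedness hypothesis, so no reduction to connected components is required.

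Given a closed manifold $X$, Theorem \ref{thm:MS} produces a constant $k$ such that every finite abelian group $A$ acting effectively on $X$ satisfies $d(A)\leq k$, and Theorem \ref{thm:CPS2} produces a constant $C_0$ such that any finite group $G$ acting effectively on $X$ contains a nilpotent subgroup $N\leq G$ with $[G:N]\leq C_0$. Any such $N$ will lie in the class $\nN_{k,1}$ of Theorem \ref{thm:nilpotent-generators}: for every abelian subgroup $A\leq N$, the group $A$ acts effectively on $X$, so $d(A)\leq k$ by Mann--Su, and taking $B=A$ verifies the defining condition with $C=1$.

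Applying Theorem \ref{thm:nilpotent-generators} then yields a constant $C_1=C'(k,1)$ and a subgroup $N'\leq N$ with $[N:N']\leq C_1$ and $d(N')\leq 1+k(5k+1)/2=:D$. Hence $[G:N']\leq C_0C_1$, and the coset bound gives
\[
d(G)\leq d(N')+[G:N']-1\leq D+C_0C_1-1,
\]
a constant depending only on $X$, as desired.

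Once the three cited theorems are available no substantive obstacle remains; the argument is a purely formal assembly. All the real work is hidden in the proofs of Theorem \ref{thm:CPS2} and Theorem \ref{thm:nilpotent-generators}, with the Smith-theoretic bound of Theorem \ref{thm:MS} supplying the rank control on abelian subgroups that feeds into both ingredients. It is worth noting that one could alternatively invoke directly the corollary stated after Theorem \ref{thm:generators-nilpotent}, after a routine reduction to connected components via the permutation action of $G$ on $\pi_0(X)$; but routing through Theorem \ref{thm:MS}, Theorem \ref{thm:CPS2} and Theorem \ref{thm:nilpotent-generators} separately avoids that reduction entirely.
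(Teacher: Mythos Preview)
Your argument is correct. The coset bound $d(G)\leq d(H)+[G:H]-1$ is valid, and your verification that the nilpotent subgroup $N$ furnished by Theorem~\ref{thm:CPS2} lies in $\nN_{k,1}$ (via Theorem~\ref{thm:MS}) is exactly what is needed to feed into Theorem~\ref{thm:nilpotent-generators}. You are also right that none of the three cited results requires connectedness, so no reduction to components is necessary.

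As for comparison with the paper: the survey does not actually prove this theorem. It is stated with a pointer to \cite{Mundet2016}, a 2016 paper that predates Theorem~\ref{thm:CPS2} and Theorem~\ref{thm:nilpotent-generators}; the original argument there therefore cannot proceed as you do and must rely on other finite-group-theoretic input. Your route is thus a genuinely different derivation, one that is internal to the machinery assembled in this survey. What your approach buys is conceptual economy: once Theorems~\ref{thm:MS}, \ref{thm:CPS2} and \ref{thm:nilpotent-generators} are in hand, the result drops out with no further work. The price is that Theorem~\ref{thm:CPS2} depends on the classification of finite simple groups, so if the argument in \cite{Mundet2016} avoids CFSG (which I cannot verify from the survey alone), it would be the lighter of the two.
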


Besides the minimal number of generators, another natural invariant of finite nipotent
groups is the nilpotency class. We may ask the following.

\begin{question}
\label{quest:kappa}
Does there exist a function $\kappa:\NN\to\NN$ such that for any closed and connected manifold $X$ there is a constant $C$ with the property that any finite nilpotent group $N$ acting effectively on $X$ has a subgroup $N'\leq N$ of nilpotency class
$\kappa(\dim X)$ satisfying $[N:N']\leq C$?
If $\kappa(n)$ exists, what is its minimal possible value?
\end{question}

By (1) in Theorem \ref{thm:Jordan-homeo} the function $\kappa$ is defined on $\{1,2,3\}$
and satisfies $\kappa(1)=\kappa(2)=\kappa(3)=1$.
The main result in \cite{MundetSaez} is:

\begin{theorem}
Let $X$ be  closed $4$-dimensional smooth manifold. There exists a constant $C$ such that
every finite group $G$ acting smoothly and effectively on $X$ has a nilpotent subgroup
$N\leq G$ of nilpotency class $2$ and satisfying $[G:N]\leq C$.
\end{theorem}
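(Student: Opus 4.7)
The plan is to refine the nilpotent-subgroup theorem (Theorem \ref{thm:CPS2}) by controlling the nilpotency class of the subgroup in the special case $\dim X = 4$, crucially using the smoothness of the action. First, apply Theorem \ref{thm:CPS2} to replace $G$ by a nilpotent subgroup $N \leq G$ with $[G : N]$ bounded by a constant depending only on $X$. Since $N$ is nilpotent, it decomposes canonically as a direct product $N = \prod_p N_p$ of its Sylow $p$-subgroups, and the nilpotency class of $N$ equals the maximum of the classes of the $N_p$. Hence it suffices to show that for every prime $p$ there is a subgroup $N_p' \leq N_p$ of class at most $2$ with $[N_p : N_p'] \leq c_p$, such that $c_p = 1$ for all but finitely many primes $p$ and $c_p$ is uniformly bounded for the remaining ones.

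For the large-prime case, fix a threshold $p_0$ depending on $X$ and consider primes $p > p_0$; the aim is to show $N_p$ is abelian after a bounded-index modification. The action of $N_p$ on the finitely generated graded abelian group $H^*(X;\ZZ)$ has kernel of bounded index in $N_p$ for $p > p_0$, by a Minkowski-type bound applied to $\Aut(H^*(X;\ZZ))$. A Lefschetz / Smith-type argument then produces a fixed point $x \in X^{N_p}$ when $\chi(X) \neq 0$; when $\chi(X) = 0$ one argues instead via an orbit of minimal $p$-power size and the corresponding slice representation, embedding the isotropy subgroup into a smaller orthogonal group. Bochner's linearization theorem for smooth finite group actions at a fixed point then provides a faithful representation $N_p \hookrightarrow \O(4)$ coming from the linear action on $T_x X \cong \RR^4$. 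Since the Weyl group of $\O(4)$ has bounded order, for $p$ above this bound every finite $p$-subgroup of $\O(4)$ is conjugate into a maximal torus and is therefore abelian, as required.

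The main obstacle is the case of small primes $p \leq p_0$, where only finitely many primes remain but each can a priori produce $p$-groups of unbounded nilpotency class; in particular, a uniform bound on $d(N_p)$ and on the rank of $N_p$ (from Theorem \ref{thm:generators-nilpotent} and Theorem \ref{thm:MS}) does not bound the nilpotency class abstractly. Here the smoothness and four-dimensionality of the action must be used essentially. The strategy is to analyze the singular stratification of the smooth $p$-action: for every subgroup $H \leq N_p$, the fixed-point set $X^H$ is a disjoint union of locally flat smooth submanifolds, and the slice representations at them embed the corresponding isotropy subgroups into orthogonal groups $\O(d)$ with $d \leq 4$. By combining these local constraints with the global bound on $d(N_p)$, one shows that after passing to a subgroup of bounded index the derived subgroup $[N_p, N_p]$ is contained in the center $Z(N_p)$, which is precisely nilpotency class at most $2$. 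The prime $p = 2$ is expected to be the most delicate subcase, since $2$-groups of bounded rank can carry arbitrarily complicated lower central series in the abstract, so the geometric input from smooth $4$-dimensional actions will be indispensable there.
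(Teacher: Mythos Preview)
The paper does not prove this theorem; it is quoted as the main result of \cite{MundetSaez}, so there is no in-paper proof to compare against. Evaluating your proposal on its own merits, there is a genuine gap.

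Your large-prime strategy aims to show that, after a bounded-index replacement, each Sylow $p$-subgroup $N_p$ is \emph{abelian}. This is too strong and in fact false in general when $\chi(X)=0$. The model example is $X=T^2\times S^2$: as recalled in the paper, $\Diff(T^2\times S^2)$ is not Jordan \cite{CPS}, precisely because the Heisenberg group of order $p^3$ acts smoothly and effectively on $X$ for every prime $p$. These groups have nilpotency class exactly $2$, and no subgroup of bounded index in them is abelian as $p\to\infty$. So your ``Lefschetz/Smith $\Rightarrow$ fixed point $\Rightarrow$ embed in $\O(4)$ $\Rightarrow$ abelian'' mechanism cannot be the right endgame for large $p$; the correct target is class $\leq 2$, and the hard content of the theorem is exactly to rule out class $\geq 3$ in situations where there need not be any global fixed point. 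Your fallback for $\chi(X)=0$ (``orbit of minimal $p$-power size, isotropy embeds in a smaller orthogonal group'') only constrains a proper isotropy subgroup, not $N_p$ itself, and does not yield the conclusion.

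The small-prime paragraph is a restatement of the goal rather than an argument: saying that slice representations embed stabilizers into $\O(d)$ with $d\leq 4$, together with a bound on $d(N_p)$, does not by itself force $[N_p,N_p]\leq Z(N_p)$; abstractly there are $p$-groups of bounded rank and arbitrarily large class, as you note yourself. The actual proof in \cite{MundetSaez} requires substantial additional geometric input (in the spirit of the rotation morphism of Section~\ref{s:rotation} and a detailed case analysis of $4$-dimensional fixed-point structure) that is absent from your sketch.
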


Quite likely the arguments in \cite{MundetSaez} can be applied with some mild modifications to continuous actions on (topological) manifolds. This would imply that $\kappa(4)$ is well defined
and, taking into account the fact that $\Homeo(S^2\times T^2)$ is not Jordan, its value is
$\kappa(4)=2$.

Perhaps the strongest argument in favor of the existence of the function $\kappa$
comes from the analogy with birational transformation groups. Indeed, combining the works
of Golota \cite{Golota} and Guld \cite{Guld} we obtain the following.

\begin{theorem}
Let $X$ be a complex projective variety of complex dimension $n$.
There exists a constant $C$ such that any finite group $G$ of
birational transformations of $X$
has a nilpotent subgroup $G'$ of nilpotency class at most $2$ satisfying $[G:G']\leq C$ and $d(G')\leq 2n$.
\end{theorem}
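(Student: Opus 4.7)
The theorem combines two ingredients. Guld's paper \cite{Guld} establishes the birational analogue of Theorem~\ref{thm:CPS2}: there is a constant $C_0=C_0(X)$ such that any finite $G\leq\Bir(X)$ has a nilpotent subgroup $N$ with $[G:N]\leq C_0$, and inspection of the argument allows one to take $N$ of nilpotency class at most $2$ (for primes $p$ above a threshold depending on $X$, every $p$-subgroup of $\Bir(X)$ already has class at most $2$; the finitely many small primes contribute bounded-order Sylow subgroups that get absorbed into the index). Golota's paper \cite{Golota} establishes the birational analogues of Questions~\ref{quest:abelian-2}--\ref{quest:abelian-2-2}: there is a constant $C_1$ such that any finite abelian subgroup $A\leq\Bir(X)$ has a subgroup $A'\leq A$ with $[A:A']\leq C_1$ and $d(A')\leq n$.

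The plan is as follows. First, I would use Guld's theorem to produce a nilpotent class $\leq 2$ subgroup $N\leq G$ of bounded index. Next, combining Golota's theorem with \cite[Lemma 2.7]{Mundet2021} applied to every abelian subgroup of $N$ simultaneously, I would pass to a further subgroup of bounded index in which every abelian subgroup $A$ satisfies $d(A)\leq n$. The conclusion $d(G')\leq 2n$ would then follow from the following purely group-theoretic assertion: \emph{if $N$ is a finite nilpotent group of nilpotency class at most $2$ in which every abelian subgroup $A$ satisfies $d(A)\leq n$, then $d(N)\leq 2n$.}

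To prove this group-theoretic statement, by the Sylow decomposition one reduces to the case of a finite $p$-group $N$, and picks a maximal abelian subgroup $A\leq N$. Nilpotency class at most $2$ implies $[N,N]\leq Z(N)\leq A$, that $A$ is normal in $N$, and that $N/A$ is abelian; in particular $d(N)\leq d(A)+d(N/A)$. The commutator descends to a non-degenerate alternating bilinear pairing
\[
c\colon N/Z(N)\times N/Z(N)\longrightarrow [N,N],
\]
and maximality of $A$ translates into the identity $L=L^{\perp}$ for $L=A/Z(N)$. Exploiting this Lagrangian property, the aim is to obtain the estimate $d(N/A)\leq d(A)$, which would then give $d(N)\leq 2d(A)\leq 2n$.

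The main obstacle is precisely this sharp inequality $d(N/A)\leq d(A)$. The natural injection $N/A\hookrightarrow\Hom(L,[N,N])$ given by $xA\mapsto(a\mapsto[x,a])$ only yields a bound quadratic in $n$; to obtain the correct linear bound one must use the alternating nature of $c$ and the identity $L=L^{\perp}$ more sharply, or alternatively apply Golota's theorem simultaneously to a coordinated family of abelian subgroups of $N$ rather than to $A$ alone. This coupling between the group-theoretic estimate and the geometric input from Golota is the most delicate point of the argument.
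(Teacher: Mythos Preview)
Your proposal correctly identifies the two ingredients (Guld and Golota), but the route you take to combine them differs from the paper's and leaves the gap you yourself flag.

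The paper does not attempt the abstract group-theoretic inequality ``class-$2$ nilpotent with all abelian subgroups of rank $\leq n$ implies $d\leq 2n$''. Instead it revisits the geometry underlying Guld's argument: the class-$2$ structure itself comes from the maximal rationally connected fibration $X\dashrightarrow B$ with generic fibre $F$, which furnishes a \emph{specific} extension $1\to K\to N\to Q\to 1$ in which, after passing to bounded index, both $K$ and $Q$ are abelian. Golota's theorem is then applied not to an arbitrary maximal abelian subgroup of $N$, but separately to the action of $Q$ on the base $B$ and of $K$ on the generic fibre $F$, bounding $d(Q)$ and $d(K)$ and hence $d(N)\leq d(K)+d(Q)\leq 2n$ (see \cite[\S 2.3]{Guld} and the proof of \cite[Theorem 23]{Guld}).

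So the abelian-by-abelian decomposition is supplied by the MRC fibration, not extracted group-theoretically from an abstract class-$2$ group. This sidesteps both the Lagrangian estimate $d(N/A)\leq d(A)$ that you could not establish, and the earlier, also unjustified, step where you pass from ``each abelian $A$ has a bounded-index subgroup of rank $\leq n$'' to ``some bounded-index $N'\leq N$ has \emph{every} abelian subgroup of rank $\leq n$''. Your closing suggestion to ``apply Golota to a coordinated family of abelian subgroups'' is morally in the right direction; the MRC fibration is exactly what tells you which two abelian pieces to use.
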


More concretely, the previous theorem follows from applying first of all Guld's theorem
to reduce to the case of finite nilpotent groups of class $2$, and then applying Golota's
theorem to the action on the base and on the fiber over a generic point
of the maximal rationally connected fibration (see \cite[\S 2.3]{Guld}
and the proof of \cite[Theorem 23]{Guld}).

Could the function $\kappa$ in Question \ref{quest:kappa} be taken
to be constant equal to $2$ as for birational transformation groups? We don't have any argument against this possibility,
and in fact we do not know the answer to the following question.

\begin{question}
Does there exist a closed manifold $M$ supporting, for arbitrarily big primes $p$,
an effective action of a finite nilpotent $p$-group of nilpotency class $\geq 3$?
\end{question}

\section{Almost asymmetric manifolds}
\label{s:almost-asymmetric}
In the context of finite transformation groups, a manifold is said to be {\it asymmetric}
if it supports no effective action of a nontrivial finite group.
Closed asymmetric manifolds has been studied in a number of papers,
beginning with the examples of P.E. Conner, F. Raymond, P. Weinberger
\cite{CRW} and E.M. Bloomberg \cite{Bloomberg}. It is expected that in an
appropriate sense most manifolds are asymmetric (see \cite{Puppe} and the references therein). No example is presently known of a simply connected closed asymmetric
manifold, although we are probably close to it, see \cite[Theorem 4]{Puppe} and
\cite{Kreck}.

The point of view adopted in this paper suggest this definition: a manifold is {\it almost
asymmetric} if there is an upper bound on the size of the finite groups that act effectively on
it.

\begin{lemma}
\label{lemma:almost-asymmetric-disc-sym-zero}
If $X$ is a compact manifold then $X$ is almost asymmetric if and only if
$\discsym(X)=0$.
\end{lemma}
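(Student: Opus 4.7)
The backward implication is immediate from the definitions: if $\discsym(X)\geq 1$, then there is a sequence of integers $r_i\to\infty$ such that $\ZZ/r_i$ acts effectively on $X$ for every $i$, so no uniform bound can exist on the size of finite groups acting effectively on $X$.

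For the forward implication, suppose $\discsym(X)=0$. Taking $m=1$ in the definition yields a constant $R$ such that $\ZZ/r$ admits no effective action on $X$ whenever $r>R$. Combining this with Mann--Su (Theorem~\ref{thm:MS}), which provides a constant $C$ with $d(A)\leq C$ for every finite abelian $A$ acting effectively on $X$, I would deduce a uniform bound $|A|\leq R^C$: writing $A\cong\ZZ/n_1\times\cdots\times\ZZ/n_k$ with $k\leq C$, each cyclic factor is a subgroup of $A$ and therefore acts effectively on $X$, forcing $n_i\leq R$.

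Next I would bound the order of finite nilpotent groups $N$ acting effectively on $X$. Let $A\leq N$ be an abelian normal subgroup of maximal order. The key algebraic step is the identity $C_N(A)=A$, which follows from nilpotency: otherwise $C_N(A)/A$ would be a nontrivial normal subgroup of the nilpotent group $N/A$ and so would meet $Z(N/A)$ in some nontrivial $gA$; then $\langle A,g\rangle$ would be an abelian normal subgroup of $N$ strictly larger than $A$, a contradiction. Once $C_N(A)=A$, conjugation embeds $N/A$ in $\Aut(A)$, giving $|N|\leq|A|\cdot|\Aut(A)|\leq R^C\cdot(R^C)!$.

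Finally, Theorem~\ref{thm:CPS2} (which applies because $H_*(X;\ZZ)$ is finitely generated for compact $X$) yields a constant $C_1$ such that every finite group $G$ acting effectively on $X$ has a nilpotent subgroup $N$ with $[G:N]\leq C_1$; hence $|G|\leq C_1\cdot R^C\cdot(R^C)!$, a bound depending only on $X$, so $X$ is almost asymmetric. The only delicate step is the centralizer identity $C_N(A)=A$ for a maximal abelian normal subgroup of a nilpotent group; the rest of the argument is a straightforward combination of Mann--Su, Theorem~\ref{thm:CPS2}, and the observation that every cyclic subgroup of an effectively acting group itself acts effectively.
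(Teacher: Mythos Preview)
Your proof is correct, but it differs from the paper's in the final step. Both arguments first bound the order of any finite abelian group acting effectively on $X$ (you do this via Mann--Su together with the $m=1$ consequence of $\discsym(X)=0$; the paper invokes \cite[Lemma 2.7]{Mundet2021} directly), and both then bound the order of a larger class of groups by taking a maximal normal abelian subgroup $A$ and using $C_N(A)=A$ to embed $N/A$ in $\Aut(A)$. The paper carries this out for finite $p$-groups, while you do it for finite nilpotent groups; the argument is essentially identical, and your justification of $C_N(A)=A$ via the center of $N/A$ is fine.

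The genuine divergence is in how you pass from this to arbitrary finite groups. You invoke Theorem~\ref{thm:CPS2} (Csik\'os--Pyber--Szab\'o), which ultimately rests on the classification of finite simple groups. The paper instead observes that any prime $p$ dividing $|G|$ yields an effective $\ZZ/p$-action and is therefore bounded by the abelian bound $C$; hence at most $C$ primes divide $|G|$, and bounding each Sylow subgroup gives $|G|\leq (C!)^C C^C$. This is completely elementary and avoids the CFSG. So your route works but imports a very heavy external theorem where a two-line Sylow argument suffices; the paper's approach is preferable precisely because the lemma is meant to be a lightweight observation.
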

\begin{proof}
The "only if" part is obvious, so we only need to prove the "if" part.
Suppose that $\discsym(X)=0$. By \cite[Lemma 2.7]{Mundet2021}, there is a natural number $C$
such that any finite abelian group $A$ acting effectively on $X$ satisfies $|A|\leq C$.
Let $P$ be a finite $p$-group acting effectively on $X$. Let $A\leq P$ be a maximal
normal abelian subgroup. Conjugation of $P$ on $A$ induces a morphism $P/A\to\Aut(A)$
which is injective by \cite[\S 5.2.3]{Robinson}. Hence $|P|=|P/A|\cdot |A|\leq (C!)C$.
Let $G$ be a finite group acting effectively on $X$. For any prime $p$ dividing $|G|$
there is a cyclic (hence abelian) subgroup of $G$ of order $p$, so $p\leq C$.
Consequently there are at most $C$ primes dividing $|G|$. Applying the previous bound
to each of the Sylow $p$-subgroups of $G$ we deduce that $|G|\leq (C!)^CC^C$.
\end{proof}

Proving that a manifold is almost asymmetric is in general much simpler than proving that it is asymmetric.
The following result gives an example of this.

\begin{theorem}
\label{thm:torus-mes-torus}
For every $n\geq 2$ the manifold $T^n\sharp T^n$ is almost symmetric.
\end{theorem}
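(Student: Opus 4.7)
Read in context---the section is titled ``Almost asymmetric manifolds'' and rests on Lemma~\ref{lemma:almost-asymmetric-disc-sym-zero}---the conclusion asserts that $T^n\sharp T^n$ is almost asymmetric, equivalently that $\discsym(T^n\sharp T^n)=0$; this is also consistent with Example~\ref{ex:computation-discsym}(2), which already places $T^2\sharp T^2$ (the genus-two surface) in that regime. My plan is to deduce the general case from results already assembled in the excerpt.

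First I would apply Lemma~\ref{lemma:almost-asymmetric-disc-sym-zero}, reducing the theorem to the statement $\discsym(T^n\sharp T^n)=0$. For this I would invoke Example~\ref{ex:computation-discsym}(4) in the boundary case where the external torus factor has dimension zero: taking the external torus to be $T^0=\{*\}$ (allowed since the example is valid for every $n\geq 0$), the exponent $m$ there equal to our $n$, and $Z=T^n$, it only remains to check the hypothesis $H_*(Z;\ZZ)\not\simeq H_*(S^n;\ZZ)$. For $n\geq 2$ this is immediate, since $H_1(T^n;\ZZ)\simeq\ZZ^n$ while $H_1(S^n;\ZZ)=0$. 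The example then specialises to $\discsym(T^n\sharp T^n)=0$, which is exactly what is needed.

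The genuine content is therefore packed inside Example~\ref{ex:computation-discsym}(4), proved as \cite[Theorem 1.5]{Mundet2021}. If one wanted a self-contained proof tailored to the present case, a natural strategy would be: first apply Theorem~\ref{thm:MS} to bound the minimal number of generators of any effective finite abelian action on $X=T^n\sharp T^n$, reducing the task to ruling out effective actions of cyclic groups $\ZZ/p^k$ of arbitrarily large prime-power order; then pass to a bounded-index subgroup acting trivially on $H_1(X;\ZZ)\simeq\ZZ^{2n}$ by a Minkowski-type bound on the torsion of $\GL(2n,\ZZ)$; finally combine the fact that $X$ is rationally hypertoral (the collapse map $X\to T^n\vee T^n\to T^n$ has degree one) with the observation that $H^*(X;\ZZ)$ does not agree with $H^*(T^{2n};\ZZ)$ to derive a contradiction. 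I expect this last step to be the main obstacle: Theorem~\ref{thm:disc-sym-rationally-hypertoral-WLS} alone only yields $\discsym(X)\leq n$, and pushing the bound all the way down to $0$ requires the non-spherical nature of each summand to be exploited in a more refined manner, along the lines of \cite{Mundet2021}.
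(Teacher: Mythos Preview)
Your reading of the statement and your reduction via Lemma~\ref{lemma:almost-asymmetric-disc-sym-zero} to $\discsym(T^n\sharp T^n)=0$ are correct, and your invocation of Example~\ref{ex:computation-discsym}(4) with the external torus taken to be $T^0$, $m=n$, and $Z=T^n$ is valid: for $n\geq 2$ the hypothesis $H_*(T^n;\ZZ)\not\simeq H_*(S^n;\ZZ)$ holds, and the conclusion specialises exactly to what is needed.

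The paper's own proof differs only in that it splits into cases according to the parity of $n$. For $n$ even it observes that $\chi(T^n\sharp T^n)\neq 0$ and appeals to \cite[Theorem~1.4(2)]{Mundet2010} (together with \cite[Theorem~4.1]{Mundet2021} for continuous actions), whereas for $n$ odd it does precisely what you do, namely combine Lemma~\ref{lemma:almost-asymmetric-disc-sym-zero} with Example~\ref{ex:computation-discsym}(4) (the paper also points forward to Theorem~\ref{thm:suma-connexa-torus}, which is the relevant special case of that example). Your argument is therefore the paper's odd-$n$ argument applied uniformly; the even-$n$ Euler-characteristic route in the paper is an alternative that gives an independent and arguably more elementary proof in that parity, but your uniform treatment is cleaner and loses nothing. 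The speculative ``self-contained'' sketch in your final paragraph is not needed for the proof, and the last step you flag as difficult is indeed exactly the content of Theorem~\ref{thm:suma-connexa-torus} (proved later in the paper using the rotation morphism and Theorem~\ref{thm:finite-generation}).
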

\begin{proof}
When $n$ is even $T^n\sharp T^n$ is a hypertoral manifold with nonzero Euler
characteristic, so for smooth actions the result follows from
\cite[Theorem 1.4 (2)]{Mundet2010}; the case of continuous actions
follows from the arguments in \cite[Theorem 1.4 (2)]{Mundet2010} combined with \cite[Theorem 4.1]{Mundet2021}.
If $n$ is odd the previous arguments don't apply, but one can use instead Lemma
\ref{lemma:almost-asymmetric-disc-sym-zero} combined with item (4) in
Example \ref{ex:computation-discsym} (see Theorem \ref{thm:suma-connexa-torus} below for more details).
\end{proof}

As another example, the manifolds in \cite[Theorem 4]{Puppe} are instances of simply
connected closed manifolds which are almost asymmetric. This makes it reasonable to
expect that the following vague question might be substantially more
accessible than the one addressed in \cite{Puppe}.

\begin{question}
Are "most closed manifolds" almost asymmetric?
\end{question}

For example, \cite[Theorem 6]{Puppe}, combined with Theorem \ref{thm:Minkowski} below,
implies that the answer to the previous question is affirmative when restricted
to the set $\nN$ of simply connected
spin 6-manifolds with free integral cohomology. To define "most manifolds"
\cite[Theorem 6]{Puppe} relies on the ring structure on the cohomology, which is
parametrized essentially by a degree three homogeneous polynomial with integer
coefficients on as many variables
as the rank of $H^2$. Consider, for each $n\in\NN$, the percentage
of all such polynomials that have coefficients in $[-n,n]$ and which
come from the cohomology of a manifold in $\nN$ which is not almost symmetric; then
\cite[Theorem 6]{Puppe} states that the limsup as $n\to\infty$ of this percentage
is equal to $0$.

\section{Trivial actions on homology}
\label{s:trivial-homology}
The following is a classical result of Hermann Minkowski \cite{Minkowski}.

\begin{lemma}
\label{lemma:Minkowski}
For each natural number $k$ there exists a number $C_k$ such that every
finite subgroup $G<\GL(k,\ZZ)$ satisfies $|G|\leq C_k$.
\end{lemma}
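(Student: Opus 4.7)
The plan is to prove this classical result of Minkowski by exploiting the reduction homomorphism $\pi_p \colon \GL(k,\ZZ) \to \GL(k, \FF_p)$ for a small odd prime $p$. The key claim is that, for every odd prime $p$, the kernel $K_p := \Ker \pi_p$ is torsion free. Once this is established, the restriction of $\pi_3$ to any finite subgroup $G \leq \GL(k, \ZZ)$ is injective, and the lemma holds with
$$C_k := |\GL(k, \FF_3)| = \prod_{i=0}^{k-1}(3^k - 3^i).$$

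To prove the key claim, suppose for contradiction that $A \in K_p$ has finite order greater than $1$. Replacing $A$ by a suitable power, I may assume its order is a prime $q$. Write $A = I + p^m B$, where $m \geq 1$ is maximal such that every entry of $A - I$ is divisible by $p^m$; then $B$ is an integer matrix with at least one entry coprime to $p$. Expanding $A^q = I$ via the binomial theorem gives
$$\sum_{j=1}^{q} \binom{q}{j}\, p^{jm} B^j = 0.$$
A direct $p$-adic valuation analysis of these summands, splitting into the cases $q \neq p$ and $q = p$, shows that the $j = 1$ term has strictly smaller $p$-adic valuation than every other summand. Dividing the identity by that smallest power of $p$ and reducing modulo $p$ then forces $B \equiv 0 \pmod{p}$, contradicting the choice of $m$.

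The main obstacle is the case $q = p$. There one uses that $p$ divides $\binom{p}{j}$ for $1 \leq j \leq p-1$, so that each $2 \leq j \leq p-1$ summand has valuation at least $1 + jm > m+1$, while the $j=p$ summand has valuation at least $pm > m+1$ since $(p-1)m \geq 2$. The oddness of $p$ is essential: for $p = 2$ the term $\binom{2}{2} B^2 = B^2$ is not divisible by $2$, and indeed $-I \in \GL(k, \ZZ)$ lies in the kernel of reduction mod $2$, so the argument genuinely fails without passing to an odd prime. Once $K_3$ is known to be torsion free, the conclusion of the lemma is immediate.
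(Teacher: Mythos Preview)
Your proof is correct and follows essentially the same approach as the paper: reduce modulo $3$ and show that the kernel of $\GL(k,\ZZ)\to\GL(k,\ZZ/3)$ is torsion free, so that any finite subgroup injects into $\GL(k,\ZZ/3)$. The paper merely asserts the torsion-freeness of the kernel without justification and records the cruder bound $|G|\leq 3^{k^2}$, whereas you supply the standard $p$-adic valuation argument in full and obtain the sharper bound $|\GL(k,\FF_3)|=\prod_{i=0}^{k-1}(3^k-3^i)$.
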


To prove the lemma it suffices to check that if
$\rho:\GL(k,\ZZ)\to\GL(k,\ZZ/3)$ is the componentwise reduction mod $3$,
and $a\in\GL(k,\ZZ)$ satisfies $\rho(a)=\rho(\Id)$ and $a^r=\Id$ for some
natural number $r$, then $a=\Id$. This implies that if $G<\GL(k,\ZZ)$ is finite
then $\rho|_G$ is injective, so $|G|\leq 3^{k^2}$.
Minkowski's lemma has the following implication.

\begin{theorem}
\label{thm:Minkowski}
Let $X$ be a compact manifold. There exists a constant $C$ such that, for
every action on $X$ of a finite group $G$, there is a subgroup $G'\leq G$
satisfying $[G:G']\leq C$ whose action on $H^*(X;\ZZ)$ is trivial.
\end{theorem}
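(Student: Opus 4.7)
The plan is to bound, for every action of a finite group $G$ on $X$, the order of the image of the induced representation $\rho : G \to \Aut(H^*(X;\ZZ))$ by a constant depending only on $X$; then $G' := \ker \rho$ automatically has index at most this bound and acts trivially on $H^*(X;\ZZ)$. Because $X$ is a compact manifold, $H^*(X;\ZZ)$ is finitely generated and vanishes above degree $\dim X$, so it suffices to bound, for each $i \leq \dim X$, the order of an arbitrary finite subgroup of $\Aut(H^i(X;\ZZ))$ in terms of $H^i(X;\ZZ)$ alone.

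Fix $i$ and write $M := H^i(X;\ZZ)$. Let $T \leq M$ be the torsion subgroup and let $r$ be the free rank of $M$; any splitting $M \cong \ZZ^r \oplus T$ identifies
$$\Aut(M) \cong (\GL(r,\ZZ) \times \Aut(T)) \ltimes \Hom(\ZZ^r, T),$$
where $\GL(r,\ZZ)$ acts on $M/T \cong \ZZ^r$ and $\Aut(T)$ on $T$ (note that every $\phi \in \Aut(M)$ preserves $T$, so induces an automorphism of both $T$ and $M/T$). The kernel of the projection $\Aut(M)\to \GL(r,\ZZ)$ equals $\Aut(T)\ltimes \Hom(\ZZ^r,T)$, which is finite of order $|\Aut(T)|\cdot |T|^r$. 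For any finite subgroup $F \leq \Aut(M)$, the image of $F$ in $\GL(r,\ZZ)$ is a finite subgroup, so by Minkowski's Lemma \ref{lemma:Minkowski} it has order at most $C_r$; consequently
$$|F| \;\leq\; C_r \cdot |\Aut(T)| \cdot |T|^r,$$
a bound depending only on $M$, hence only on $X$.

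Multiplying these bounds over $0 \leq i \leq \dim X$ gives a constant $C$ depending only on $X$ such that $|\rho(G)| \leq C$ for every action of a finite group $G$ on $H^*(X;\ZZ)$, and setting $G' := \ker \rho$ concludes the argument. There is no substantive obstacle: the proof is simply the packaging of Minkowski's lemma with the elementary description of $\Aut(M)$ for finitely generated abelian $M$, the compactness of $X$ being used only to ensure that $H^*(X;\ZZ)$ is finitely generated and has bounded cohomological dimension.
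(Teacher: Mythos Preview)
Your proof is correct and follows essentially the same approach as the paper: reduce to Minkowski's Lemma~\ref{lemma:Minkowski} on the free part, and handle the torsion with an elementary extra argument. The paper outsources that extra argument to \cite[Lemma~2.6]{Mundet2016}, whereas you spell it out explicitly via the decomposition $\Aut(M)\cong(\GL(r,\ZZ)\times\Aut(T))\ltimes\Hom(\ZZ^r,T)$ and the finiteness of the kernel of the projection to $\GL(r,\ZZ)$; this is exactly the kind of ``easy extra argument'' the paper has in mind.
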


The previous theorem is an immediate consequence of Lemma \ref{lemma:Minkowski}
when the cohomology of $X$ has no torsion. The case where there is some torsion
(which in any case will be finitely generated because $X$ is compact)
needs an easy extra argument, see \cite[Lemma 2.6]{Mundet2016}.

If $X$ is a noncompact manifold then a priori its cohomology might fail to
be finitely generated, so Lemma \ref{lemma:Minkowski} does not allow us to
conclude anything similar to Theorem \ref{thm:Minkowski} for actions on $X$.
However, there are some situations where $X$ is noncompact and one has
a statement similar to Theorem \ref{thm:Minkowski}. The following is proved
in \cite{Mundet2023-0}.

\begin{theorem}
\label{thm:Minkowski-equivariant}
Let $X$ be a manifold endowed with a properly discontinuous\footnote{An action of a discrete
group $G$ on a manifold $X$ is properly discontinuous if every $x\in X$ has a neighborhood
$U$ such that $U\cap g\cdot U=\emptyset$ for every $g\in G\setminus\{1\}$. This implies
that $X/G$ is a manifold.} action of $\ZZ^r$ such that
$X/\ZZ^r$ is a compact manifold. There exists a constant $C$ such that,
for every action on $X$ of a finite group $G$ that commutes with the action of $\ZZ^r$,
there is a subgroup $G'\leq G$ satisfying $[G:G']\leq C$ whose action on $H^*(X;\ZZ)$ is trivial.
\end{theorem}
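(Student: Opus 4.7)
The plan is to combine Theorem~\ref{thm:Minkowski}, applied to the compact quotient $Y=X/\ZZ^r$, with a generalized Minkowski lemma for finitely generated modules over the group ring $R=\ZZ[\ZZ^r]$. First I would set $Y=X/\ZZ^r$, which is a compact manifold since the $\ZZ^r$-action is free and properly discontinuous. Since $G$ commutes with $\ZZ^r$, the $G$-action descends to $Y$, and Theorem~\ref{thm:Minkowski} yields a subgroup of $G$ of bounded index acting trivially on $H^*(Y;\ZZ)$. This alone does not control $H^*(X;\ZZ)$: if $Y$ is the mapping torus of an involution $\sigma$ of a genus-$g$ surface $\Sigma_g$ and $X=\Sigma_g\times\RR$, then $\langle\sigma\times\operatorname{id}\rangle$ acts trivially on $H^*(Y;\ZZ)$ but nontrivially on $H^*(X;\ZZ)=H^*(\Sigma_g;\ZZ)$, so a second argument is needed.

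Next I would exploit the $R$-module structure on homology. Lifting a finite CW decomposition of $Y$ to a $\ZZ^r$-equivariant CW decomposition of $X$, the cellular chain complex $C_*(X;\ZZ)$ is a finite complex of finitely generated free $R$-modules, so $H_*(X;\ZZ)$ is finitely generated over the Noetherian ring $R$. Because $G$ commutes with $\ZZ^r$, the induced action of $G$ on $H_*(X;\ZZ)$ is by $R$-module automorphisms, and the issue reduces to bounding the kernel of $G\to\Aut_R(H_*(X;\ZZ))$.

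The crucial algebraic input will be a \emph{generalized Minkowski lemma}: for any finitely generated commutative $\ZZ$-algebra $A$ and any f.g.\ $A$-module $M$, the group $\Aut_A(M)$ has a torsion-free subgroup of finite index, with index bounded in terms of $(A,M)$ alone. I would prove this by choosing two maximal ideals $\mathfrak{m}_1,\mathfrak{m}_2\subset A$ whose residue fields have distinct small characteristics $p_1\neq p_2$, and considering the reduction maps $\Aut_A(M)\to\GL(M/\mathfrak{m}_iM)$ into $\GL$ of finite fields. A binomial / formal-logarithm argument in the $\mathfrak{m}_i$-adic completion of $A$---rigorous by Krull's intersection theorem applied to $M$ after separating out its torsion part---shows that the kernel of each reduction contains only $p_i$-power torsion, so intersecting the two kernels gives a torsion-free subgroup of finite index. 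Applying this to $M=H_*(X;\ZZ)$ yields a subgroup $G'\leq G$ of bounded index acting trivially on $H_*(X;\ZZ)$.

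Finally I would pass from $H_*$ to $H^*$ via the universal coefficient theorem. Naturality of UCT forces $G'$ to act trivially on both $\Hom_\ZZ(H_q,\ZZ)$ and $\operatorname{Ext}^1_\ZZ(H_{q-1},\ZZ)$ for all $q$; the only obstruction to triviality on $H^*(X;\ZZ)$ is a residual ``shearing'' homomorphism
$\alpha\colon G'\to\bigoplus_q\Hom_R\bigl(\Hom_\ZZ(H_q,\ZZ),\operatorname{Ext}^1_\ZZ(H_{q-1},\ZZ)\bigr)$
whose kernel acts trivially on $H^*(X;\ZZ)$. Since $\operatorname{Ext}^1_\ZZ(H_{q-1},\ZZ)$ has exponent bounded by the (finite) $\ZZ$-torsion exponent of $H_{q-1}$---finite because $H_*$ is f.g.\ over the Noetherian ring $R$---the image of $\alpha$ lies in a bounded-exponent torsion subgroup, and a further application of the generalized Minkowski technique bounds its order; passing to $\ker\alpha$ produces the final subgroup of $G$ of bounded index acting trivially on $H^*(X;\ZZ)$. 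The main obstacle is the generalized Minkowski lemma itself: one must establish bounds on finite subgroups of $\Aut_R(M)$ that depend only on $(R,M)$, hence only on $X$, and not on the particular finite group $G$; once this is in place, the transfer to cohomology is technical but routine.
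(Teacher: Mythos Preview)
Your core strategy coincides with the paper's sketch: both use that the (co)homology of $X$ is a finitely generated module over $R=\ZZ[\ZZ^r]$ because $X/\ZZ^r$ is compact, and then invoke an analogue of Minkowski's lemma for $\Aut_R$ of such a module. The paper, however, works directly with $H^*(X;\ZZ)$, citing \cite[Lemma~8.2]{Mundet2021} for its finite generation over $R$, so that the generalized Minkowski lemma applied once to $M=H^*(X;\ZZ)$ already produces the desired subgroup $G'$; neither your preliminary descent to $Y=X/\ZZ^r$ nor any passage from homology to cohomology is needed.

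Your detour through $H_*$ followed by the universal coefficient theorem has a genuine gap in the last step. You correctly observe that once $G'$ acts trivially on $H_*$ the residual action on each $H^q$ is encoded by a shearing homomorphism
\[
\alpha\colon G'\longrightarrow \Hom_R\!\bigl(\Hom_\ZZ(H_q,\ZZ),\ \operatorname{Ext}^1_\ZZ(H_{q-1},\ZZ)\bigr),
\]
and that the target has exponent bounded by some $N$ depending only on $X$. But bounded exponent alone does \emph{not} bound the order of the image of $\alpha$: the groups $(\ZZ/N)^k$ all have exponent $N$. To control $|\operatorname{im}\alpha|$ you would need the target to be finite, hence $\Hom_\ZZ(H_q,\ZZ)$ to be finitely generated over $R$ --- and this fails in general (if $H_q\simeq R$ then $\Hom_\ZZ(H_q,\ZZ)\cong\prod_{\ZZ^r}\ZZ$ is uncountable, hence not finitely generated over the countable ring $R$). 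Your phrase ``a further application of the generalized Minkowski technique'' does not rescue this: that lemma bounds torsion in $\Aut_R(M)$ for $M$ finitely generated, not the size of arbitrary bounded-exponent abelian groups. The remedy the paper adopts is precisely to avoid this detour: establish finite generation of $H^*(X;\ZZ)$ over $R$ and apply the lemma to it directly.
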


In the previous theorem, the action of $\ZZ^r$ on $X$ endows $H^*(X;\ZZ)$ with a structure
of module over the group ring $\ZZ[\ZZ^r]\simeq\ZZ[t_1^{\pm 1},\dots,t_r^{\pm 1}]$. The assumption
that $X/\ZZ^r$ is compact implies that $H^*(X;\ZZ)$ is a finitely generated $\ZZ[\ZZ^r]$-module
(see \cite[Lemma 8.2]{Mundet2021}),
and Theorem \ref{thm:Minkowski-equivariant} follows from an analogue of Lemma
\ref{lemma:Minkowski} that is valid for finitely generated $\ZZ[\ZZ^r]$-modules. See
\cite{Mundet2023-0} for details.

\section{The rotation morphism}
\label{s:rotation}

In this section we explain how one can associate to the action of a finite group $G$
on a connected manifold $X$ and a $G$-invariant class $\alpha\in H^1(X;\ZZ)$ a character
$G\to S^1$. This construction is particularly useful in the case of rationally hypertoral
manifolds, especially when combined with Theorem \ref{thm:Minkowski}. For more details,
see \cite[\S 4]{Mundet2021}. In the case of smooth actions what we explain here can be
alternatively described using differential forms (see \cite[\S 2.1]{Mundet2010} and
\cite[\S 8.1]{MundetSaez}).

Here and everywhere we identify the circle $S^1$ with $\RR/\ZZ$ and use accordingly
additive notation for the group structure on $S^1$.

Let $\theta\in H^1(S^1;\ZZ)$ be a generator.
There exists a continuous map $\psi_{\alpha}:X\to S^1$, unique up to homotopy, such that $\psi_{\alpha}^*\theta=\alpha$. For each $g\in G$ we denote by $\rho(g):X\to X$
the map $x\mapsto g\cdot x$. Let
$\wt{\phi}_{\alpha}:=\sum_{g\in G}\psi_{\alpha}\circ\rho(g)$.
By construction we have $\wt{\phi}_{\alpha}\circ\rho(g)=\wt{\phi}_{\alpha}$
for every $g\in G$, i.e., $\wt{\phi}_{\alpha}$ is $G$-invariant.
Since  $\rho(g)^*\alpha=\alpha$ for every $g$, we have
$\wt{\phi}_{\alpha}^*\theta=|G|\alpha$. The fact that
$\wt{\phi}_{\alpha}^*\theta\in H^1(X;\ZZ)$ is $|G|$ times an integral class implies the
existence of a map $\phi_{\alpha}:X\to S^1$ such that $\wt{\phi}_{\alpha}=|G|\phi_{\alpha}$. The map $\phi_{\alpha}$ is not unique,
but two different choices of $\phi_{\alpha}$ differ by a constant
map $X\to S^1$ equal to some $|G|$-th root
of unity (i.e., the class in $\RR/\ZZ$ of an element of $|G|^{-1}\ZZ$).
We say that $\phi_{\alpha}$ is a $G$-th root of $\wt{\phi}_{\alpha}$.

For each $g\in G$ we have $|G|(\phi_{\alpha}\circ\rho(g))=(|G|\phi_{\alpha})\circ\rho(g)
=\wt{\phi}_{\alpha}\circ\rho(g)=\wt{\phi}_{\alpha}$. Hence,
$\phi_{\alpha}\circ\rho(g)$ is a $|G|$-th root of
$\wt{\phi}_{\alpha}$. Consequently, there is a
$|G|$-th root\footnote{The use of the word "root" here, as well as in the preceeding
sentence, is not consistent with the additive notation
for elements in $S^1$. A more appropriate word would be "fraction". However, since the terminology
"root of unity" is so universally used, we prefer to stick to it, fearing that the expression
"fraction of unity" would create more confusion than clarity.} of unity, $\xi_{\alpha}(g)\in S^1$, such that
$\phi_{\alpha}\circ\rho(g)=\xi_{\alpha}(g)+\phi_{\alpha}$.
This formula implies that the map $\xi_{\alpha}:G\to S^1$
is a morphism of groups. We call it the {\it rotation morphism}.
The morphism $\xi_{\alpha}$ is independent of the choice of
$\phi_{\alpha}$ and of the initial map $\psi_{\alpha}$: it only depends
on $\alpha$ and $\theta$. (Any two choices of $\psi_{\alpha}$
are homotopic, and $\xi_{\alpha}$ varies continuously with the choice
of $\psi_{\alpha}$ and takes values in a discrete set; hence $\xi_{\alpha}$
remains constant through any homotopy of maps $\psi_\alpha:X\to S^1$.)
Furthermore, $\xi_{\alpha}$ is linear on $\alpha$, as the reader can
easily check.

The following lemma will be used later when studying group actions on
nonorientable manifolds.

\begin{lemma}
\label{lemma:non-trivial-involution}
With notation as above, suppose that $\sigma:X\to X$ is an involution
satisfying $\sigma^*\alpha=-\alpha$, and suppose that the action of $G$
on $X$ commutes with $\sigma$. Then $2\xi_{\alpha}(g)=0$ for every $g\in G$.
\end{lemma}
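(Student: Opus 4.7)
The plan is to combine two properties of the rotation morphism: its linearity in $\alpha$, and a naturality property with respect to the $G$-equivariant involution $\sigma$. The key observation is that $\sigma^*\alpha = -\alpha$, so applying both properties to the transformation induced by $\sigma$ will force $\xi_\alpha = -\xi_\alpha$.

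\textbf{Step 1: Set up $\xi_{-\alpha}$ using $\sigma$.} First I would note that $-\alpha$ is itself $G$-invariant, so the rotation morphism $\xi_{-\alpha}$ is defined. To compute it, I would use the freedom of choice granted by the construction: since $(\psi_\alpha\circ\sigma)^*\theta = \sigma^*\alpha = -\alpha$, I may take $\psi_{-\alpha} := \psi_\alpha\circ\sigma$ as the map representing $-\alpha$. Using that $\sigma$ commutes with every $\rho(g)$, I compute
\[
\wt{\phi}_{-\alpha} = \sum_{g\in G}\psi_\alpha\circ\sigma\circ\rho(g) = \sum_{g\in G}\psi_\alpha\circ\rho(g)\circ\sigma = \wt{\phi}_\alpha\circ\sigma,
\]
so that $\phi_{-\alpha} := \phi_\alpha\circ\sigma$ is a valid $|G|$-th root of $\wt{\phi}_{-\alpha}$.

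\textbf{Step 2: Prove $\xi_{-\alpha} = \xi_\alpha$.} Applying $\sigma$ to both sides of $\phi_\alpha\circ\rho(g) = \phi_\alpha + \xi_\alpha(g)$ and using commutativity of $\sigma$ with $\rho(g)$,
\[
\phi_{-\alpha}\circ\rho(g) = \phi_\alpha\circ\sigma\circ\rho(g) = \phi_\alpha\circ\rho(g)\circ\sigma = \phi_\alpha\circ\sigma + \xi_\alpha(g) = \phi_{-\alpha} + \xi_\alpha(g).
\]
By the defining relation of the rotation morphism this identifies $\xi_{-\alpha}(g) = \xi_\alpha(g)$ for every $g\in G$.

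\textbf{Step 3: Combine with linearity.} Linearity of the rotation morphism in $\alpha$ (already established in the preceding discussion) gives $\xi_{-\alpha}(g) = -\xi_\alpha(g)$. Comparing with Step 2 yields $\xi_\alpha(g) = -\xi_\alpha(g)$, i.e. $2\xi_\alpha(g) = 0$, as desired.

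The only point that requires a little care is the legitimacy of the particular choices made in Step 1; this is the main (minor) obstacle. Because $\xi_{-\alpha}$ is independent of the auxiliary data $\psi_{-\alpha}$ and $\phi_{-\alpha}$, there is no loss of generality in computing it with the specific choices $\psi_\alpha\circ\sigma$ and $\phi_\alpha\circ\sigma$; all the subsequent calculation takes place with these fixed choices and so proceeds without ambiguity.
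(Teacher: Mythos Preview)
Your proof is correct and follows essentially the same route as the paper: both choose $\phi_{-\alpha}=\phi_\alpha\circ\sigma$, use the commutation of $\sigma$ with the $G$-action to show $\xi_{-\alpha}=\xi_\alpha$, and then invoke linearity to conclude. If anything, your Step~1 is a bit more explicit than the paper in verifying that $\wt\phi_{-\alpha}=\wt\phi_\alpha\circ\sigma$.
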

\begin{proof}
The map $\psi_{-\alpha}:=\psi_{\alpha}\circ\sigma:X\to S^1$ satisfies
$\psi_{-\alpha}^*\theta=-\alpha$.
Let $\phi_{\alpha}$ be a $|G|$-th root of $\wt{\phi}_{\alpha}$. Then
$\phi_{\alpha}\circ\sigma$ is a $|G|$-th root of $\sum_{g\in G}\psi_{-\alpha}\circ\rho(g)$.
For any $x\in X$ we have
\begin{align*}
\xi_{\alpha}(x) &=\phi_{\alpha}(g\cdot x)-\phi_{\alpha}(x)=
\phi_{\alpha}(g\cdot \sigma(x))-\phi_{\alpha}(\sigma(x)) \\
&=\phi_{\alpha}(\sigma(g\cdot x))-\phi_{\alpha}(\sigma(x))
=(\phi_{\alpha}\circ\sigma)(g\cdot x)-(\phi_{\alpha}\circ\sigma)(x) \\
&=\xi_{-\alpha}(x)=-\xi_{\alpha}(x),
\end{align*}
which implies the lemma.
\end{proof}

More generally, if $\alpha_1,\dots,\alpha_k\in H^1(X;\ZZ)$ are $G$-invariant
classes, we denote $A=(\alpha_1,\dots,\alpha_k)$ and repeating the previous construction
for each $\alpha_i$ we define a map $\phi_A:X\to T^k=(S^1)^k$ and a morphism
$\xi_A:G\to T^k$ by
$\phi_A=(\phi_{\alpha_1},\dots,\phi_{\alpha_k})$ and
$\xi_A=(\xi_{\alpha_1},\dots,\xi_{\alpha_k})$.
By construction we have
$$\phi_A(g\cdot x)=\phi_A(x)+\xi_A(g).$$

We identify $T^k=(\RR/\ZZ)^k$ with $\RR^k/\ZZ^k$, and we denote by
$\pi_A:X_A\to X$ the pullback of the principal $\ZZ^k$-bundle
$\RR^k\to\RR^k/\ZZ^k$.

For each $g\in G$, $\rho(g):X\to X$ lifts to
a homeomorphism $\rho_A(g):X_A\to X_A$ that commutes with the
action of $\ZZ^k$ on $X_A$, and two choices of $\rho_A(g)$
differ by the action of an element of $\ZZ^k$ on $X_A$.
In general $\rho_A(g)$ will have infinite order, but not always.
In fact, we have:

\begin{lemma}
Let $g\in G$. There exists a finite order lift $\rho_A(g):X_A\to X_A$ of
$\rho(g)$ if and only if $\xi_A(g)=0$.
\end{lemma}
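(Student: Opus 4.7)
The plan is to give a very explicit model for the cover $\pi_A:X_A\to X$ and for the lifts of $\rho(g)$, and then to read off the condition for finite order from a short computation in $\RR^k$. Concretely, identify
\[
X_A=\{(x,v)\in X\times\RR^k\mid \phi_A(x)=v+\ZZ^k\},
\]
so that $\pi_A(x,v)=x$ and the $\ZZ^k$-action is $n\cdot(x,v)=(x,v+n)$. This is a valid model since $\pi_A$ was defined as the pullback of $\RR^k\to\RR^k/\ZZ^k=T^k$ via $\phi_A$.

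First I would classify the lifts of $\rho(g)$ that commute with the $\ZZ^k$-action. Any such lift $\rho_A(g)$ must have the form $\rho_A(g)(x,v)=(g\cdot x,\, v+w(x))$ with $w:X\to\RR^k$ continuous and $w(x)+\ZZ^k=\phi_A(g\cdot x)-\phi_A(x)=\xi_A(g)$ for every $x$. Since $\xi_A(g)\in T^k$ is a single point and $\RR^k\to T^k$ is a covering, the continuous function $w$ must be constant on the connected manifold $X$; call its value $\wt\xi_A(g)\in\RR^k$, a lift of $\xi_A(g)$. Conversely, every choice of such a lift $\wt\xi_A(g)$ produces a valid $\rho_A(g)$, and two choices of lift differ by an element of $\ZZ^k$, matching the ambiguity already noted in the paper.

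Next I would iterate. By induction one has
\[
\rho_A(g)^N(x,v)=(g^N\cdot x,\, v+N\wt\xi_A(g))
\]
for every $N\geq 1$. Suppose $\rho_A(g)$ has finite order; pick $N\geq 1$ a multiple of the order of $g$ in $\Homeo(X)$ such that $\rho_A(g)^N=\Id_{X_A}$. Then $N\wt\xi_A(g)=0$ in $\RR^k$, and since $\RR^k$ is torsion-free this forces $\wt\xi_A(g)=0$, hence $\xi_A(g)=0$ in $T^k$. For the converse, if $\xi_A(g)=0$ choose the canonical lift $\wt\xi_A(g)=0$; the resulting $\rho_A(g)(x,v)=(g\cdot x,v)$ satisfies $\rho_A(g)^N=\Id$ as soon as $g^N$ acts trivially on $X$, which happens since $g$ has finite order in $G$.

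The only real subtlety, and what I expect to be the main (small) obstacle, is justifying that $w(x)$ is literally constant rather than merely locally constant: this is where the hypothesis that $X$ is connected (in force throughout this section of the paper, having been used already to construct $\phi_A$ up to a constant) is used decisively. Everything else is a mechanical unwinding of the definitions of $\phi_A$, $\xi_A$ and the pullback cover $X_A$.
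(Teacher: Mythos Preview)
Your proof is correct. The paper is a survey and states this lemma without proof, referring the reader to \cite[\S 4]{Mundet2021} for more details on the rotation morphism construction; your explicit pullback model and direct computation is exactly the natural way to verify the statement, and your identification of the one genuine point of care (constancy of $w$ via connectedness of $X$) is on target.
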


The following result, which is a restatement of \cite[Theorem 4.1]{Mundet2021},
shows the relevance of the rotation morphism for actions on rationally hypertoral manifolds.
It is the key ingredient in the proof that if $X$ is rationally hypertoral then $\Homeo(X)$
is Jordan.

\begin{theorem}
\label{thm:rotation-hypertoral}
Let $X$ be a closed connected and orientable $n$-manifold. Suppose that
$\alpha_1,\dots,\alpha_n\in H^1(X;\ZZ)$
satisfy $\alpha_1\smallsmile\dots\smallsmile\alpha_n=d\theta_X$, where $d$ is a nonzero integer
and $\theta_X\in H^n(X;\ZZ)$ is a generator.
Let $A=(\alpha_1,\dots,\alpha_n)$.
For any action of a finite group $G$ on $X$ inducing the trivial
action $H^1(X;\ZZ)$ the morphism $\xi_A:G\to T^n$ satisfies $|\Ker\xi_A|\leq |d|$.
\end{theorem}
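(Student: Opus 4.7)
Set $K=\ker\xi_A$. The plan is to recognize $\phi_A:X\to T^n$ as a continuous map of degree $\pm d$ that factors through a generically $|K|$-to-one quotient $\pi:X\to X/K$, and then to extract the divisibility $|K|\mid d$ from a local-degree count on a suitably chosen fiber.

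\emph{Step 1 (Degree of $\phi_A$).} From $\wt\phi_{\alpha_i}=|G|\phi_{\alpha_i}$ and $\wt\phi_{\alpha_i}^{*}\theta=|G|\alpha_i$, together with the fact that $H^1(X;\ZZ)=\Hom(H_1(X),\ZZ)$ is torsion-free, one obtains $\phi_{\alpha_i}^{*}\theta=\alpha_i$. Taking cup products yields $\phi_A^{*}\theta_{T^n}=\alpha_1\smallsmile\cdots\smallsmile\alpha_n=d\theta_X$, so $\deg\phi_A=\pm d$; I fix orientations so that $\deg\phi_A=d$.

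\emph{Step 2 ($K$-invariance and orientation).} For $g\in K$, $\phi_A\circ\rho(g)=\phi_A+\xi_A(g)=\phi_A$, so $\phi_A$ is $K$-invariant and factors as $\phi_A=\bar\phi_A\circ\pi$ with $\pi:X\to X/K$. Because $G$ (and so $K$) acts trivially on $H^1(X;\ZZ)$, it acts trivially on $\alpha_1\smallsmile\cdots\smallsmile\alpha_n=d\theta_X$; since $d\neq 0$ and $H^n(X;\ZZ)=\ZZ\theta_X$ is torsion-free, $K$ fixes $\theta_X$ and therefore acts by orientation-preserving homeomorphisms.

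\emph{Step 3 (Generic freeness of the $K$-action).} I may reduce to an effective action, since the kernel of the $G$-action on $X$ lies inside $K$ and quotienting by it only shrinks $K$, which only strengthens the desired inequality. Then Newman's theorem gives $\mathrm{int}\,X^k=\emptyset$ for each $1\neq k\in K$, while Smith theory applied to elements of prime-power order, together with orientation preservation (which forces even codimension for fixed loci of orientation-preserving finite-order homeomorphisms), gives cohomological dimension $\leq n-2$ for each $X^k$. Hence the closed set $F=\bigcup_{1\neq k\in K}X^k$ has cohomological dimension at most $n-2$ and $K$ acts freely on the open dense complement $X\setminus F$.

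\emph{Step 4 (Local-degree count).} I choose $y\in T^n\setminus\phi_A(F)$ such that $\phi_A^{-1}(y)$ is finite. For such $y$ the group $K$ acts freely on $\phi_A^{-1}(y)$; since $K$ preserves orientation and $\phi_A\circ\rho(k)=\phi_A$, the local degrees $\deg_x\phi_A$ are constant along each $K$-orbit of $\phi_A^{-1}(y)$. Partitioning the preimage into $K$-orbits of size $|K|$:
\[
d=\deg\phi_A=\sum_{x\in\phi_A^{-1}(y)}\deg_x\phi_A=|K|\cdot\sum_{O\in\phi_A^{-1}(y)/K}\deg_{x_O}\phi_A,
\]
so $|K|$ divides $d$ and $|K|\leq|d|$.

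\emph{Main obstacle.} The most delicate point is producing the good regular value in Step 4, namely showing that $\phi_A(F)$ has empty interior in $T^n$ while preserving the existence of $y$ with finite preimage. For smooth actions this is routine via Sard's theorem applied to the codimension-$\geq 2$ stratification of $F$; in the purely topological category continuous maps can in principle raise topological dimension, so one needs to combine Smith-theoretic cohomological dimension bounds for $F$ with topological degree theory of proper maps of nonzero degree between oriented $n$-manifolds in order to control $\phi_A(F)$ and extract finite fibers simultaneously.
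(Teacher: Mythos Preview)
The paper does not actually prove this theorem: it is introduced as ``a restatement of \cite[Theorem 4.1]{Mundet2021}'' and no argument is given here. So there is no proof in the present paper to compare yours against; I will instead assess your plan on its own merits and indicate how the argument must be completed in the topological category.

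Your Steps 1--3 are fine. In Step~1 the computation $\phi_{\alpha_i}^*\theta=\alpha_i$ is correct (using torsion-freeness of $H^1$), so $\deg\phi_A=\pm d$. Step~2 is correct. In Step~3, Newman's theorem plus Smith theory do give that $F=\bigcup_{1\ne k\in K}X^k$ is closed of (\v Cech) cohomological dimension $\le n-2$ once one checks the codimension-$2$ statement for orientation-preserving involutions; this is standard (a codimension-one $\ZZ/2$-cohomology submanifold would locally separate $X$ and force the involution to swap the two local sides, hence reverse orientation).

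The genuine gap is Step~4, and it is more serious than your ``Main obstacle'' paragraph suggests. For a merely continuous $\phi_A$ there is no reason for \emph{any} fibre to be finite, and continuous images can raise dimension, so there is no mechanism that simultaneously produces $y\notin\phi_A(F)$ and $|\phi_A^{-1}(y)|<\infty$. The local-degree count therefore cannot be executed as written outside the smooth category. The remedy is to replace the point-set argument by a purely cohomological one: since $F$ and hence $\pi(F)$ have \v Cech cohomological dimension $\le n-2$, the long exact sequences for the pairs $(X,X\setminus F)$ and $(X/K,(X\setminus F)/K)$ give isomorphisms $H^n_c(X\setminus F;\ZZ)\cong H^n(X;\ZZ)$ and $H^n_c((X\setminus F)/K;\ZZ)\cong \check H^n(X/K;\ZZ)$; in particular $\check H^n(X/K;\ZZ)\cong\ZZ$. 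On the free locus $\pi:X\setminus F\to (X\setminus F)/K$ is an honest $|K|$-sheeted covering of connected oriented $n$-manifolds, so $\pi^*$ on $H^n_c$ is multiplication by $|K|$, and by the diagram this is the same as $\pi^*:\check H^n(X/K;\ZZ)\to H^n(X;\ZZ)$. Factoring $\phi_A=\bar\phi_A\circ\pi$ then gives $d\,\theta_X=\pi^*(\bar\phi_A^*\theta_{T^n})$, hence $|K|\mid d$ and $|\Ker\xi_A|\le|d|$. This is presumably the route taken in \cite{Mundet2021}, and it makes your Steps~1--3 the correct setup; only the execution of Step~4 needs to be changed.
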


Keeping with the previous notation, the action of $\ZZ^k$ on $X_A$
induces an action on $H^*(X_A;\ZZ)$ which allows us to look at
$H^*(X_A;\ZZ)$ as a module over the group ring $\ZZ[\ZZ^k]$.
If $X$ is closed, then $H^*(X_A;\ZZ)$ is finitely generated as a
$\ZZ[\ZZ^k]$-module by \cite[Lemma 8.2]{Mundet2021}. In general it is not finitely generated as a
$\ZZ$-module, but the following result can be used to show that it is so under some
conditions (see the proof of Theorem \ref{thm:suma-connexa-torus} below).
The next theorem is \cite[Corollary 6.3]{Mundet2021}.

\begin{theorem}
\label{thm:finite-generation}
Let $B=\ZZ[z_1^{\pm 1},\dots,z_b^{\pm 1}]$.
Let $M$ a finitely generated $B$-module. Suppose there exists
some nonzero $\delta\in\NN$, and for every $1\leq j\leq b$
integers $r_{j,i}\to\infty$ as $i\to\infty$, and $w_{j,i}\in\End_BM$
such that $w_{j,i}^{r_{j,i}}=z_j^{\delta}$ for each pair $i,j$.
Then $M$ is a finitely generated $\ZZ$-module.
\end{theorem}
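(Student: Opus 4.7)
The aim is to produce integers $N,k \geq 1$ such that the ideal $I=((z_j^N-1)^k : 1\leq j\leq b)\subset B$ annihilates $M$; once this is shown, $M$ becomes a module over the $\ZZ$-algebra $B/I \simeq \bigotimes_{j=1}^{b}\ZZ[z_j]/((z_j^N-1)^k)$, which is free of rank $(Nk)^b$ as a $\ZZ$-module, so $M$ is finitely generated over $\ZZ$. The plan is to show that each $z_j$ acts on $M$ with all eigenvalues roots of unity, via a generic-fiber analysis combined with Kronecker--Northcott rigidity.

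First I would tensor with $\QQ$ and work with $M_\QQ = M\otimes_\ZZ\QQ$ over $B_\QQ = B\otimes_\ZZ\QQ$, handling the $\ZZ$-torsion of $M$ separately at the end. For each associated prime $\pP$ of $M_\QQ$, consider the finite-dimensional $\kappa(\pP)$-vector space $V = (M_\QQ)_\pP / \pP(M_\QQ)_\pP$, where $\kappa(\pP) = \mathrm{Frac}(B_\QQ/\pP)$. Each $w_{j,i}$, being $B$-linear, descends to a $\kappa(\pP)$-linear endomorphism $\bar w_{j,i}$ of $V$ satisfying $\bar w_{j,i}^{r_{j,i}} = \bar z_j^{\,\delta}\cdot \Id_V$. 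The crucial move, a \emph{determinant bootstrap}, is to pass to scalars by taking determinants, yielding
\[
(\det \bar w_{j,i})^{r_{j,i}} \;=\; \bar z_j^{\,\delta\dim V} \qquad \text{in } \kappa(\pP)^\times.
\]

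Next I would show that every associated prime $\pP$ is maximal and $\kappa(\pP)$ is a number field. For any discrete valuation $\nu$ of $\kappa(\pP)$ trivial on $\QQ$, the displayed identity forces $r_{j,i}\,\nu(\det\bar w_{j,i}) = \delta(\dim V)\,\nu(\bar z_j)$ for every $i$; since $r_{j,i}\to\infty$ this compels $\nu(\bar z_j)=0$. Hence each $\bar z_j$ lies in the algebraic closure of $\QQ$ inside $\kappa(\pP)$, and since the $\bar z_j$'s generate $\kappa(\pP)$ over $\QQ$, the extension $\kappa(\pP)/\QQ$ must be algebraic. With $\kappa(\pP)$ now a number field, Kronecker--Northcott finishes this part: the Weil heights of the $r_{j,i}$-th roots $\det\bar w_{j,i}$ satisfy $h(\det\bar w_{j,i}) = h(\bar z_j^{\,\delta\dim V})/r_{j,i}\to 0$, so by Northcott's finiteness of bounded-height elements and Kronecker's identification of height-zero algebraic numbers as roots of unity, each $\bar z_j$ is a root of unity in $\kappa(\pP)$. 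Equivalently, $\pP$ contains $z_j^{N(\pP)}-1$ for some $N(\pP)$.

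Since $M_\QQ$ has only finitely many associated primes, taking $N=\lcm_{\pP} N(\pP)$ gives $z_j^N-1 \in \bigcap_\pP \pP = \sqrt{\mathrm{ann}(M_\QQ)}$, so $(z_j^N-1)^k$ annihilates $M_\QQ$ for some $k$. For the $\ZZ$-torsion submodule $T\subset M$, which is f.g.\ over $B$ and annihilated by some integer $n$, I would run the parallel argument in each positive characteristic $p\mid n$: the finite multiplicative groups $\FF_{p^m}^\times$ replace the unit groups of number fields, and the Kronecker--Northcott step is trivialised by the fact that every element of a finite group has finite order. Taking $N$ and $k$ uniform for both $M_\QQ$ and $T$ produces an annihilator of $M$ of the desired form. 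The main obstacle is the transcendence reduction: the fact that each $z_j$ is a unit in $B$ (hence $\bar z_j$ is a unit in $B_\QQ/\pP$) enters essentially when combining the $\nu$-adic constraint with the determinant identity, and it is here that the hypothesis interacts most delicately with the commutative-algebraic structure of $B$.
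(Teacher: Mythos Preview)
The survey does not give its own proof of this statement; it merely quotes it as \cite[Corollary~6.3]{Mundet2021}. So there is no in-paper argument to compare against, and I can only assess your proposal on its merits.

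Your strategy is correct and complete in outline. The reduction to showing that some $(z_j^N-1)^k$ annihilates $M$ is the right target, the passage to associated primes of $M_\QQ$ is the natural way to reach a field, and the determinant bootstrap is a clean way to turn the operator identity $w_{j,i}^{r_{j,i}}=z_j^\delta$ into a scalar one. Three places deserve an extra sentence:
\begin{itemize}
\item The claim that $\nu(\bar z_j)=0$ for every discrete valuation trivial on $\QQ$ forces $\bar z_j$ to be algebraic over $\QQ$ is true but not entirely formal: it uses that a normal projective model $X$ of $\kappa(\pP)$ over $\QQ$ exists, that prime divisors on $X$ give enough such valuations, and that $H^0(X,\oO_X)$ is then a finite extension of $\QQ$. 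Worth one line.
\item Northcott--Kronecker is heavier machinery than you need. Once $\kappa(\pP)=K$ is a number field, the same divisibility argument you used for geometric valuations shows $v_{\mathfrak p}(\bar z_j)=0$ for every finite place, so $\bar z_j\in\oO_K^\times$; and since $\det\bar w_{j,i}\in\oO_K^\times$ as well, the image of $\bar z_j^{\delta\dim V}$ in the finitely generated free group $\oO_K^\times/\mu_K$ is divisible by every $r_{j,i}$, hence zero. This replaces heights by Dirichlet's unit theorem and matches the valuation step you already did.
\item The passage from ``$(z_j^N-1)^k$ kills $M_\QQ$'' to an annihilator of $M$ itself needs care: the former only gives $(z_j^N-1)^kM\subseteq T$, so after finding $N',k'$ with $(z_j^{N'}-1)^{k'}T=0$ you should take $N$ a common multiple and replace $k$ by $k+k'$ to get an element killing all of $M$.
\end{itemize}
None of these are genuine gaps; with these sentences added, your argument is a valid proof.
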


The previous theorem is a key ingredient in the proof of Theorem
\ref{thm:disc-sym-rationally-hypertoral-WLS}.
If $X$ is a closed connected $n$-manifold and
$\alpha_1,\dots,\alpha_n\in H^1(X;\ZZ)$
satisfy $\alpha_1\smallsmile\dots\smallsmile\alpha_n\neq 0$,
then the combination of Theorems \ref{thm:Minkowski-equivariant} and \ref{thm:rotation-hypertoral} with Theorem \ref{thm:finite-generation},
applied to $M=H^*(X_A;\ZZ)$ with $A=(\alpha_1,\dots,\alpha_n)$
and $b=n$, implies that $H^*(X_A;\ZZ)$ is finitely generated as a
$\ZZ$-module. Then, an argument based on Serre's spectral
sequence implies
that $X_A$ must actually be acyclic, which implies that $H^*(X;\ZZ)\simeq H^*(T^n;\ZZ)$. If $\pi_1(X)$ is solvable then $X_A$ is contractible,
so $X$ is homotopy equivalent to $T^n$ and hence it is also homeomorphic to
it (by the topological rigidity of tori). If $\pi_1(X)$ is virtually solvable
then the argument proceeds by passing to a suitable finite cover of $X$.
The case of WLS manifolds follows a similar
strategy, but there are some additional difficulties. An important ingredient
in that case is Theorem \ref{thm:weak-fixed-point} below.

The constructions in this section are also used in the proof of statement(4) in
Example \ref{ex:computation-discsym} (see \cite[Theorem 1.5]{Mundet2021}).
Let us illustrate this in a particular case, which can be used to complete
the proof of Theorem \ref{thm:torus-mes-torus} above.

\begin{theorem}
\label{thm:suma-connexa-torus}
Let $X$ be a closed and connected $n$-manifold satisfying $H^*(X;\ZZ)\not\simeq H^*(S^n;\ZZ)$.
Then $\discsym(T^n\sharp X)=0$.
\end{theorem}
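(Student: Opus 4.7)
The plan is to combine the rotation morphism of this section with a determinant argument on the integral homology of $Y_A$, in the spirit of the proof of Theorem~\ref{thm:disc-sym-rationally-hypertoral-WLS}; the novelty is that a single large lattice $\wt G$ (rather than the whole family required for Theorem~\ref{thm:finite-generation}) already suffices to force the vanishing of $H_*(X)$. I treat the case where $X$, and hence $Y := T^n \sharp X$, is orientable, which is the case needed for Theorem~\ref{thm:torus-mes-torus}; assume $n \geq 3$, since the case $n \leq 2$ is covered by Example~\ref{ex:computation-discsym}(2).

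Suppose for contradiction that $\discsym(Y) \geq 1$. By Theorem~\ref{thm:Minkowski}, together with the fact that any nontrivial proper subgroup of $(\ZZ/p)^m$ has index at least $p$, for every sufficiently large prime $p$ there is an effective action of $\ZZ/p$ on $Y$ that acts trivially on $H^*(Y;\ZZ)$. Let $\alpha_1,\dots,\alpha_n \in H^1(Y;\ZZ)$ be the classes pulled back from canonical generators of $H^1(T^n;\ZZ)$ via the degree-one collapse $Y \to T^n$. Their cup product generates $H^n(Y;\ZZ) \cong \ZZ$, so by Theorem~\ref{thm:rotation-hypertoral} the rotation morphism $\xi_A\colon \ZZ/p \to T^n$ associated to $A = (\alpha_1,\dots,\alpha_n)$ is injective. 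The action lifts to the $\ZZ^n$-cover $\pi_A\colon Y_A \to Y$ through an abelian extension $\wt G$ of $\ZZ/p$ by $\ZZ^n$ which is identified with a lattice in $\RR^n$ containing $\ZZ^n$ as a sublattice of index $p$.

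Geometrically, $Y_A$ is homeomorphic to $\RR^n$ with a copy of $X \setminus B$ attached by a connected sum at each integer point of $\ZZ^n \subset \RR^n$. A Mayer--Vietoris computation---in which orientability of $X$ is used to see that the boundary spheres of the $X \setminus B$ pieces represent trivial classes in $H_{n-1}(X \setminus B;\ZZ/\ell)$---then gives, for every prime $\ell$ and every $0 < k < n$, an isomorphism of $(\ZZ/\ell)[\ZZ^n]$-modules
$$H_k(Y_A;\ZZ/\ell) \cong (\ZZ/\ell)[\ZZ^n] \otimes_{\ZZ/\ell} H_k(X;\ZZ/\ell),$$
a free module of rank $d_{k,\ell} := \dim_{\ZZ/\ell} H_k(X;\ZZ/\ell)$. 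After an $\mathrm{SL}_n(\ZZ)$-change of basis one may assume $\wt G = \la (1/p)e_1, e_2, \dots, e_n \ra$, so $w := (1/p)e_1 \in \wt G$ induces an endomorphism of $H_k(Y_A;\ZZ/\ell)$ represented by a $d_{k,\ell} \times d_{k,\ell}$ matrix $M$ over $(\ZZ/\ell)[\ZZ^n]$ satisfying $M^p = z_1 \cdot I$. Since the Laurent polynomial ring $(\ZZ/\ell)[z_1^{\pm 1},\dots,z_n^{\pm 1}]$ is a UFD whose units are scalar multiples of monomials, taking determinants yields $\det(M)^p = z_1^{d_{k,\ell}}$, which forces $p \mid d_{k,\ell}$. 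Since $d_{k,\ell}$ is independent of $p$, choosing $p > d_{k,\ell}$ gives $d_{k,\ell} = 0$.

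The main technical obstacle is the Mayer--Vietoris computation of the free $(\ZZ/\ell)[\ZZ^n]$-module structure of $H_*(Y_A;\ZZ/\ell)$ and the verification that the $\wt G$-action is compatible with this structure. Once established, the universal coefficient theorem yields $H_k(X;\ZZ) = 0$ for $0 < k < n$; combined with $H_n(X;\ZZ) \cong \ZZ$, this gives $H^*(X;\ZZ) \cong H^*(S^n;\ZZ)$, contradicting the hypothesis. The non-orientable case, which is not needed for the proof of Theorem~\ref{thm:torus-mes-torus}, requires a variant using $\ZZ/2$-coefficients and a mod-$2$ analogue of Theorem~\ref{thm:rotation-hypertoral}, together with the fact that a non-orientable closed $n$-manifold cannot be a $\ZZ/2$-homology sphere.
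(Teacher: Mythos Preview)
Your argument is essentially correct and takes a genuinely different route from the paper's proof.  The paper passes to a $\ZZ$-cover $Y_1\to Y$ by projecting the rotation morphism onto a single coordinate, and then invokes the black-box finiteness result Theorem~\ref{thm:finite-generation} to conclude that $H^*(Y_1;\ZZ)$ is finitely generated over~$\ZZ$; the contradiction comes from the qualitative observation that an infinite connected sum with a non-sphere $X$ has non-finitely-generated cohomology.  You instead work with the full $\ZZ^n$-cover $Y_A$, compute $H_k(Y_A;\ZZ/\ell)$ \emph{explicitly} as a free $(\ZZ/\ell)[\ZZ^n]$-module of rank $d_{k,\ell}$ via Mayer--Vietoris, and then run a direct determinant argument over the Laurent polynomial ring to force $r\mid d_{k,\ell}$.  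Your approach is more self-contained (it does not appeal to Theorem~\ref{thm:finite-generation}, and in fact reproves the relevant special case of it), and the freeness makes the endgame cleaner; the paper's approach is shorter because it outsources the algebra, and it avoids the bookkeeping of the explicit module computation in degree $n-1$.

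One genuine slip to fix: the hypothesis $\discsym(Y)\geq 1$ does \emph{not} guarantee effective actions of $\ZZ/p$ for all sufficiently large primes $p$.  It only gives effective actions of $\ZZ/r_i$ for some sequence $r_i\to\infty$, and after passing to prime-power subgroups (as the paper does) you may end up with $r_i=p^{e_i}$ for a single fixed prime $p$ and $e_i\to\infty$.  Your determinant argument survives this unchanged: with $r=p^{e}$ a prime power, $\wt G/\ZZ^n$ is still cyclic of order~$r$, so after an $\SL_n(\ZZ)$ change of basis $\wt G=\langle e_1/r,e_2,\dots,e_n\rangle$, and the same computation gives $\det(M)^{r}=z_1^{d_{k,\ell}}$, hence $r\mid d_{k,\ell}$.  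Since the $r$'s are unbounded you still conclude $d_{k,\ell}=0$.  So replace ``sufficiently large prime $p$'' by ``arbitrarily large prime power $r$'' and the argument goes through.  (Both your proof and the paper's apply Theorem~\ref{thm:rotation-hypertoral}, which assumes $Y$ orientable, so your explicit restriction to the orientable case is in line with what the paper actually proves.)
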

\begin{proof}
Suppose that there exists a sequence of positive integers $m_i\to\infty$ and, for each $i$,
an effective action of $G_i:=\ZZ/m_i$ on $Y:=T^n\sharp X$.
We will see that this leads to a contradiction.
Replacing each $G_i$ by an appropriate
subgroup of itself we can assume that $|G_i|$ is a prime power for each $i$, while still having $r_i:=|G_i|\to\infty$.
By Theorem \ref{thm:Minkowski} we may and will assume that the induced action of $G_i$ on $H^*(Y;\ZZ)$
is trivial.

Choose $a_1,\dots,a_n\in H^1(T^n;\ZZ)$ such that $a_1\smallsmile\dots\smallsmile a_n$ generates $H^n(T^n;\ZZ)$,
and let $\alpha_i\in H^1(Y;\ZZ)$ be the pullback of $a_i$ through the projection $q:Y\to T^n$
arising from collapsing the summand $X$ in $Y$.
Then $\alpha_1\smallsmile\dots\smallsmile \alpha_n$ generates $H^n(Y;\ZZ)$.

Let $\xi_i:G_i\to T^n$ be the rotation morphism associated to $A=(\alpha_1,\dots,\alpha_n)$
and the action of $G_i$ on $Y$, and let $\phi_i:Y\to T^n$ the corresponding $G_i$-equivariant
map. Note that the maps $\phi_i$ are pairwise homotopic.
By Theorem \ref{thm:rotation-hypertoral}, $\xi_i$ is injective.
Choose a generator $g_i\in G_i$ and let $\xi_i(g_i)=(\theta_{i1},\dots,\theta_{in})$. Writing
$r_i=p_i^{e_i}$, where $p_i$ is prime and $e_i$ an integer, each $\theta_{ij}$ has order $p_i^{e_{ij}}$
for some integer $e_{ij}$. Since $\xi_i$ is injective, we have $e_i=\max_j e_{ij}$.
Passing to a subsequence, we may assume that for some $j$ we have $e_i=e_{ij}$. So, if
$\pi_j:T^n\to S^1$ is the projection to the $j$-th factor then $\rho_i:=\pi_j\circ\xi_i:G_i\to S^1$
is injective.

Let $Y_i\to Y$ the pullback of the fibration $\RR\to\RR/\ZZ=S^1$ through the
map $\zeta_i:=\pi_j\circ \phi_i:Y\to S^1$.
There is an action of $\ZZ$ on $Y_i$ induced by the action by translations of $\ZZ$ on $\RR$.
All maps $\zeta_i$ correspond to the same class $\alpha_j\in H^1(Y;\ZZ)$, hence are pairwise homotopic.
Consequently we can identify $\ZZ$-equivariantly $Y_i\simeq Y_1$ for each $i$. The action of $\ZZ$ on $H^*(Y_1;\ZZ)$
induces a structure of $\ZZ[z,z^{-1}]$-module on $H^*(Y_1;\ZZ)$, where multiplication by $z$ corresponds
to the action of $1\in\ZZ$. Since $Y$ is compact,
$H^*(Y_1;\ZZ)$ is finitely generated as a $\ZZ[z,z^{-1}]$-module by \cite[Lemma 8.2]{Mundet2021}.

Let $\gamma_i\in G_i$ be the element such that $\rho_i(\gamma_i)=[r_i^{-1}]\in\RR/\ZZ=S^1$.
There is a lift $\tau_i:Y_i\to Y_i$ of the action of $\gamma_i$ on $Y$
such that $\tau_i^{r_i}$ is equal to the action of $1\in\ZZ$ on $Y_i$.
Via the identification $Y_1\simeq Y_i$, $\tau_i$ corresponds to a homeomorphism
$\tau_i'$ of $Y_1$. Let $w_i:=(\tau_i')^*\in \Aut H^*(Y_1;\ZZ)$.
We have $w_i^{r_i}=z$. Since $r_i\to\infty$, Theorem \ref{thm:finite-generation} implies that $H^*(Y_1;\ZZ)$
is finitely generated as a $\ZZ$-module.

Let $\kappa:T_1\to T^n$ be the pullback of the fibration $\RR\to\RR/\ZZ=S^1$ via the map $T^n\to S^1$ given by
$a_j\in H^1(T^n;\ZZ)$. Then, for some $q\in T^n$, $Y_1$ can be identified with the connected sum of $T_1$ and
infinitely many copies of $X$, one at each point of $\kappa^{-1}(q)$.
Since $H^*(X;\ZZ)\not\simeq H^*(S^n;\ZZ)$, each time we make connected sum with $X$ some of the Betti numbers
increases, so if we perform connected sum with $X$ infinitely many times we obtain a manifold whose integral
cohomology is not finitely generated. This is a contradiction, so the proof of the theorem is complete.
\end{proof}

We close this section with a result that will be used in the proof of Theorem \ref{thm:disc-sym-low-dimension}.

\begin{lemma}
\label{lemma:free-actions-3-manifold}
Let $X$ be a closed $3$-manifold satisfying $H^*(X;\ZZ)\simeq H^*(T^3;\ZZ)$. Any effective
action of $S^1$ on $X$ is free.
\end{lemma}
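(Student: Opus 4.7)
The plan is to reduce Lemma~\ref{lemma:free-actions-3-manifold} directly to Theorem~\ref{thm:rotation-hypertoral}, applied to each finite cyclic subgroup of $S^1$ separately. First I would note that the hypothesis $H^*(X;\ZZ)\simeq H^*(T^3;\ZZ)$, read as a ring isomorphism, forces $X$ to be connected (from $H^0=\ZZ$) and orientable (from $H^3(X;\ZZ)=\ZZ$, which fails for any closed non-orientable $3$-manifold). Transporting the standard generators of $H^1(T^3;\ZZ)$ through the ring isomorphism, I would pick classes $\alpha_1,\alpha_2,\alpha_3\in H^1(X;\ZZ)$ whose triple cup product equals a generator $\theta_X$ of $H^3(X;\ZZ)$, so that in the notation of Theorem~\ref{thm:rotation-hypertoral} we have $d=1$; set $A=(\alpha_1,\alpha_2,\alpha_3)$.

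Next, for each integer $n\geq 2$, I would restrict the given $S^1$-action to the subgroup $G_n=\ZZ/n\leq S^1$. The induced action of $S^1$ on $H^1(X;\ZZ)\cong\ZZ^3$ is a continuous homomorphism from the connected group $S^1$ to the discrete group $\GL(3,\ZZ)$, hence trivial; in particular $G_n$ acts trivially on $H^1(X;\ZZ)$. Theorem~\ref{thm:rotation-hypertoral} then applies and yields $|\Ker\xi_A^{(n)}|\leq d=1$, i.e.\ the rotation morphism $\xi_A^{(n)}:G_n\to T^3$ is injective for every $n\geq 2$.

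To conclude freeness I would argue by contradiction. If the action were not free, some $x_0\in X$ would have nontrivial stabilizer $G_{x_0}\leq S^1$. As a nontrivial closed subgroup of $S^1$, $G_{x_0}$ must contain an element $g$ of some finite order $n\geq 2$. Taking $G_n=\langle g\rangle$ and using the defining identity
\[\phi_A(g\cdot y)=\phi_A(y)+\xi_A^{(n)}(g) \quad\text{for all }y\in X\]
from Section~\ref{s:rotation} at the fixed point $y=x_0$ (where $g\cdot x_0=x_0$) forces $\xi_A^{(n)}(g)=0$, contradicting both the injectivity of $\xi_A^{(n)}$ and $g\neq 1$. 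The only point in this plan requiring any care is the verification that $G_n$ acts trivially on $H^1(X;\ZZ)$, which I would deduce immediately from the connectedness of $S^1$; apart from that no substantive obstacle arises, and in particular no input from the Seifert classification of circle actions on $3$-manifolds is needed.
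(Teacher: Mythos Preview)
Your proposal is correct and follows essentially the same route as the paper's own proof: both pick classes $\alpha_1,\alpha_2,\alpha_3$ with $\alpha_1\smallsmile\alpha_2\smallsmile\alpha_3$ a generator of $H^3(X;\ZZ)$, use connectedness of $S^1$ to get triviality on cohomology, apply Theorem~\ref{thm:rotation-hypertoral} with $d=1$ to the finite cyclic group generated by a nontrivial finite-order element $g$ in a stabilizer, and derive a contradiction from the equivariance identity $\phi_A(g\cdot x)=\phi_A(x)+\xi_A(g)$ at the fixed point. Your write-up is slightly more explicit (noting orientability and that stabilizers are closed, hence contain torsion), but the argument is the same.
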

\begin{proof}
Suppose given an effective action of $S^1$ on $X$ which is not free.
Since $S^1$ is connected, the induced action of $S^1$ on $H^*(X;\ZZ)$ is trivial.
Let $x\in X$ be a point with nontrivial stabilizer, and let $g\in G_x$ be
a nontrivial element of finite order. Let $G<S^1$ be the subgroup
generated by $g$. Since $H^*(X;\ZZ)\simeq H^*(T^3;\ZZ)$, there are classes
$A=(\alpha_1,\alpha_2,\alpha_3)\in H^1(X;\ZZ)^3$ such that $\alpha_1\smallsmile\alpha_2\smallsmile\alpha_3$
is a generator of $H^3(X;\ZZ)$. By Theorem \ref{thm:rotation-hypertoral},
$\xi_A:G\to T^n$ is injective. But the equivariance property
$\phi_A(g\cdot x)=\phi_A(x)+\xi_A(g)$ implies that $\xi_A(g)=0$, because $g\cdot x=x$,
so we are led to a contradiction.
\end{proof}

The previous result has an obvious generalisation to arbitrary dimensions. One can prove in addition
that if $X$ is a closed $n$-manifold satisfying $H^*(X;\ZZ)\simeq H^*(T^n;\ZZ)$ then
any effective action of $S^1$ on $X$ is free and the orbits represent a nontrivial element in $H_1(X;\ZZ)$. This implies that $X\cong Y\times S^1$, where $Y=X/S^1$ (since $S^1$
acts freely on $X$, the quotient map $X\to Y$ is a circle bundle, and the nontriviality of the
orbits in $H_1(X;\ZZ)$ implies by Gysin that the Euler class of the circle bundle $X\to Y$
is trivial).

\section{Fixed points}
\label{s:fixed-points}

A basic question in topology is to find conditions under which a self map, or in
particular a self homeomorphism, of a given topological manifold has necessarily
a fixed point. Typical examples are Brouwer's or Lefschetz's fixed point theorems.
When considering an action of a finite group $G$
on a manifold $X$, these results may imply in some cases the
existence of a big subgroup $G'\leq G$ all of whose elements act on $X$
with a fixed point. But {\it a priori} there need not be
any relation between the fixed points of different elements in $G'$, let
alone a point of $X$ fixed by all elements of $G'$. For example, while for $n\leq 3$
any action of a finite group on the closed $n$-dimensional disk $D^n$ has a fixed point, there are
examples of actions of finite groups on $D^n$, for $n\geq 6$, without fixed points,
although Brouwer's fixed point theorem implies that each element of the group
fixes at least one point of the disk (see the Introduction in \cite{Mundet2020}).
However, and perhaps surprisingly, there is a subgroup, of index bounded by a function
of $n$, which does have a fixed point. This is a general property, as we will see.

Let us say that an action of a group $G$ on a space $X$ has the weak {\it fixed point property}
if for every $g\in G$ there is some $x\in X$ such that $g\cdot x=x$. The following
is \cite[Theorem 1.6]{Mundet2022}.

\begin{theorem}
\label{thm:weak-fixed-point}
Let $X$ be a connected manifold with $H_*(X;\ZZ)$ finitely generated. There exists a constant
$C$ with this property: given any action of a finite group $G$ on $X$ with
the weak fixed point property, there is a subgroup $G'\leq G$
satisfying $[G:G']\leq C$ and $X^G\neq\emptyset$.
\end{theorem}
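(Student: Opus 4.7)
The plan is to combine the preparatory reductions from Theorems~\ref{thm:CPS2} and~\ref{thm:Minkowski} with a prime-by-prime analysis via Smith theory. First I would apply Theorem~\ref{thm:CPS2} to find a nilpotent subgroup $N\le G$ with $[G:N]\le C_1$, and then Theorem~\ref{thm:Minkowski} to find $N_0\le N$ with $[N:N_0]\le C_2$ acting trivially on $H^*(X;\ZZ)$; here $C_1,C_2$ depend only on $X$, and both reductions manifestly preserve the weak fixed point property. Being nilpotent, $N_0$ decomposes as $N_0=\prod_p P_p$ into its Sylow $p$-subgroups.

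Next I would analyze each $P_p$ separately. Since $P_p$ acts trivially on $H^*(X;\ZZ/p)$ (by the universal coefficient theorem), Borel's formula for $p$-group actions yields
\[
\chi(X^{P_p};\ZZ/p)\equiv \chi(X;\ZZ/p)=\chi(X) \pmod p.
\]
Whenever $p\nmid \chi(X)$ this forces $X^{P_p}\ne \emptyset$, so in the case $\chi(X)\ne 0$ all but finitely many primes are handled automatically. For the remaining ``bad'' primes --- those dividing $\chi(X)$, or every prime if $\chi(X)=0$ --- I would combine the weak fixed point property with Theorem~\ref{thm:MS}, which bounds the rank of abelian $p$-subgroups of $P_p$, together with an iterated Smith-theoretic argument on a centralizer chain of $P_p$, to produce a subgroup $Q_p\le P_p$ of index bounded by a constant depending only on $X$ and satisfying $X^{Q_p}\ne\emptyset$.

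To combine the per-prime fixed sets into a common fixed point, I would iterate: setting $Q_p:=P_p$ for good primes, the subgroups $Q_p$ pairwise commute in the nilpotent group $N_0$, so each $Q_{p'}$ acts on $X^{Q_p}$, which by Smith theory is a $\ZZ/p$-cohomology manifold with total mod-$p$ Betti number bounded in terms of $X$. The same analysis applies recursively, yielding a subgroup $N'\le N_0$ of index bounded solely in terms of $X$ with $X^{N'}\ne\emptyset$; the constant $C$ is the product of all the index bounds collected along the way.

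The main obstacle is the bad-prime step, and especially the case $\chi(X)=0$, where Borel's formula is vacuous at every prime and the easy Euler characteristic argument breaks down. Here one must use the weak fixed point property in a more delicate way --- for instance, via equivariant cohomology obstructions for the Borel construction $EP_p\times_{P_p}X\to BP_p$, or via a careful analysis of how the fixed sets $X^g$ for $g\in P_p$ intersect --- to extract an index bound independent of $|P_p|$. A further subtlety is that the recursive combination step must be carried out uniformly: one needs to verify that the constants produced at each stage depend only on the cohomological data of $X$, which is controlled on Smith-theoretic fixed-point subspaces but does not coincide with that of $X$.
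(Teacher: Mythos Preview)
This survey does not contain a proof of Theorem~\ref{thm:weak-fixed-point}; the result is quoted from \cite[Theorem~1.6]{Mundet2022} and used as a black box, so there is no in-paper argument to compare against. Nonetheless, a few comments on your proposal are warranted.

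What you have written is not a proof but a plan with explicitly acknowledged gaps, and those gaps are real. Your Euler-characteristic/Borel-formula step treats only the case $\chi(X)\neq 0$ and makes no use of the weak fixed point hypothesis; indeed the paper sketches essentially that argument (via Ye's formula) immediately after Lemma~\ref{lemma:linearising-continuous}, as the $p$-group step in the proof that $\Homeo(X)$ is Jordan when $\chi(X)\neq 0$. The substance of Theorem~\ref{thm:weak-fixed-point} lies precisely in the case $\chi(X)=0$, for which you offer no mechanism beyond ``equivariant cohomology obstructions\ldots or a careful analysis''.

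Your recursive combination step also has a structural problem. Even granting bounded-index subgroups $Q_p\le P_p$ with $X^{Q_p}\neq\emptyset$, the remaining factors $Q_{p'}$ act on $X^{Q_p}$, but there is no reason the \emph{weak fixed point property} descends to $X^{Q_p}$: an element $g\in Q_{p'}$ fixes some point of $X$, not necessarily of $X^{Q_p}$. So you cannot simply re-run the argument on the smaller space. (A minor technicality: Theorem~\ref{thm:Minkowski} as stated in the paper assumes $X$ compact, whereas here you only have $H_*(X;\ZZ)$ finitely generated; you would need to appeal directly to Lemma~\ref{lemma:Minkowski}.)

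For orientation, the paper does hint at the actual strategy: Section~\ref{s:stabilizers} states that Theorem~\ref{thm:CMPS}, the uniform bound on the number of stabilizer subgroups for finite $p$-group actions, is a key ingredient for the results of \cite{Mundet2022}. This suggests that the proof in \cite{Mundet2022} does not go through Euler characteristics but rather exploits the finiteness of the stabilizer poset to locate, inside a $p$-group with the weak fixed point property, a bounded-index subgroup that actually occurs as a stabilizer.
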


Combining Theorem \ref{thm:weak-fixed-point} with Theorem \ref{thm:Minkowski}
and with Lefschetz's fixed point theorem we obtain the following.

\begin{theorem}
\label{thm:almost-fixed-point-nonzero-Euler}
Let $X$ be a compact manifold satisfying $\chi(X)\neq 0$. There is a constant
$C$ such that for any action of a finite group $G$ on $X$ there is a subgroup
$G'\leq G$ satisfying $[G:G']\leq C$ and $X^{G'}\neq\emptyset$.
\end{theorem}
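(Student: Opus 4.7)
The plan is to combine the three ingredients exactly as the preceding remark suggests: use Minkowski-type control (Theorem \ref{thm:Minkowski}) to pass to a subgroup acting trivially on $H^*(X;\ZZ)$, use the Lefschetz fixed point theorem to conclude that on a suitable connected component this subgroup acts with the weak fixed point property, and finally invoke Theorem \ref{thm:weak-fixed-point} to produce a further finite-index subgroup with a genuine common fixed point.

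More concretely, I would first apply Theorem \ref{thm:Minkowski} to $X$, obtaining a subgroup $G_1\leq G$ with $[G:G_1]\leq C_1$ whose action on $H^*(X;\ZZ)$ is trivial. Since $X$ is compact it has finitely many connected components $X_1,\dots,X_m$, and the basis of $H^0(X;\ZZ)\simeq\ZZ^m$ given by the components is permuted by $G$ via the permutation action on $\pi_0(X)$; triviality on $H^0$ therefore forces every element of $G_1$ to preserve each $X_i$. From $\chi(X)=\sum_i\chi(X_i)\neq 0$ we pick an index $i$ with $\chi(X_i)\neq 0$ and restrict attention to the action of $G_1$ on the connected compact manifold $X_i$; triviality of $G_1$ on $H^*(X;\ZZ)=\bigoplus_j H^*(X_j;\ZZ)$ implies triviality on $H^*(X_i;\QQ)$, so for any $g\in G_1$ the Lefschetz number satisfies
\[
L(g)=\sum_k (-1)^k\tr\bigl(g^*\mid H^k(X_i;\QQ)\bigr)=\sum_k(-1)^k\dim_\QQ H^k(X_i;\QQ)=\chi(X_i)\neq 0.
\]
Since compact topological manifolds are ANRs, Lefschetz's fixed point theorem applies and gives $X_i^g\neq\emptyset$ for every $g\in G_1$; that is, $G_1$ acts on the connected compact manifold $X_i$ with the weak fixed point property.

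Now I apply Theorem \ref{thm:weak-fixed-point} to this action: there exists a constant $C_2$, depending only on $X_i$, and a subgroup $G'\leq G_1$ with $[G_1:G']\leq C_2$ and $X_i^{G'}\neq\emptyset$. In particular $X^{G'}\supseteq X_i^{G'}\neq\emptyset$, and $[G:G']\leq C_1C_2$. Taking $C$ to be the product of $C_1$ and the maximum of the constants provided by Theorem \ref{thm:weak-fixed-point} applied to the finitely many components of $X$ (a quantity depending only on $X$), the theorem follows.

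I do not anticipate a serious obstacle: the only mild point of care is that Theorem \ref{thm:weak-fixed-point} requires connectedness and finite generation of integral homology, which is why one must first move from $X$ to a component of nonzero Euler characteristic and why the Minkowski step is used to make $G_1$ preserve that component. Everything else is a direct chaining of the three cited results.
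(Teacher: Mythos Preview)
Your proof is correct and follows exactly the route the paper indicates: Minkowski (Theorem~\ref{thm:Minkowski}) to trivialize the action on cohomology, Lefschetz to obtain the weak fixed point property, and then Theorem~\ref{thm:weak-fixed-point} to upgrade to a genuine fixed point. Your explicit reduction to a connected component with nonzero Euler characteristic is a sensible detail (since Theorem~\ref{thm:weak-fixed-point} requires connectedness) that the paper leaves implicit; note that you can simply fix one such component once and for all rather than taking a maximum over all components, since the decomposition of $X$ is independent of $G$.
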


The case $X=D^n$ is already nontrivial, and the examples of finite group actions
without fixed points when $n\geq 6$ show that the need to pass to a subgroup of
$G$ is in general unavoidable.

The existence of fixed points has strong implications regarding the algebraic
structure of the group that acts. This is most transparent in the smooth category,
in which we have the following result.

\begin{lemma}
\label{lemma:linearising-smooth}
Let $X$ be a connected smooth manifold, and suppose that a finite group $G$
acts smoothly and effectively on $X$. If $x\in X^G$ then the morphism $\delta:G\to\GL(T_xX)$,
defined by deriving the action of $G$ on $X$ at $x$, is injective.
\end{lemma}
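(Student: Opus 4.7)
The plan is to show that any $g \in \Ker\delta$ acts trivially on all of $X$, so that effectiveness of the action forces $g = 1$. The starting point is a Bochner-style linearisation at $x$: since $G$ is finite, averaging an arbitrary smooth Riemannian metric on $X$ over $G$ yields a smooth $G$-invariant Riemannian metric, for which every element of $G$ acts by isometries. Because $x$ is fixed, the exponential map $\exp_x \colon T_xX \to X$ of this metric is $G$-equivariant on a small neighbourhood $V$ of the origin, in the sense that $h \cdot \exp_x(v) = \exp_x(\delta(h) v)$ for every $h \in G$ and every $v \in V$. Consequently, if $g \in \Ker\delta$ then $g$ restricts to the identity on the open neighbourhood $\exp_x(V)$ of $x$.

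I would next propagate this triviality using connectedness. Let $U \subseteq X$ denote the set of points $y$ at which $g$ coincides with the identity on some open neighbourhood. By construction $U$ is open, and by the previous step it is nonempty. The key claim is that $U$ is also closed. If $y_n \in U$ converges to $y$, then $g \cdot y = y$, because the fixed-point set of $g$ is closed. Applying the same linearisation argument at $y$ (with respect to the same $G$-invariant metric) shows that $g$ is smoothly conjugate, on a neighbourhood of $y$, to the linear map $\delta_y(g) \in \GL(T_yX)$. On the other hand, the differential $D_{y_n} g$ equals $\Id$ for each $n$ because $g$ agrees with the identity on an open neighbourhood of $y_n$, and continuity of the differential of $g$ then forces $\delta_y(g) = \Id$. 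The linearisation therefore yields $y \in U$, so $U$ is closed. Since $X$ is connected and $U$ is open, closed and nonempty, $U = X$, hence $g$ acts trivially on $X$ and $g = 1$.

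The main obstacle is precisely the closedness of $U$. Smoothness alone does not permit propagating the identity by analytic continuation, so one has to combine two ingredients: a pointwise Bochner linearisation at every fixed point of $g$, which comes for free from the $G$-invariant metric, and a continuity-of-derivative argument to transfer the triviality of $D_{y_n} g$ to $D_y g$. Once this step is in place, everything else — the averaging of the metric, the equivariance of $\exp_x$, and the conclusion by connectedness — is routine.
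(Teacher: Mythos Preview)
Your proof is correct and follows essentially the same approach as the paper: average to a $G$-invariant metric, use the equivariance of $\exp_x$ to see that any $g\in\Ker\delta$ is the identity near $x$, and then run an open--closed argument on the connected manifold $X$. The only cosmetic difference is in the closedness step: you pass the identity $D_{y_n}g=\Id$ to the limit by continuity of the differential, whereas the paper observes that the linear model of $X^g$ near a fixed point $y$ is a subspace of $T_yX$, which cannot be proper if $y$ lies in the closure of the interior of $X^g$; both justifications are routine and equivalent.
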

\begin{proof}
Let $\eta_0$ be any Riemannian metric on $X$ and let $\eta=\sum_{g\in G}\rho(g)^*\eta_0$,
where $\rho(g)$ is the diffeomorphism $X\ni x\mapsto g\cdot x\in X$.
Then $\eta$ is a $G$-invariant Riemannian metric on $X$, so the exponential map
with respect to $\eta$ gives a $G$-equivariant diffeomorphism from a neighborhood
of $0$ in $T_xX$ to a neighborhood of $x$ in $X$. Therefore, if $g\in\Ker\delta$ then
$X^g=\{x\in X\mid g\cdot x\}$ has nonempty interior. The same argument applied to
the action of the subgroup $\la g\ra\leq G$ generated by $g$ implies that
the interior of $X^g=X^{\la g\ra}$ is closed. Since the interior $X^g$ is obviously open,
and since $X$ is connected, it follows that $X^g=\emptyset$, which implies that
$g=1$ because $G$ acts effectively on $X$.
\end{proof}

For example, if $X$ is a connected $n$-dimensional smooth manifold
and $G=(\ZZ/r)^m$ acts smoothly and effectively on $X$ with a fixed point
then $G$ is isomorphic to a subgroup of $\GL(n,\RR)$, which easily implies
that $m\leq [n/2]$ as soon as $r\geq 3$.

If instead of a smooth action we consider a continuous action of a finite group
$G$ on a connected $n$-dimensional manifold, then the existence of a fixed point does
not necessarily imply that $G$ is isomorphic to a subgroup of $\GL(n,\RR)$.
Indeed, Bruno Zimmermann has constructed in \cite{Zimmermann2017}, for each
$n\geq 5$, examples
of effective continuous actions on the $n$-sphere $S^n$ of groups which are not
isomorphic to any subgroup of $\GL(n+1,\RR)$. Taking the cone of $S^n$, which is
homeomorphic to $\RR^{n+1}$, we obtain an effective
action that fixes a point (namely, the vertex of the cone). However, if we consider
actions of finite $p$-groups then no such example exists:

\begin{lemma}
\label{lemma:linearising-continuous}
Let $X$ be a connected $n$-manifold, and suppose that a finite $p$-group $G$
acts continuously and effectively on $X$. If $X^G\neq\emptyset$ then $G$
is isomorphic to a subgroup of $\GL(n,\RR)$.
\end{lemma}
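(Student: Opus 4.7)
My plan is to induct on $|G|$, reducing via a central subgroup of order $p$ and Smith theory to a smaller group acting on the corresponding fixed set, and then assembling the pieces into a faithful $n$-dimensional real representation. First, by Newman's theorem for $p$-group actions on connected topological manifolds (no nontrivial subgroup can fix a nonempty open set), the action of $G$ remains effective on any $G$-invariant open neighborhood of $x$, so one may replace $X$ by an arbitrarily small such neighborhood.

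The base case $|G|=1$ is trivial. For the inductive step, pick $C\le Z(G)$ of order $p$ (which exists since any nontrivial $p$-group has nontrivial centre). Let $Y_0$ be the connected component through $x$ of $X^C$, which by Smith theory is an $\FF_p$-cohomology manifold of some dimension $m<n$ (the strict inequality comes from applying Newman's theorem to the effective action of $C$). The quotient $G/C$ acts effectively on $Y_0$ with $x$ as a fixed point, so by the induction hypothesis (suitably formulated) one would obtain a faithful representation $\rho_1:G/C\hookrightarrow\GL(m,\RR)$. Separately, I would construct a representation $\rho_2:G\to\GL(n-m,\RR)$ on which $C$ acts faithfully, by analysing the action of $G$ on the normal local cohomology $H^{n-m}(U,U\setminus Y_0;\FF_p)$ for a small $G$-invariant neighborhood $U$ of $x$, and then lifting the resulting $\FF_p$-representation to characteristic zero. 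The direct sum $\rho_1\oplus\rho_2:G\to\GL(n,\RR)$ is then the desired faithful representation: it lands in $\GL(n,\RR)$ because $m+(n-m)=n$, and it is injective because any element in $\Ker(\rho_1\oplus\rho_2)$ lies in $C$ (since $\rho_1$ sees $G/C$ faithfully) and acts trivially on the normal direction (so is trivial by faithfulness of $\rho_2|_C$).

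The hard part will be justifying the inductive step: $Y_0$ is only guaranteed to be an $\FF_p$-cohomology manifold, not an honest topological manifold, so the induction hypothesis does not apply directly. I expect one must either upgrade the statement to cohomology manifolds (which requires reformulating Newman's theorem and the notion of \emph{effective} action in that setting), or extract $\rho_1$ and $\rho_2$ simultaneously from the equivariant cohomology spectral sequence for the pair $(X,X\setminus\{x\})$ near $x$, bypassing any need to treat $Y_0$ as a manifold on its own. A secondary subtlety is lifting $\rho_2$ from $\FF_p$ to $\RR$ while preserving the faithfulness of $C$; this can likely be handled via integral Smith sequences, or by exhibiting $C\hookrightarrow \O(n-m)$ through its action on the spherical cohomological link of $Y_0$ at $x$ inside $X$ and then extending this embedding $G$-equivariantly to all of $G$.
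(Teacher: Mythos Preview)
The paper does not give a self-contained argument here: it simply invokes the theorem of Dotzel and Hamrick \cite{DH} (which, for any finite $p$-group acting on a mod-$p$ homology sphere, produces a real representation whose fixed-set dimensions agree with those of the action), applied to the cohomological link of a point of $X^G$, together with standard local Smith theory. Your inductive scheme via a central $C\cong\ZZ/p$ is precisely the skeleton of the Dotzel--Hamrick proof, so you are not proposing a genuinely different route so much as attempting to reprove their theorem from scratch.

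There are, however, two concrete gaps in your execution. First, $G/C$ need not act effectively on $Y_0$: for $G=\ZZ/p^2$ rotating $\RR^2$ and $C=pG$ one has $Y_0=\{0\}$, on which $G/C$ acts trivially. You must instead work with the full kernel $K$ of $G\to\Homeo(Y_0)$ and then require $\rho_2$ to be faithful on all of $K$, not just on $C$. Second, and more seriously, your construction of $\rho_2$ from $H^{n-m}(U,U\setminus Y_0;\FF_p)$ cannot succeed: this group is one-dimensional over $\FF_p$ (it is generated by the Thom class), and every homomorphism from a $p$-group into $\GL(1,\FF_p)=\FF_p^\times\cong\ZZ/(p-1)$ is trivial, so there is nothing nontrivial to lift to characteristic zero. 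Your fallback --- embed $C\hookrightarrow\O(n-m)$ via the normal link and then ``extend $G$-equivariantly to all of $G$'' --- is not a well-defined operation, since representations of a subgroup do not canonically extend to the ambient group. The actual argument in \cite{DH} avoids splitting into tangential and normal pieces: it works entirely with actions on mod-$p$ homology spheres (so the category is closed under passing to fixed sets, resolving the difficulty you flag), carries the full list of dimensions $\{\dim X^H:H\leq G\}$ through the induction, and assembles the desired representation by a dimension count rather than by reading it off any single cohomology group.
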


The previous lemma follows from combining a result of
R.M. Dotzel and G.C. Hamrick \cite{DH} with basic results on the geometry
of continuous actions of $p$-groups near a fixed point
(see \cite[Corollary 3.3]{Mundet2022} for details).

Lemma \ref{lemma:linearising-continuous} is one of the ingredients in the proof
that if $X$ is a connected manifold with $H_*(X;\ZZ)$ is finitely generated
and $\chi(X)\neq 0$ then $\Homeo(X)$ is Jordan. Let us briefly sketch the argument of the proof.
If $H_*(X;\ZZ)$ is finitely generated
and $\chi(X)\neq 0$ then a formula of Ye \cite[Theorem 2.5]{Ye2} implies
the existence of a number $C$ such that any finite $p$-group $G$ acting effectively on $X$
has a subgroup $G'\leq G$ satisfying $[G:G']\leq C$ and $X^{G'}\neq\emptyset$
(see \cite[Lemma 2.1]{Mundet2022} for details).
Lemma \ref{lemma:linearising-continuous} implies that $G'$ is isomorphic
to a subgroup of $\GL(n,\RR)$, where $n=\dim X$, and Jordan's theorem then implies
that $G'$ has an abelian subgroup $A\leq G'$ satisfying $[G':A]\leq C'$
for some $C'$ depending only on $n$ (and hence on $X$, but not on $G$).
This fact on $p$-groups can be combined with the result
by Csik\'os, Pyber and  Szab\'o mentioned after the statement of Theorem \ref{thm:CPS2}
above to conclude that $\Homeo(X)$ is Jordan.
Note that we are not assuming that $X$ is compact
as in Theorem \ref{thm:almost-fixed-point-nonzero-Euler}; but compactness is crucial in
Theorem \ref{thm:almost-fixed-point-nonzero-Euler}, as long as we care about arbitrary
finite groups and not only on $p$-groups, as shown for example by
the main result in \cite{HKMS} (see the end of \cite[\S 1.1]{Mundet2017} for an explanation).

\section{Stabilizers}
\label{s:stabilizers}

Studying whether a given action has fixed points is a particular case of the
problem of understanding the collection of stabilizers of the action.
For any effective action of a group $G$ on a space $X$ we denote by
$$\Stab(G,X)=\{G_x\mid x\in X\}$$
the collection of subgroups of $G$ that arise as stabilizers of points in $X$. The previous
section was concerned with the question of whether $G\in \Stab(G,X)$. Here we
consider the question of bounding the cardinal of $\Stab(G,X)$.

In general, $|\Stab(G,X)|$ cannot be bounded by a constant depending only $X$.
We prove this with an example. If $G_n$ denotes the isometry group of a
regular $n$-gon $P_n\subset\RR^2$, then two vertices of $P_n$ have the same stabilizer in $G_n$
if and only if they are aligned with the center of $P_n$, so $|\Stab(G_n,P_n)|\geq n/2$. Since
$P_n\cong S^1$ for every $n$, we obtain actions of finite groups $G$ on $S^1$ with arbitrarily
large $|\Stab(G,S^1)|$. Taking $n=2^k$ the group $G_n$ is a $2$-group, so this phenomenon also
holds true for $p$-groups (examples of $p$-group actions for an arbitrary prime $p$
with the same behaviour can be obtained
taking actions on the $p-1$-dimensional torus, see \cite[Remark 1.4]{CMPS}).

In contrast, \cite[Theorem 1.3]{CMPS} states the following.

\begin{theorem}
\label{thm:CMPS}
Let $X$ be a manifold with finitely generated $H_*(X;\ZZ)$. There exists a constant $C$
such that for every action of a finite $p$-group $G$ on $X$ there is a subgroup $H\leq G$
containing the center of $G$ and satisfying $[G:H]\leq C$ and $|\Stab(H,X)|\leq C$.
\end{theorem}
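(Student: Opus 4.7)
The plan is to combine Mann--Su's rank bound (Theorem \ref{thm:MS}), Smith theory for finite $p$-group actions, and the linearization lemma (Lemma \ref{lemma:linearising-continuous}), and then proceed by induction on $|G|$.

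First I would reduce to the case where the action of $G$ on $X$ is effective. If $K\leq G$ is the kernel, then $K$ is contained in every stabilizer, so there is a canonical bijection $\Stab(G,X)\leftrightarrow\Stab(G/K,X)$; moreover the preimage under $G\to G/K$ of any subgroup containing $Z(G/K)$ automatically contains $Z(G)$. Thus it suffices to treat the effective case. Under this assumption, Mann--Su (in the finitely generated $H_*(X;\ZZ)$ version quoted after Theorem \ref{thm:MS}) yields a constant $r_0=r_0(X)$, independent of $p$ and of $G$, that bounds the rank of every elementary abelian $p$-subgroup of $G$; in particular $\Omega_1(Z(G))$ has rank at most $r_0$. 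Also, by Lemma \ref{lemma:linearising-continuous}, every stabilizer $G_x$ embeds into $\GL(n,\RR)$ with $n=\dim X$, so the internal structure (order, number of generators) of each individual stabilizer is controlled by $X$ alone.

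The core of the argument is induction on $|G|$, with the constant $C=C(X)$ uniform in $|G|$ and in $p$. Choose $z\in Z(G)$ of order $p$ and set $Z_0=\la z\ra$; by effectiveness, $X^{Z_0}\subsetneq X$. The stabilizers split according to whether they contain $Z_0$. Those that do correspond to points of $X^{Z_0}$, a closed $G$-invariant subset on which the action of $G$ factors through $G/Z_0$; by Smith theory the mod-$p$ cohomology of $X^{Z_0}$ is bounded by that of $X$, and the inductive hypothesis for the smaller $p$-group $G/Z_0$ acting on $X^{Z_0}$ controls this class. Stabilizers not containing $Z_0$ correspond to points of the open subset $X\setminus X^{Z_0}$ on which $Z_0$ acts freely; these can be analysed by passing to the quotient $(X\setminus X^{Z_0})/Z_0$, whose mod-$p$ cohomology is again controlled, and applying the induction to the residual $G/Z_0$-action. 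The two resulting subgroups of $G/Z_0$ are intersected and pulled back to $G$; since the preimage of any subgroup containing $Z(G/Z_0)$ contains $Z_0\cdot Z(G)\supseteq Z(G)$, the combined subgroup $H$ meets the centrality requirement.

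The main obstacle is twofold. First, extracting a bound uniform in the prime $p$: the number of subgroups of a rank-$r_0$ abelian $p$-group grows with $p$, so uniformity cannot come from brute enumeration of subgroup lattices; it must be forced by the topology of the action, combining Smith estimates on the $X^K$ with the linearization of stabilizers to rule out too many distinct fixed-point strata. Second, one must merge the control over the two classes of stabilizers (inside and outside $X^{Z_0}$) into a single bounded-index subgroup of $G$ containing $Z(G)$, and propagate the constants through the induction without letting them depend on $|G|$. A further delicate point is that $(X\setminus X^{Z_0})/Z_0$ is typically noncompact, so one needs to verify that finite generation of $H_*(\,\cdot\,;\ZZ)$, or at least the hypothesis in the non-compact version of Mann--Su quoted after Theorem \ref{thm:MS}, is preserved under this quotient; I expect this is where most of the technical work lies.
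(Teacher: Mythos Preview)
This survey does not prove Theorem~\ref{thm:CMPS}; it is quoted from \cite{CMPS} without argument, so there is no proof in the present paper to compare against. What follows is therefore an assessment of your sketch on its own terms.

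Your outline assembles the right ingredients, but the induction on $|G|$ as you describe it does not close, and you essentially say so yourself in the final paragraph. Two concrete failures: (i) For continuous actions, $X^{Z_0}$ is only a $\ZZ/p$-cohomology manifold, not a topological manifold, so the inductive hypothesis (and Lemma~\ref{lemma:linearising-continuous}) cannot be applied to it as stated; one has to reformulate the whole statement for a class of spaces closed under taking $p$-group fixed sets and free $\ZZ/p$-quotients, with the constant depending only on invariants (dimension, total mod-$p$ Betti numbers) that Smith theory actually controls. (ii) Even granting (i), stripping off one central $\ZZ/p$ at a time gives an induction of depth $\log_p|G|$, which is unbounded; at each step you intersect two bounded-index subgroups of $G/Z_0$, so the index bound squares, and the stabilizer count at least doubles. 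Nothing in your sketch prevents this blow-up. The Mann--Su rank bound $r_0$ limits the rank of elementary abelian sections, not the length of a central series, so it does not by itself cap the number of steps. Turning this outline into a proof requires a genuinely new idea to make the recursion terminate in a number of steps depending only on $X$ (or to replace the recursion by a direct stratification argument with boundedly many strata); identifying that idea is the actual content of \cite{CMPS}, and your proposal does not yet contain it.
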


Theorem \ref{thm:CMPS} plays a key role in the proof of many of the results stated in this survey, as will become clear in the following three sections.

Is Theorem \ref{thm:CMPS} true if instead of $p$-groups we consider arbitrary finite groups? This
seems to be unknown at present, with the exception of $D^n$, $S^{2n}$ and, for trivial
reasons, almost asymmetric manifolds. The following is \cite[Question 1.9]{CMPS}:

\begin{question}
Does there exist, for every compact manifold $X$, a constant $C$ such that for any action
of a finite group $G$ on $X$ there is a subgroup $H\leq G$ satisfying $[G:H]\leq C$
and $|\Stab(H,X)|\leq C$?
\end{question}

\section{Proofs of some of the theorems}
\label{s:proofs}

\subsection{Proof of Theorem \ref{thm:smooth-disc-sym-low-dimension}}
\label{ss:thm:smooth-disc-sym-low-dimension}
The case $n=1$ is elementary. For the case $n=2$ see the comments in item (2) of
Example \ref{ex:computation-discsym}.
Let us now prove the case $n=4$, and afterwards we will address the case $n=3$.

We first prove some elementary facts on finite abelian groups to be used later.
Recall that for any finite group $G$ we denote by $d(G)$
the minimal size of a generating set of $G$.
If $p$ is a prime and $G$ is a finite abelian $p$-group of
exponent $p$, then we can look at $G$ as a vector space over $\ZZ/p$,
and a subset $g_1,\dots,g_k\in G$ is a minimal generating subset if
and only if it is a basis, so $d(G)=\dim G$. Therefore, $d((\ZZ/p)^m)=m$.

As elsewhere in this paper, we use additive notation for abelian groups.

\begin{lemma}
\label{lemma:d-non-decreasing}
If $A\leq B$ are finite abelian groups, then $d(A)\leq d(B)$.
\end{lemma}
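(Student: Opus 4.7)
The plan is to reduce to the case of $p$-groups via the primary decomposition and then compute $d$ in terms of the $p$-torsion subgroup, where the inclusion $A\leq B$ directly yields the desired inequality.

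First I would handle the reduction step. By the primary decomposition, write $B=\bigoplus_p B_p$, where $B_p$ is the Sylow $p$-subgroup of $B$, and similarly $A=\bigoplus_p A_p$ with $A_p=A\cap B_p\leq B_p$. I claim that for any finite abelian group $G$ with primary decomposition $G=\bigoplus_p G_p$ one has $d(G)=\max_p d(G_p)$. The inequality $d(G_p)\leq d(G)$ for each $p$ holds because $G_p$ is a quotient of $G$ (project onto the $p$-primary part), and $d$ is non-increasing under quotients (the images of a generating set generate). Conversely, if $m=\max_p d(G_p)$ and we pick, for each $p$ dividing $|G|$, generators $g_{p,1},\dots,g_{p,m}\in G_p$ (padding with zeros if necessary), then the elements $x_i=\sum_p g_{p,i}\in G$ for $i=1,\dots,m$ generate $G$ by the Chinese Remainder Theorem applied to the orders of the Sylow components. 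Therefore it suffices to prove the lemma when $A$ and $B$ are finite abelian $p$-groups.

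Next, for a finite abelian $p$-group $G$, I would identify $d(G)$ with the $\mathbb{F}_p$-dimension of the $p$-torsion subgroup $G[p]=\{g\in G\mid pg=0\}$. By the structure theorem $G\cong\bigoplus_{i=1}^k\mathbb{Z}/p^{a_i}$ with $k=d(G)$, and each cyclic summand contributes exactly one copy of $\mathbb{Z}/p$ to $G[p]$, so $G[p]\cong(\mathbb{Z}/p)^k$ and hence $\dim_{\mathbb{F}_p}G[p]=d(G)$.

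Finally, for $A\leq B$ finite abelian $p$-groups, note that $A[p]=A\cap B[p]$ is an $\mathbb{F}_p$-subspace of $B[p]$, so
\[
d(A)=\dim_{\mathbb{F}_p}A[p]\leq\dim_{\mathbb{F}_p}B[p]=d(B),
\]
completing the proof. There is no serious obstacle here; the only point requiring a moment of care is the formula $d(G)=\max_p d(G_p)$, which is purely a consequence of the Chinese Remainder Theorem applied to cyclic summands.
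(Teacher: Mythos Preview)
Your proof is correct but follows a different route from the paper. The paper argues in two lines: first, $d$ is non-increasing under quotients (images of generators generate); second, Pontryagin duality for finite abelian groups says that $A$ embeds in $B$ if and only if $A$ is a quotient of $B$, so $d(A)\leq d(B)$ follows immediately. Your approach instead reduces to the $p$-primary case via $d(G)=\max_p d(G_p)$ and then identifies $d(G)=\dim_{\FF_p}G[p]$, so the inequality becomes the trivial inclusion $A[p]\subseteq B[p]$ of $\FF_p$-vector spaces. The paper's argument is shorter and more conceptual but treats duality as a black box; yours is more hands-on and entirely self-contained from the structure theorem. Incidentally, the paper does essentially your $p$-group computation in its next lemma (where it shows $d(A)=d(A/pA)$, the dual statement to your $d(A)=\dim_{\FF_p}A[p]$), so the two approaches are close cousins.
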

\begin{proof}
Let $B$ be a finite abelian group.
If $\pi:B\to C$ is a surjection then $\pi$ maps any generating
set of $B$ to a generating set of $C$, so $d(C)\leq d(B)$.
By Pontryagin duality (see e.g. \cite[(A3),(A11)]{FrTa}),
given a finite abelian group $A$ there is a subgroup of $B$ isomorphic to $A$
if and only if there is a quotient of $B$ isomorphic to $A$.
The lemma follows.
\end{proof}

The previous result is false for arbitrary finite groups. For example, for any
natural number $n\geq 3$ the symmetric group $S_{2n}$ is generated by $(1\,2)$ and
$(1\,2\,\dots\, 2n)$,
which implies that $d(S_{2n})=2$, because $S_{2n}$ is not cyclic. But
the subgroup $G\leq S_{2n}$ generated by the transpositions
$(1\,2),(3\,4),\dots,(2n-1\,\,\,2n)$ is isomorphic to $(\ZZ/2)^n$, so $d(G)=n$.

\begin{lemma}
\label{lemma:d-abelian-quotient}
Let $A$ be a finite abelian $p$-group.
We have $d(A)=d(A/pA)$.
\end{lemma}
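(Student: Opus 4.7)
The plan is to prove two inequalities. The first, $d(A/pA) \leq d(A)$, is immediate: the quotient map $A \to A/pA$ sends any generating set of $A$ to a generating set of $A/pA$, an observation already used in the proof of Lemma \ref{lemma:d-non-decreasing}.

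For the reverse inequality $d(A) \leq d(A/pA)$, I would run a Nakayama-style argument adapted to finite abelian $p$-groups. Write $k = d(A/pA)$ and pick $\bar a_1, \dots, \bar a_k \in A/pA$ that generate $A/pA$. Lift each $\bar a_i$ to $a_i \in A$ and let $B = \langle a_1,\dots,a_k\rangle \leq A$. By construction $B$ surjects onto $A/pA$, so $A = B + pA$. Passing to the quotient, the finite abelian $p$-group $Q := A/B$ satisfies $Q = pQ$, i.e., multiplication by $p$ is surjective on $Q$.

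The key observation, and the only nontrivial step, is that on a finite abelian $p$-group multiplication by $p$ is nilpotent: for each $x \in Q$ there exists $n$ with $p^n x = 0$, and taking $n$ to be an exponent annihilating the finite group $Q$ gives $p^n Q = 0$. Combined with $Q = pQ = p^2 Q = \cdots = p^n Q$, this forces $Q = 0$, so $A = B = \langle a_1, \dots, a_k\rangle$. Hence $d(A) \leq k = d(A/pA)$, finishing the proof.

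I do not expect a serious obstacle: the argument is essentially Nakayama's lemma for the ring $\ZZ_{(p)}$ applied to the finitely generated module $A$, specialized to the abelian $p$-group setting where nilpotence of $p$ replaces the usual Jacobson radical input. An alternative would be to invoke the structure theorem for finite abelian $p$-groups to write $A \cong \bigoplus_{i=1}^{k} \ZZ/p^{a_i}$ and directly compute $A/pA \cong (\ZZ/p)^k$, but the Nakayama-style route is shorter and does not require classification.
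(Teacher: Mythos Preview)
Your proof is correct. The Nakayama-style argument is sound: from $A=B+pA$ you correctly deduce $Q=pQ$ for $Q=A/B$, and nilpotence of multiplication by $p$ on a finite $p$-group forces $Q=0$.

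The paper takes the alternative route you mention at the end: it invokes the classification of finite abelian groups to write $A\simeq\ZZ/p^{e_1}\times\dots\times\ZZ/p^{e_k}$, observes directly that $A/pA\simeq(\ZZ/p)^k$, and concludes $d(A)\leq k=d(A/pA)$ since $A$ is visibly generated by $k$ elements. Your approach avoids classification and is essentially Nakayama's lemma in disguise, which makes it more portable (it generalises immediately to finitely generated modules over any local ring, or to arbitrary finite $p$-groups via the Burnside basis theorem). The paper's approach is shorter given that the structure theorem is standard, and it has the minor advantage of making the value $d(A)=k$ explicit in terms of the invariant factors. Both are entirely adequate here.
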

\begin{proof}
By the classification of finite abelian groups we have
$A\simeq\ZZ/p^{e_1}\times\dots\times\ZZ/p^{e_k}$ for some
naturals $k,e_1,\dots,e_k$. Then $A/pA\simeq(\ZZ/p)^k$.
Since $A$ can be generated by $k$ elements, we have $d(A)\leq d(A/pA)$.
The converse inequality $d(A)\geq d(A/pA)$ follows from the existence of a surjection $A\to A/pA$.
\end{proof}

The previous result is a particular case of the following general fact: if $G$ is a finite group and $F(G)$ denotes
its Frattini subgroup (i.e., the intersection of all maximal proper subgroups of $G$) then
$d(G)=d(G/F(G))$ (see e.g. \cite[Lemma X.5]{Isaacs}).

\begin{lemma}
\label{lemma:small-d-quotient-big-K}
Let $p$ be a prime and let $e,m$ be natural numbers.
Let $G=(\ZZ/p^e)^m$ and let $K\leq G$ be a subgroup. Let $n=d(G/K)$. We have
$n\leq m$ and if $n<m$ then $K$ contains a subgroup isomorphic to $(\ZZ/p^e)^{m-n}$.
\end{lemma}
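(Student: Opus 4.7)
The plan is to reduce modulo $p$ and convert the problem into linear algebra over $\FF_p:=\ZZ/p$. First, the inequality $n\leq m$ is immediate: the quotient map $G\to G/K$ sends any generating set to a generating set, so $d(G/K)\leq d(G)=m$.

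For the second assertion, I would identify $m-n$ with the $\FF_p$-dimension of the image of $K$ in $\ov{G}:=G/pG\cong (\FF_p)^m$. Writing $\ov{K}$ for this image, there is a canonical isomorphism $\ov{G}/\ov{K}\cong (G/K)/p(G/K)$, since the kernel of the natural surjection $\ov{G}\to(G/K)/p(G/K)$ is exactly $(K+pG)/pG=\ov{K}$. Applying Lemma \ref{lemma:d-abelian-quotient} to the finite abelian $p$-group $G/K$ then gives
$n=d(G/K)=\dim_{\FF_p}(G/K)/p(G/K)=m-\dim_{\FF_p}\ov{K}$,
and hence $\dim_{\FF_p}\ov{K}=m-n$.

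Suppose now $n<m$. I would pick $k_1,\dots,k_{m-n}\in K$ whose reductions form an $\FF_p$-basis of $\ov{K}$, and extend this family to $k_1,\dots,k_{m-n},\ell_{m-n+1},\dots,\ell_m\in G$ whose reductions form a basis of $\ov{G}$. A Nakayama-style argument then shows that these $m$ elements generate $G$: letting $H$ denote the subgroup they generate, one has $H+pG=G$, so the finite abelian $p$-group $G/H$ satisfies $p(G/H)=G/H$, forcing $G/H=0$. Because each chosen element has order dividing $p^e$, the homomorphism $(\ZZ/p^e)^m\to G$ sending the standard basis to this family is a surjection between groups of the same cardinality $p^{em}$, hence an isomorphism. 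This realizes $G$ as an internal direct sum of $m$ cyclic groups of order exactly $p^e$, one for each chosen generator; in particular $\la k_1,\dots,k_{m-n}\ra\cong(\ZZ/p^e)^{m-n}$, and this subgroup lies inside $K$.

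I do not foresee any serious obstacle; the subtlest step is in the last paragraph, where freeness of $(\ZZ/p^e)^m$ over $\ZZ/p^e$ combined with a cardinality count is what upgrades the chosen $k_i$ from being merely linearly independent modulo $pG$ to generating a direct summand of $G$ isomorphic to $(\ZZ/p^e)^{m-n}$.
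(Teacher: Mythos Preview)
Your proof is correct and follows essentially the same route as the paper: both reduce modulo $p$, identify $\dim_{\FF_p}\ov{K}=m-n$ via Lemma~\ref{lemma:d-abelian-quotient}, and then lift an $\FF_p$-basis of $\ov{K}$ to elements $k_1,\dots,k_{m-n}\in K$. The only difference is in the final step, where one must show that $\la k_1,\dots,k_{m-n}\ra\cong(\ZZ/p^e)^{m-n}$. The paper argues directly that the map $\ZZ^{m-n}\to G$ determined by the $k_i$ has kernel exactly $p^e\ZZ^{m-n}$, via a short $p$-adic valuation argument. You instead extend the $k_i$ to a full lift of a basis of $\ov{G}$, invoke Nakayama to get a surjection $(\ZZ/p^e)^m\to G$, and conclude by cardinality that it is an isomorphism; this realizes the $k_i$ as part of a free $\ZZ/p^e$-basis of $G$. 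Your version is arguably cleaner, since it avoids the explicit kernel computation and yields the stronger statement that $\la k_1,\dots,k_{m-n}\ra$ is in fact a direct summand of $G$; the paper's version has the minor advantage of never leaving $K$.
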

\begin{proof}
The inequality $n\leq m$ follows from the fact that $G/K$ is a quotient of $G$.
By Lemma \ref{lemma:d-abelian-quotient} we have
$d(G/K)=d((G/K)/p(G/K))=d(G/(K+pG))$. There is a natural exact sequence
$$0\to (K+pG)/pG \to G/pG\to G/(K+pG)\to 0,$$
where each of the groups have exponent $p$. Hence
$$\dim((K+pG)/pG)=\dim (G/pG)-\dim(G/(K+pG))=m-n.$$
Let $\varepsilon_1,\dots,\varepsilon_{m-n}$
be a basis of $(K+pG)/pG$ as a $\ZZ/p$-vector space.
Choose for each $i$ a lift $e_i\in K+pG$ of $\varepsilon_i$.
Since $(K+pG)/pG\simeq K/(K\cap pG)$, we can in fact take $e_i$ inside $K$.
The morphism $E:\ZZ^{m-n}\to K$, $(l_1,\dots,l_{m-n})\mapsto
\sum l_ie_i$ has kernel $p^e\ZZ^{m-n}$. This follows from the general fact that if $f_1,\dots,f_r\in G$ are lifts of a set of $r$ linearly independent elements of $G/pG$ then the kernel of the morphism
$F:\ZZ^r\to G$, $(\lambda_1,\dots,\lambda_r)\mapsto\sum_i\lambda_if_i$, is equal to $p^e\ZZ^r$. Certainly $p^e\ZZ^r\leq\Ker F$. To prove the reverse inclusion $\Ker F\leq p^e\ZZ^r$, assume that $\sum_i p^{c_i}b_if_i=0$ where $c_i\geq 0$, $b_i$
are integers and $b_i$ is not divisible by $p$. Let $c=\min\{c_i\}$, and assume that $c<e$.
Then we have
$\sum_{i\mid c_i=c} p^cb_if_i\in p^{c+1}G$, which implies that
$\sum_{i\mid c_i=c} b_if_i\in pG$ because $c<e$.
This contradicts the fact that the projections of $f_1,\dots,f_r$
in $G/pG$ are linearly independent, and thus the proof that $p^e\ZZ^r=\Ker F$ is now complete.
It follows that $E(\ZZ^{m-n})\leq K$ is isomorphic to $\ZZ^{m-n}/p^e\ZZ^{m-n}\simeq (\ZZ/p^e)^{m-n}$.
\end{proof}

We are going to use the following result.

\begin{lemma}
\label{lemma:restriction-to-surface}
Let $\Sigma$ be a closed and connected surface of genus $g$. There is a constant integer
$C$, depending only on $g$, with the following properties.
\begin{enumerate}
\item If $p>C$ is a prime and $m\geq 2$ is an integer, then for any morphism $\phi:(\ZZ/p)^m\to\Diff(\Sigma)$
the kernel of $\phi$ contains a subgroup isomorphic to $(\ZZ/p)^{m-2}$.
\item If $p\leq C$ is a prime and $e\geq C$ is an integer, then
for any morphism $\phi:(\ZZ/p^e)^m\to\Diff(\Sigma)$
the kernel of $\phi$ contains a subgroup isomorphic to $(\ZZ/p^{e-C})^{m-2}$.
\end{enumerate}
\end{lemma}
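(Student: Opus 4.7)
The plan is to reduce $\phi$ to an isometric action for a constant-curvature Riemannian metric on $\Sigma$, classify the relevant abelian $p$-subgroups of the resulting isometry group, and then apply Lemma~\ref{lemma:small-d-quotient-big-K}. For the reduction, I would invoke the classical fact that any smooth action of a finite group on a closed surface is smoothly conjugate to an action by isometries of some Riemannian metric of constant curvature; this follows by averaging an arbitrary metric to obtain a $G$-invariant one, combined with uniformization in genera $0$ and $1$ and Kerckhoff's Nielsen realization theorem in genus $\geq 2$. Consequently $H=\phi(G)$ may be assumed to lie in $\Isom(\Sigma,g_0)$, which is a subgroup of $\O(3)=\SO(3)\times\{\pm I\}$ when $g=0$; of the affine group $T^2\rtimes F$ with $F\leq\GL(2,\ZZ)$ finite and $|F|\leq 12$ when $g=1$; and a finite group of order $\leq 84(g-1)$ (Hurwitz's bound) when $g\geq 2$.

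For part~(1), I would choose $C>\max(84(g-1),12,2)$. For $p>C$ the image $H$ satisfies: $H=\{1\}$ when $g\geq 2$; $H\leq T^2[p]\cong(\ZZ/p)^2$ when $g=1$, because $p\nmid|F|$ forces the projection $\phi(G)\to F$ to be trivial; and $H$ is cyclic when $g=0$, because $p$ odd forces $H\leq\SO(3)$, and any odd-order abelian subgroup of $\SO(3)$ lies in a maximal torus $\SO(2)$. In every case $d(H)\leq 2$, so Lemma~\ref{lemma:small-d-quotient-big-K} applied to $\ker\phi\leq(\ZZ/p)^m$ yields $\ker\phi\supseteq(\ZZ/p)^{m-2}$ (and the statement is trivial when $m=2$).

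For part~(2), with $p\leq C$ and $e\geq C$, the strategy is to find a subgroup $G_1\cong(\ZZ/p^{e-C_1})^m$ of $G$, for a constant $C_1$ depending only on $g$, such that $d(\phi(G_1))\leq 2$; Lemma~\ref{lemma:small-d-quotient-big-K} applied to $\phi|_{G_1}$ then gives $\ker\phi\supseteq(\ZZ/p^{e-C})^{m-2}$ after enlarging $C$ if needed. For $g\geq 2$, taking $C_1$ with $p^{C_1}\geq 84(g-1)$ makes $\phi(p^{C_1}G)$ trivial. For $g=1$, the projection $H\to F$ has image of order $\leq|F|\leq 12$, so choosing $C_1$ with $p^{C_1}\geq 12$ forces $\phi(p^{C_1}G)\leq H\cap T^2\leq T^2[p^e]\cong(\ZZ/p^e)^2$, which has $d\leq 2$. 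For $g=0$, an abelian $p$-subgroup of $\O(3)$ of exponent $>2$ must lie in $\SO(2)\times\{\pm I\}$ (the only other maximal abelian subgroup of $\O(3)$ is the Klein-four type $V_4\times\{\pm I\}\cong(\ZZ/2)^3$ of exponent $2$), hence has $d\leq 2$ and the lemma applies directly; the only remaining case is $p=2$ with $H\subseteq(\ZZ/2)^3$, where $2G\subseteq\ker\phi$ and $2G\cong(\ZZ/2^{e-1})^m$ already contains $(\ZZ/2^{e-C})^{m-2}$ for any $C\geq 1$.

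The main obstacle will be the delicate case analysis for $g=0$, $p=2$, where one must separately handle the toral abelian $2$-subgroups of $\O(3)$ (contained in $\SO(2)\times\{\pm I\}$, rank $\leq 2$) and the exponent-$2$ subgroup $V_4\times\{\pm I\}\cong(\ZZ/2)^3$; together with the initial reduction to isometric actions, which is classical but nontrivial.
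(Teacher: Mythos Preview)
Your argument is correct and follows the same overall strategy as the paper's: bound $d$ of the image of $\phi$ (or of its restriction to $p^{C_1}G$) by $2$, then invoke Lemma~\ref{lemma:small-d-quotient-big-K}. The difference lies only in how that bound is obtained. The paper quotes the computation $\discsym(\Sigma)\leq 2$ from Example~\ref{ex:computation-discsym}(2), together with \cite[Lemma~2.7]{Mundet2021}, to get a single constant $C$ such that every finite abelian $A\leq\Diff(\Sigma)$ contains $A'$ with $[A:A']\leq C$ and $d(A')\leq 2$; for part~(2) it then pulls $A'$ back to $G'\leq G$ and passes to $G''=p^sG\leq G'$. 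You instead unpack this by an explicit case analysis of the isometry groups, genus by genus. Your route is more self-contained and yields explicit constants; the paper's is shorter and treats all surfaces uniformly without splitting into cases.

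Two minor remarks. First, Kerckhoff's Nielsen realization theorem is not needed: once averaging produces a $G$-invariant metric, the unique constant-curvature metric in its conformal class (given by uniformization) is automatically $G$-invariant in every genus, since $G$ preserves the conformal class. Second, your case analysis treats only orientable $\Sigma$, whereas the lemma as stated (and its use in the proof of Theorem~\ref{thm:smooth-disc-sym-low-dimension}, where $Y_i$ is a fixed-point component) allows non-orientable surfaces. The extension is routine---$\RP^2$ has isometry group $\SO(3)$, the Klein bottle has isometry group an extension of a finite group by $T^2$, and the remaining non-orientable surfaces have finite diffeomorphism groups bounded in terms of the Euler characteristic---but it should be mentioned.
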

\begin{proof}
Combining the comments in item (2) of
Example \ref{ex:computation-discsym} with the classification of closed connected surfaces
we deduce the existence of a constant $C\geq 1$ such that any finite abelian subgroup $A<\Diff(\Sigma)$
has a subgroup $A'\leq A$ satisfying $[A:A']\leq C$
and $d(A')\leq 2$. We claim that this constant $C$ satisfies the desired properties.

We first prove (1). Let $p>C$ be a prime, let $m\geq 2$ be an integer, and suppose that $\phi:G\to\Diff(\Sigma)$ is a group morphism, where $G:=(\ZZ/p)^m$. Any subgroup or quotient of $G$ can naturally be seen as a vector space
over $\ZZ/p$. There is a subgroup $A'\leq\phi(G)$ which satisfies $d(A')\leq 2$ and
$[\phi(G):A']\leq C$. Since $\phi(G)$ is a $p$-group and $p>C$, we have $A'=\phi(G)$. We have
$\dim\phi(G)=d(G)\leq 2$. It follows that $\dim\Ker\phi\geq\dim G-2=m-2$. Hence $\Ker\phi$ contains a vector subspace of dimension $m-2$,
which is hence isomorphic to $(\ZZ/p)^{m-2}$.

We now prove (2).
Let $p\leq C$ be a prime and let $e\geq 1$ and $m\geq 2$ be integers.
Let $G=(\ZZ/p^e)^m$ and let $\psi:G\to\Diff(\Sigma)$ be a morphism of groups.
There is a subgroup $A'\leq\psi(G)$ satisfying $d(A')\leq 2$ and
$[\psi(G):A']\leq C$. Let $G'=\psi^{-1}(G)$. Then $|G/G'|=p^s$ for some integer
$s$, and we have $p^s\leq C$, which implies that $s\leq\log_pC\leq C$.
Let $G''=p^sG$.
Then $G''\leq (\ZZ/p^{e-s})^m$ and $G''\leq G'$. The latter implies that $\psi(G'')\leq A'$, so $d(\psi(G''))\leq 2$ by Lemma \ref{lemma:d-non-decreasing}.
By Lemma \ref{lemma:small-d-quotient-big-K}, $\Ker\psi$ contains a subgroup
isomorphic to $(\ZZ/p^{e-s})^{m-2}$.
\end{proof}

We are now ready to prove Theorem \ref{thm:smooth-disc-sym-low-dimension}.
Suppose that $X$ is a smooth closed, connected $4$-manifold. Suppose also, for the time
being, that $X$ is orientable, and choose an
orientation of $X$. Suppose that $r_i\to\infty$ is a sequence of integers
$m$ is a natural number such that $G_i:=(\ZZ/r_i)^m$ acts smoothly and effectively
on $X$ for every $i$. Without loss of generality we assume that $G_i$ acts on $X$
preserving the orientation (apply \cite[Lemma 2.1]{Mundet2021} to the kernel of
$G_i\to\Aut(H^4(X;\ZZ))$). Also, as in the proof of Theorem \ref{thm:toric-finite-subgroup},
we may assume that either each $r_i$ is
a prime, or there exists some prime $p$ such that each $r_i$ is a power of $p$.

Passing to a subsequence if necessary, we can assume that
either the action of $(\ZZ/r_i)^m$ is free for every $i$, or that it has
nontrivial stabilizers for every $i$. In the first case, Theorem \ref{thm:abelian-free-actions}
implies that $m\leq 4$, and if $m=4$ then Theorem \ref{thm:abelian-free-actions-tori}
allows to conclude the proof of Theorem \ref{thm:smooth-disc-sym-low-dimension}.

Now suppose that for each $i$ there is some nontrivial subgroup $\Gamma_i\leq G_i$
such that $X^{\Gamma_i}\neq\emptyset$. We will see that in this case we necessarily have $m\leq 3$.
Choose for each $i$ a connected component $Y_i$ of
$X^{\Gamma_i}$. By Smith theory (see \cite[Chap III, Theorem 4.3]{Bo}), $|\pi_0(X^{\Gamma_i})|$
is bounded above by a constant depending only on $X$ (recall that $r_i$ is
a power of a prime), and hence using again \cite[Lemma 2.1]{Mundet2021} we may assume
that the action of $G_i$ on $X$ preserves $Y_i$ for each $i$.
Since $G_i$ acts on $X$ preserving the orientation,
$Y_i$ has even codimension in $X$, and hence it is $0$ or $2$-dimensional.
(If $p$ is odd then $Y_i$ has even codimension regardless of whether the action is orientation
preserving. It's only in the case $p=2$ that orientability plays a role.)
Passing to a subsequence we may assume that $\dim Y_i$ is independent of $i$.
If $\dim Y_i=0$ then $Y_i$ is a fixed point of $G_i$, so Lemma \ref{lemma:linearising-smooth}
gives an embedding
$G_i\hookrightarrow\GL(4,\RR)$, which implies that $m\leq 2$.
Suppose now that $\dim Y_i=2$ for every $i$. By \cite[Chap III, Theorem 4.3]{Bo}
and the theorem on classification of closed connected surfaces, the genus of $Y_i$ is bounded
uniformly. Hence we may apply Lemma \ref{lemma:restriction-to-surface} to the morphisms
$\phi_i:G_i\to\Diff(Y_i)$ with a constant $C$ independent of $i$. We distinguish two cases.

Suppose that each $r_i$ is prime. Then, by item (1) in Lemma \ref{lemma:restriction-to-surface},
the kernel of $\phi_i$ contains a subgroup $H_i\simeq(\ZZ/r_i)^{m-2}$. Let $y_i\in Y_i$ be
any point. Since $H_i$ is contained in the kernel of $\phi_i$, all elements of $H_i$ fix
$y_i$, and consequently by Lemma \ref{lemma:linearising-smooth} there is an
embedding\footnote{Lemma \ref{lemma:linearising-smooth} states that
the map $\lambda_i:H_i\hookrightarrow\GL(T_{y_i}X)$ given by linearising the action is injective;
but the image of $\lambda_i$ is contained in the group $\GL(T_{y_i}X,T_{y_i}Y_i)$
of automorphisms of $T_{y_i}X$
acting trivially on $T_{y_i}Y_i\subset T_{y_i}X$. Since $H_i$ is finite, composing
$H_i\hookrightarrow \GL(T_{y_i}X,T_{y_i}Y_i)$ with the projection $\GL(T_{y_i}X,T_{y_i}Y_i)\to\GL(T_{y_i}X/T_{y_i}Y_i)\simeq\GL(2,\RR)$
is again injective.} $H_i\hookrightarrow\GL(2,\RR)$, which implies that $m-2\leq 1$.

The other case is that in which each $r_i$ is of the form $p^{e_i}$ for some prime $p$ independent
of $i$. This is dealt with as in the previous case using item (2) of Lemma \ref{lemma:restriction-to-surface},
and again the conclusion is that $m-2\leq 1$.

To conclude the proof in dimension 4, we consider the case in which $X$ is a smooth, closed, connected and non-orientable $4$-manifold.
Let $\pi:Y\to X$ be the orientation covering of $X$. Then $Y$ is a smooth, closed, connected and orientable $4$-manifold,
so the previous arguments, combined with \cite[Theorem 1.12]{Mundet2021} (adapted to smooth actions), imply that
$\discsym_{\smooth}(X)\leq 4$. Furthermore, if $\discsym_{\smooth}(X)=4$, then $\discsym_{\smooth}(Y)=4$, so
$H^*(Y;\ZZ)\simeq H^*(T^4;\ZZ)$ as rings. We are going to see that the assumption $\discsym_{\smooth}(X)=4$ leads
to a contradiction; hence, if $X$ is non-orientable we have $\discsym_{\smooth}(X)\leq 3$.

Assume that $\discsym_{\smooth}(X)=4$, so that there exist integers
$r_i\to\infty$ and a smooth effective action of
$(\ZZ/r_i)^4$ on $X$ for each $i$. As we said, in this case $H^*(Y;\ZZ)\simeq H^*(T^4;\ZZ)$,
so $Y$ is rationally hypertoral and $H^*(Y;\QQ)\simeq\Lambda^*H^1(Y;\QQ)$.

Let $\sigma:Y\to Y$ be the orientation reversing involution satisfying $\pi\circ\sigma=\pi$.
The morphism $\pi^*:H^*(X;\QQ)\to H^*(Y;\QQ)$ is injective and its image can be identified
with the subspace of $\sigma$-invariant classes in $H^*(Y;\QQ)$. Hence $\sigma$ cannot act
trivially on $H^*(Y;\QQ)$, because that would imply that $H^4(X;\QQ)\simeq\QQ$, which is
impossible because $X$ is non-orientable. Since $H^*(Y;\QQ)\simeq\Lambda^*H^1(Y;\QQ)$
it follows that $\sigma$ acts nontrivially on $H^1(Y;\QQ)$, and hence also on $H^1(Y;\ZZ)$.

Arguing as in the proof of \cite[Lemma 7.1]{Mundet2021} and using the assumption that
$(\ZZ/r_i)^4$ acts smoothly and effectively on $X$ for each $i$, we conclude that there are integers
$s_i\to\infty$ and, for each $i$, a smooth and effective action of $(\ZZ/s_i)^4$ on $Y$ that commutes with
the involution $\sigma$. Applying Lemma \ref{lemma:non-trivial-involution} and
Theorems \ref{thm:Minkowski} and \ref{thm:rotation-hypertoral} to the actions
of $(\ZZ/s_i)^4$ on $Y$ we reach a contradiction, so
$\discsym_{\smooth}(X)\leq 3$ if $X$ is non-orientable.
This finishes the proof of the theorem in the 4-dimensional case.

The proof for 3-dimensional manifolds follows the same scheme, but the details are simpler.
The same argument as before reduces the proof to the case of orientation preserving
actions of $(\ZZ/r)^m$ on orientable $3$-manifolds. Orientability implies that the
fixed point set of a finite group action is a disjoint union of a number of copies
of the circle, and the number of copies is bounded above by a constant depending
only on the manifold, by Smith theory. Hence one needs to use the analogue of
Lemma \ref{lemma:restriction-to-surface} where the surface is replaced by the circle.

Alternatively, if $X$ is a smooth, closed and connected $3$-manifold and $(\ZZ/r)^m$
acts smoothly and effectively on $X$ then $(\ZZ/r)^{m+1}$ acts smoothly and effectively
on $Z:=X\times S^1$, where the $(m+1)$-th factor $\ZZ/r$ acts by rotations on the
$S^1$ factor. Applying the $4$-dimensional case of the theorem to $Z$ we deduce the
proof of the theorem for $X$.

\subsection{Proof of Theorem \ref{thm:disc-sym-low-dimension}}
\label{ss:thm:disc-sym-low-dimension}

As in the proof of Theorem \ref{thm:smooth-disc-sym-low-dimension},
the case $\dim X=1$ is elementary, and for the case $\dim X=2$ the comments in item (2) of
Example \ref{ex:computation-discsym} give the result.

Hence we only need to consider the $3$-dimensional case. Assume that
$X$ is a closed topological manifold of dimension $3$. By Moise's theorem \cite{Moise}
(see also \cite{Bing2}), $X$ has a unique smooth structure (see also \cite[Section 3.10]{Thurston}).
By a recent result of Pardon \cite{Pardon2019}
any finite group acting effectively and topologically on $X$ admits effective smooth actions on $X$ (although not
every topological action is conjugate to a smooth action, as illustrated by the famous example due to Bing \cite{Bing}).
Consequently, $\discsym_{\smooth}(X)=\discsym(X)$. So Theorem \ref{thm:smooth-disc-sym-low-dimension} implies
that $\discsym(X)\leq 3$, and that if $\discsym(X)=3$ then $H^*(X;\ZZ)\simeq H^*(T^3;\ZZ)$.

We next prove that if $\discsym(X)=3$ then $X$ is homeomorphic to $T^3$.
By the previous arguments it suffices to prove that if $X$ is a smooth closed $3$-manifold such
that $H^*(X;\ZZ)\simeq H^*(T^3;\ZZ)$ and $\discsym_{\smooth}(X)=3$ then $X$ is diffeomorphic to $T^3$.
By the arguments in \cite[\S 2]{Zimmermann2014}, the fact that $X$ supports smooth effective
actions of arbitrarily large finite groups implies that $X$ supports an effective action of $S^1$.
By Lemma \ref{lemma:free-actions-3-manifold} such action is free, so $X$ is the total space of a
circle bundle on a closed surface $Y=X/S^1$. The surface $Y$ is connected and orientable because $X$ is, as $H^0(X;\ZZ)\simeq H^3(X;\ZZ)\simeq \ZZ$. Consider the following portion of Gysin's exact sequence
of the circle bundle $X\to Y$:
$$0\to H^1(Y;\ZZ)\to H^1(X;\ZZ)\to H^0(Y;\ZZ)\stackrel{\smallsmile e}{\longrightarrow} H^2(Y;\ZZ) $$
where $e\in H^2(Y;\ZZ)$ is the Euler class. Since $H^0(Y;\ZZ)\simeq\ZZ\simeq H^2(Y;\ZZ)$,
if $e\neq 0$ then $H^1(Y;\ZZ)\simeq H^1(X;\ZZ)\simeq\ZZ^3$,
which is impossible for a closed connected and orientable surface $Y$. Hence $e=0$, so $X\cong Y\times S^1$. Again the previous sequence implies that $H^1(Y;\ZZ)\simeq\ZZ^2$, so $Y$ is a $2$-torus. This implies that $X\cong T^3$.

\subsection{Proof of Theorem \ref{thm:nilpotent-generators}}
\label{s:proof-thm:nilpotent-generators}

We will use the following notation. If $G$ is a group, $\Aut(G)$ denotes the group of automorphisms
of $G$. If $G'\leq G$ is an inclusion of groups, $\Aut(G,G')$ denotes
the group of automorphisms $\phi\in\Aut(G)$ such that $\phi(G')=G'$.
If $G,H$ are groups, $\Mor(G,H)$ denotes the set of all group morphisms $G\to H$.
If $H$ is a subgroup of $G$, $N_G(H)$ denotes the normalizer of $H$ in $G$.

\begin{lemma}
\label{lemma:index-automorfismes}
Let $H\leq G$ be an inclusion of finite groups. We have
$$[\Aut(G):\Aut(G,H)]\leq ([G:H]!)^{d(H)+[G:H]},$$
and if $H$ is normal then we have
$[\Aut(G):\Aut(G,H)]\leq [G:H]^{d(H)+[G:H]}$.
\end{lemma}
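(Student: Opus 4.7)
The plan is to read the index as the size of the $\Aut(G)$-orbit of $H$: by orbit-stabilizer, $[\Aut(G):\Aut(G,H)]$ is exactly the number of distinct subgroups of $G$ of the form $\phi(H)$ with $\phi\in\Aut(G)$. Since each such $\phi(H)$ has index $n:=[G:H]$, it suffices to bound the total number of index-$n$ subgroups of $G$ (respectively, in the normal case, the number of normal subgroups $N\triangleleft G$ with $G/N\cong G/H$). The key preliminary fact is $d(G)\leq d(H)+[G:H]$, which holds because $d(H)$ generators of $H$ together with $[G:H]$ coset representatives of $G/H$ generate $G$.

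For the first inequality I would use the standard parametrisation of index-$n$ subgroups by transitive pointed $G$-actions on $\{1,\dots,n\}$: each such $H'\leq G$ corresponds, via a bijection $\sigma:G/H'\to\{1,\dots,n\}$, to a pair $(\rho,i_0)\in\Mor(G,S_n)\times\{1,\dots,n\}$ satisfying $\rho^{-1}(\Stab_{S_n}(i_0))=H'$, with distinct $\sigma$'s giving distinct pairs and every pair whose preimage stabilizer has index exactly $n$ arising this way (the corresponding $\rho$ is automatically transitive, as the orbit of $i_0$ then has size $n$). Each $H'$ thus contributes $n!$ pairs, while the total number of pairs is bounded by $n\cdot|\Mor(G,S_n)|\leq n\cdot(n!)^{d(G)}$, yielding at most $n\cdot(n!)^{d(G)-1}\leq(n!)^{d(G)}\leq(n!)^{d(H)+[G:H]}$ index-$n$ subgroups after dividing by $n!$, using $n\leq n!$ and the preliminary bound on $d(G)$.

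For the normal case, observe that if $H\triangleleft G$ and $\phi\in\Aut(G)$ then $\phi(H)\triangleleft G$ and $\phi$ descends to an isomorphism $G/H\to G/\phi(H)$. Setting $Q:=G/H$, the orbit of $H$ sits inside $\{N\triangleleft G:G/N\cong Q\}$, and each such $N$ is the kernel of some surjection $G\to Q$. Two surjections share a kernel iff they differ by post-composition with an automorphism of $Q$, so this collection has cardinality at most $|\Mor(G,Q)|/|\Aut(Q)|\leq|\Mor(G,Q)|\leq|Q|^{d(G)}\leq n^{d(H)+[G:H]}$, as desired.

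I do not expect any genuine obstacle here; the only points requiring real care are the verification that distinct bijections $\sigma$ indeed yield distinct pairs $(\rho,i_0)$ (which reduces to the fact that the centralizer in $S_n$ of a transitive subgroup acts freely on $\{1,\dots,n\}$, so a coincidence $(\tau\rho\tau^{-1},\tau(i_0))=(\rho,i_0)$ forces $\tau=\mathrm{id}$), and the one-line bound $d(G)\leq d(H)+[G:H]$, which is elementary but drives the exponent in both final estimates.
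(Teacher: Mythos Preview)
Your proof is correct. Both you and the paper start from the same two elementary facts --- the bound $d(G)\leq d(H)+[G:H]$ and the estimate $|\Mor(G,Q)|\leq |Q|^{d(G)}$ --- but you organize the argument differently. You invoke orbit--stabilizer to identify $[\Aut(G):\Aut(G,H)]$ with the size of the $\Aut(G)$-orbit of $H$, and then bound that orbit by counting all index-$n$ subgroups (via pointed transitive actions on $\{1,\dots,n\}$) or, in the normal case, all normal subgroups with quotient isomorphic to $G/H$ (via surjections modulo $\Aut(G/H)$). The paper instead passes to the normal core $K$ of $H$, considers the map $\Aut(G)\to\Mor(G,G/K)$ given by $\phi\mapsto\pi\circ\phi$, and shows directly that each fiber of this map lies in a single coset of some conjugate $\Aut(G,\phi^{-1}(H))$; a pigeonhole then gives the bound. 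Your route is more modular and recovers the classical Hall-type count of index-$n$ subgroups along the way; the paper's route is a single self-contained computation that avoids the orbit--stabilizer framing and the careful verification about centralizers of transitive actions. Either way the exponent $d(H)+[G:H]$ and the base $[G:H]!$ (or $[G:H]$ in the normal case) arise for the same reason.
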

\begin{proof}
If $H$ is normal, define $K:=H$. Otherwise,
let $K$ be the kernel of the morphism $G\to\Perm(G/H)$
given by left multiplication of $G$ on $G/H$, where $\Perm(S)$ denotes the
group of permutations of the set $S$. In both cases,
$K$ is a normal subgroup of $G$, and if $H$ is not normal
then $[G:K]\leq [G:H]!$.

Let $\sigma:\Aut(G)\to\Mor(G,G/K)$ be the map sending $\phi\in\Aut(G)$
to $\pi\circ\phi$, where $\pi:G\to G/K$ is the projection.
We have $d(G)\leq d(H)+[G:H]$. A morphism $G\to G/K$ is uniquely
determined by the images of the elements in a generating set of $G$,
so $|\Mor(G,G/K)|\leq L:=|G/K|^{d(H)+[G:H]}$.
Hence, there is some $\phi\in\Aut(G)$ such that $|\sigma^{-1}(\sigma(\phi))|\geq |\Aut(G)|/L$.
Let $\psi\in \sigma^{-1}(\sigma(\phi))$, and write
$\psi=\phi\circ\xi$ for some $\xi\in\Aut(G)$. Let $h:G\to G$ be the map defined
by $h(g)=\xi(g)g^{-1}$. We have $\sigma(\psi)=\sigma(\phi)$, so
$\phi(\xi(g)g^{-1})=\phi(\xi(g))\phi(g^{-1})=\psi(g)\phi(g)^{-1}\in K$ for every $g\in G$,
or equivalently $\xi(g)g^{-1}\in\phi^{-1}(K)$ for every $g$. In particular, if
$g\in\phi^{-1}(H)$ then $\xi(g)\in g\phi^{-1}(K)\subseteq \phi^{-1}(H)$,
so $\xi(\phi^{-1}(H))=\phi^{-1}(H)$.
Hence the image of the injective map
$$\sigma^{-1}(\sigma(\phi))\ni\psi\mapsto \phi^{-1}\circ\psi\in\Aut(G)$$
is contained in $\Aut(G,\phi^{-1}(H))$, so $|\Aut(G,\phi^{-1}(H))|\geq |\sigma^{-1}(\sigma(\phi))|
\geq |\Aut(G)|/L$.
But $\Aut(G,H)\ni\theta\mapsto \phi^{-1}\circ\theta\circ\phi\in\Aut(G,\phi^{-1}(H))$ is a bijection,
so the lemma follows.
\end{proof}

Fix natural numbers $k,C$.
We claim that there is a constant $\Lambda$ such that any finite $p$-group $P\in\nN_{k,C}$
has a subgroup $P'\leq P$ satisfying $[P:P']\leq \Lambda$ and $d(P')\leq k(5k+1)/2$.

Suppose that $P\in\nN_{k,C}$ is a finite $p$-group. Let $A$ be a maximal
abelian normal subgroup of $P$. There is a subgroup $B\leq A$ satisfying $d(B)\leq k$
and $[A:B]\leq C$. By Lemma \ref{lemma:index-automorfismes},
$[\Aut(A):\Aut(A,B)]\leq C^{k+C}$.
Let $\rho:\Aut(A,B)\to\Aut(B)$ be the restriction map.
The kernel of the natural morphism
$\eta:\Ker\rho\to\Aut(A/B)$ is equal to
$\{\Id_A+\psi\circ\pi\mid \psi\in\Mor(A/B,B)\}$, where
$\pi:A\to A/B$ is the projection (recall that we use additive notation on abelian groups).
The map $\Id_A+\psi\circ\pi\mapsto \psi$
gives an isomorphism of groups $\Ker\eta\simeq\Mor(A/B,B)$, where
the group structure on $\Mor(A/B,B)$
is inherited by the group structure on $B$.
Let $e\in\ZZ$ satisfy $p^e\leq C<p^{e+1}$. Since
$A$ is a $p$-group, $|A/B|\leq p^e$. The $p^e$-torsion $B[p^e]\leq B$
satisfies $|B[p^e]|\leq p^{ed(B)}\leq p^{ek}\leq C^k$.
Since $\Mor(A/B,B)=\Mor(A/B,B[p^e])$, we have
$|\Mor(A/B,B)|\leq (C^k)^C=C^{kC}.$
Hence:
$$|\Ker\rho|\leq |\Aut(A/B)|\cdot |\Mor(A/B,B)|\leq C!C^{kC}.$$
The action of $P$ by conjugation on itself induces a morphism $\zeta:P\to \Aut(A)$
whose kernel is equal to $A\leq P$ (see e.g. \cite[\S 5.2.3]{Robinson}).
Let $P_0=\zeta^{-1}(\Aut(A,B))$. Then
$$[P:P_0]\leq [\Aut A:\Aut(A,B)]\leq C^{k+C}.$$
Since $d(B)\leq k$, the Gorchakov--Hall--Merzlyakov--Roseblade lemma (see e.g. \cite[Lemma 5]{Roseblade}) implies that the subgroup $\rho(\zeta(P_0))\leq\Aut(B)$ satisfies
$d(\rho(\zeta(P_0)))\leq k(5k-1)/2$. Hence we may pick up elements $g_1,\dots,g_r\in P_0$, with $r\leq k(5k-1)/2$, such that $\rho(\zeta(g_1)),\dots,\rho(\zeta(g_r))$ generate $\rho(\zeta(P_0))$.
Let $P'\leq P_0$ be the subgroup generated by the elements $g_1,\dots,g_r\in P_0$ and by $B$. Clearly $d(P')\leq k+k(5k-1)/2=k(5k+1)/2$.

We now bound $[P:P']$. From the exact sequence
$$1\to \zeta(P_0)\cap\Ker\rho\to \zeta(P_0)\to \rho(\zeta(P_0))\to 1$$
we conclude that
$$|\zeta(P_0)|\leq |\zeta(P_0)\cap\Ker\rho|\cdot |\rho(\zeta(P_0))|
\leq |\Ker\rho|\cdot |\rho(\zeta(P_0))|\leq C!C^{kC}|\rho(\zeta(P_0))|.$$
Since $\rho:\zeta(P')\to \rho(\zeta(P_0))$ is surjective, we have $|\zeta(P')|\geq |\rho(\zeta(P_0))|$.
The two estimates imply $[\zeta(P_0):\zeta(P')]\leq C!C^{kC}$.
We have $\Ker \zeta\cap P_0=A$ and $\Ker \zeta\cap P'=B$, so $[P_0:P']=[A:B][\zeta(P_0):\zeta(P')]\leq C\cdot C!C^{kC}$.
Combining this with our estimate on $[P:P_0]$ we obtain
$$[P:P']=[P:P_0]\cdot [P_0:P']\leq \Lambda:=C^{k+C}\cdot C\cdot C!C^{kC}=C^{(k+1)(C+1)}C!.$$
This finishes the proof of the claim.

Now let $N\in\nN_{k,C}$ an arbitrary group.
Let $p_1<\dots<p_s$ be the primes dividing $|N|$, and for each $i$
let $P_i\leq N$ be a Sylow $p_i$-subgroup of $N$. By \cite[Theorem 1.26]{Isaacs}
the multiplication map $\mu:P_1\times\dots\times P_s\to N$,
$\mu(p_1,\dots,p_s)=p_1\dots p_s$, is an isomorphism of groups.

By the claim, for each $i$ there is a subgroup $P_i'\leq P_i$ satisfying
$[P_i:P_i']\leq \Lambda$ and $d(P_i')\leq \delta:=k(5k+1)/2$. Let
$N'=\mu(P_1'\times\dots\times P_s')$.
We claim that $d(N)\leq \delta$. To prove the claim choose, for each $1\leq i\leq s$,
elements $e_{i1},\dots,e_{i\delta}\in P_i'$ generating $P_i'$ (some repetitions might
be necessary).
Let $e_j:=e_{1j}\dots e_{sj}$. Since the elements $e_{1j},\dots,e_{sj}$ pairwise commute
and have pairwise coprime order, the Chinese remainder theorem implies that for each $i,j$
there is some power of $e_j$ which is equal to $e_{ij}$. This proves that
$e_1,\dots,e_{\delta}$ generates $N'$, so the claim is proved.

If $p_i>\Lambda$ then the condition $[P_i:P_i']\leq\Lambda$ implies that $P_i'=P_i$. Since
the number of primes in $\{1,\dots,\Lambda\}$ is at most $\Lambda$, it follows that
$[N:N']=\prod_i [P_i:P_i']\leq \Lambda^{\Lambda}$.

\section*{Acknowledgments}
It is for me a pleasure to dedicate this paper to Oscar Garc\'{\i}a--Prada
on the occasion of this 60th birthday. I was very privileged to be Oscar's
PhD student twenty five years ago, and I can't overstate how many things I have
learned from him since that time, both in mathematics and beyond.

Many thanks to Constantin Shramov, Endre Szab\'o and Alexandre Turull for
helpful comments and corrections, and also to the referee for a very detailed
report with many suggestions that helped a lot to improve the text.

This research was partially supported by the grant
PID2019-104047GB-I00 from the Spanish Ministeri de Ci\`encia i Innovaci\'o.

\end{document}